\documentclass[12pt]{article}
\usepackage{amsmath,amsthm}
\usepackage{amssymb}
\textwidth=160mm
\hoffset=-15mm
\tolerance=500
\newtheorem{theorem}{Theorem}[section]
\newtheorem{proposition}[theorem]{Proposition}
\newtheorem{lemma}[theorem]{Lemma}
\newtheorem{corollary}[theorem]{Corollary}
\newtheorem{conjecture}[theorem]{Conjecture}
\newenvironment{remark}{\noindent{\bf Remark.}}{}
\newenvironment{remarks}{\noindent{\bf Remarks.}}{}
\newenvironment{example}{\noindent{\bf Example.}}{}
\def\nolineskip{\baselineskip=0pt \lineskip=0pt}
\def\orb{\mathop{\hbox{\rm Orb}}\nolimits}
\def\corb{\mathop{\overline{\hbox{\rm Orb}}}\nolimits}
\def\lfa#1#2{\mathord{\hbox{\vtop{\kern-6pt\hbox{$#2$}}$\setminus$$#1$}}}
\def\dvd{\mathrel{
\vcenter{\nolineskip\hbox{.}\kern2pt\hbox{.}\kern2pt\hbox{.}}}}
\def\notdvd{\mathrel{\hbox to 0pt{\kern1pt
$\vcenter{\nolineskip\hbox{.}\kern2pt\hbox{.}\kern2pt\hbox{.}}$\hss}/}}
\def\rest#1{\raise-2pt\hbox{$|_{#1}$}}
\def\Rspan{\mathop{\hbox{\rm span}_{\R}}}
\def\Zspan{\mathop{\hbox{\rm span}_{\Z}}}
\def\Clim{\mathop{\hbox{\rm UC-lim}}}
\let\scr=\scriptstyle
\let\txt=\textstyle

\let\ras=\rightarrow
\let\ra=\longrightarrow
\let\sle=\subseteq
\let\sln=\subset
\let\sge=\supseteq
\let\overline=\overline
\let\tens=\otimes
\let\sm=\setminus
\let\col=\colon
\let\ld=\ldots
\let\cd=\cdot
\let\eps=\varepsilon

\def\comp{\mathord{\scr\circ}}
\def\nasp{\spacefactor=1000}
\def\smatr#1{{\baselineskip=2pt\lineskip=2pt
\left(\vcenter{\let\\=\cr\let\-=\hfill
\ialign{\hfil$\scr##$\hfil&&\hfil\kern2pt$\scr##$\hfil\cr#1\crcr}}\right)}}
\def\svect#1{\smatr{#1}}
\def\svd{\vbox to 2.4mm{\nolineskip
\kern.3pt\hbox{.}\vfil\hbox{.}\vfil\hbox{.}\kern.3pt}}
\def\vd{\vbox to 3.2mm{\nolineskip
\kern1pt\hbox{.}\vfil\hbox{.}\vfil\hbox{.}\kern1pt}}
\def\N{{\mathbb N}}
\def\Z{{\mathbb Z}}
\def\Q{{\mathbb Q}}
\def\R{{\mathbb R}}
\def\T{M}
\let\alf=\alpha
\let\gam=\gamma
\let\lam=\lambda
\let\del=\delta
\let\Del=\Delta
\let\Gam=\Gamma
\let\sig=\sigma
\let\lam=\lambda
\let\Lam=\Lambda
\def\tmu{\widetilde{\mu}}
\def\B{{\cal B}}
\def\bx{\bar{x}}
\def\Gc{G^{o}}
\def\Xc{X^{o}}
\def\hG{\hat{G}}

\def\hq{\hat{q}}

\def\tX{\widetilde{X}}
\def\tT{\widetilde{T}}
\def\tS{\widetilde{S}}
\begin{document}
\title{Intersective polynomials\\
and polynomial Szemer\'{e}di theorem}
\author{V. Bergelson$^{*}$, A. Leibman\thanks{The first two authors were supported 
by NSF grant DMS-0600042}\ ,
and E. Lesigne}
\maketitle
\begin{abstract}
Let $P=\{p_{1},\ld,p_{r}\}\subset\Q[n_{1},\ld,n_{m}]$ 
be a family of polynomials such that $p_{i}(\Z^{m})\sle\Z$, $i=1,\ld,r$.
We say that the family $P$ has {\it PSZ property\/}
if for any set $E\sle\Z$ with 
$d^{*}(E)=\limsup_{N-M\ras\infty}\frac{|E\cap[M,N-1]|}{N-M}>0$
there exist infinitely many $n\in\Z^{m}$
such that $E$ contains a polynomial progression of the form
\hbox{$\{a,a+p_{1}(n),\ld,a+p_{r}(n)\}$}.
We prove that a polynomial family $P=\{p_{1},\ld,p_{r}\}$ has PSZ property
if and only if the polynomials $p_{1},\ld,p_{r}$ are {\it jointly intersective\/},
meaning that for any $k\in\N$ there exists $n\in\Z^{m}$
such that the integers $p_{1}(n),\ld,p_{r}(n)$ are all divisible by $k$.
To obtain this result 
we give a new ergodic proof of the polynomial Szemer\'{e}di theorem,
based on the fact that the key to the phenomenon of polynomial multiple recurrence
lies with the dynamical systems defined by translations on nilmanifolds.
We also obtain, as a corollary, the following generalization 
of the polynomial van der Waerden theorem:
If $p_{1},\ld,p_{r}\in\Q[n]$ are jointly intersective integral polynomials,
then for any finite partition of\/ $\Z$, $\Z=\bigcup_{i=1}^{k}E_{i}$,
there exist $i\in\{1,\ld,k\}$ and $a,n\in E_{i}$
such that $\{a,a+p_{1}(n),\ld,a+p_{r}(n)\}\sln E_{i}$.
\end{abstract}
\section{Introduction}
\label{S-Intro}
Let us call a polynomial $p\in\Q[n]$ {\it integral\/} 
if it takes on integer values on the integers.
The polynomial Szemer\'{e}di theorem (\cite{psz})
states that if a set $E\sle\Z$ has positive upper Banach density,
$d^{*}(E)=\limsup_{N-M\ras\infty}\frac{|E\cap[M,N-1]|}{N-M}>0$,
then for any finite family of integral polynomials $P=\{p_{1},\ld,p_{r}\}$
with $p_{i}(0)=0$, $i=1,\ld,r$,
one can find an arbitrarily large $n\in\N$ such that, for some $a\in E$,
$\{a,a+p_{1}(n),\ld,a+p_{r}(n)\}\sln E$.
Moreover, the set 
$$
N_{P}(E)=\Bigl\{n\in\Z:\hbox{for some $a$, 
$\{a,a+p_{1}(n),\ld,a+p_{r}(n)\}\sln E$}\Bigr\}
$$
is syndetic, that is, $N_{P}(E)$ has a nontrivial intersection
with any long enough interval in $\Z$
(see \cite{BM1}).
The polynomial Szemer\'{e}di theorem
is an extension of Szemer\'{e}di's theorem on arithmetic progressions,
which corresponds to $p_{i}(n)=in$, $i=1,\ld,r$,
(see \cite{Sz} and \cite{Fo})
and of the S\`{a}rk\"{o}zy--Furstenberg theorem,
which corresponds to the case $r=1$
(see \cite{Sar}, \cite{Fo}, \cite{Fb}).

It is not hard to see that the condition of homogeneity, 
$p_{i}(0)=0$, $i=1,\ld,r$, in the polynomial Szemer\'{e}di theorem
is not superfluous.
(Consider, for example, $r=1$, $p(n)=2n+1$, $E=2\N$, 
or $r=1$, $p(n)=n^{2}+1$, $E=3\N$.)
On the other hand, it is also clear that this condition 
is not a necessary one.
For example, it is easy to see 
that it can be replaced by the condition $p_{i}(n_{0})=0$, $i=1,\ld,r$,
for some $n_{0}\in\Z$.
Actually, the latter condition still falls short of being necessary.
Let us say that a family of integral polynomials $P=\{p_{1},\ld,p_{r}\}$
has PSZ property if for every set $E\sle\Z$ with $d^{*}(E)>0$
the introduced above set $N_{P}(E)$ is nonempty,
and let us say that $P$ has SPSZ property 
if for every set $E\sle\Z$ with $d^{*}(E)>0$
the set $N_{P}(E)$ is syndetic.
Our goal in this paper
is to establish necessary and sufficient conditions
for a family of integral polynomials to have PSZ property.
When $r=1$, such a condition was obtained in \cite{KM}.
Namely, it was proved in \cite{KM}
that a family consisting of a single integral polynomial $p$ has PSZ property
if and only if $p$ is {\it intersective\/},
meaning that for any $k\in\N$
the intersection $\{p(n),\ n\in\Z\}\cap k\Z$ is nonempty.

As we will see, 
our condition for a family $P$ to have PSZ property
is a natural generalization of the Kamae and Mend\`{e}s-France condition.
We will say that polynomials $p_{1},\ld,p_{r}$ are {\it jointly intersective\/}
if for every $k\in\N$ there exists $n\in\Z$
such that $p_{i}(n)$ is divisible by $k$ for all $i=1,\ld,r$.
Here is now the formulation of our main result.
\begin{theorem}
\label{Th}
Let $P=\{p_{1},\ld,p_{r}\}$ be a system of integral polynomials.
The following statements are equivalent:\\
{\rm(i)} $P$ has PSZ property;\\
{\rm(ii)} $P$ has SPSZ property;\\
{\rm(iii)} the polynomials $p_{1},\ld,p_{r}$ are jointly intersective.
\end{theorem}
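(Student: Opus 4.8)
The plan is to establish the cycle (ii)$\Rightarrow$(i)$\Rightarrow$(iii)$\Rightarrow$(ii). The implication (ii)$\Rightarrow$(i) is trivial, since a syndetic set is in particular nonempty. The implication (i)$\Rightarrow$(iii) is the ``necessity'' direction and should follow by adapting the one-polynomial argument from \cite{KM}: if $p_1,\ld,p_r$ fail to be jointly intersective, there is a $k\in\N$ with no $n$ such that $k\mid p_i(n)$ for all $i$. One then cooks up a set $E$ of positive density inside a union of residue classes mod $k$ (or mod some modulus built from $k$) on which no configuration $\{a,a+p_1(n),\ld,a+p_r(n)\}$ can lie, by a pigeonhole/counting argument on residues: for every $n$, some $p_i(n)$ is nonzero mod $k$, and one chooses $E$ so as to avoid all the finitely many ``bad'' shift patterns. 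This is elementary but requires care because, unlike the single-polynomial case, one must simultaneously defeat all $r$ polynomials; the construction of $E$ should proceed by examining the (finite) set of residue vectors $(p_1(n),\ld,p_r(n))\bmod k$ that actually occur and choosing a residue class for $a$ that is disjoint from all of its translates.

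The heart of the matter is (iii)$\Rightarrow$(ii): joint intersectivity implies the strong (syndetic) PSZ property. Here the strategy is a reduction to the already-known polynomial Szemer\'edi theorem (in its SPSZ form, from \cite{BM1}) applied to a cleverly chosen auxiliary family of \emph{homogeneous} polynomials. The key observation is that joint intersectivity is equivalent to a ``simultaneous divisibility'' statement that, via a classical argument, lets one find a single integer $b$ (or work along a sub-progression $b+c\Z$) such that the shifted polynomials $q_i(n):=p_i(b+cn)-p_i(b)$ are again integral, while the constants $p_i(b)$ are all divisible by a prescribed modulus. More precisely: given a set $E$ with $d^*(E)>0$, one passes to the densest residue class, say $E\cap(t+w\Z)$ still has positive density in $t+w\Z\cong\Z$; by joint intersectivity choose $n_0$ with $w\mid p_i(n_0)$ for all $i$, and replace $n$ by $n_0+wn$. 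Then $p_i(n_0+wn)=p_i(n_0)+(\text{terms divisible by }w)$, so modulo $w$ each $p_i$ is constant. Writing $p_i(n_0+wn)=p_i(n_0)+w\,\widetilde{p}_i(n)$ with $\widetilde{p}_i$ integral and \emph{homogeneous} (i.e.\ $\widetilde p_i(0)=0$), a configuration $\{a',a'+\widetilde p_1(n),\ld,a'+\widetilde p_r(n)\}$ inside $\frac1w(E\cap(t+w\Z)-t)$ pulls back to a genuine configuration $\{a,a+p_1(n_0+wn),\ld\}$ inside $E$. Since $\widetilde p_1,\ld,\widetilde p_r$ are homogeneous integral polynomials, the classical polynomial Szemer\'edi theorem applies and yields a syndetic set of good $n$'s; unwinding the affine substitution $n\mapsto n_0+wn$ shows $N_P(E)$ is syndetic as well (an affine image of a syndetic set is syndetic along the relevant progression, which suffices after a further elementary argument combining finitely many residue classes).

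The main obstacle I anticipate is making the reduction in (iii)$\Rightarrow$(ii) genuinely \emph{uniform} and \emph{syndetic}: the naive argument above produces good $n$ only in a single progression $n_0+w\Z$, and a priori the translate $n_0$ and modulus $w$ depend on $E$ through the choice of densest residue class, so one does not immediately get syndeticity of $N_P(E)$ in $\Z$ itself. Handling this cleanly is presumably why the authors invoke the machinery announced in the abstract — a fresh ergodic-theoretic proof of the polynomial Szemer\'edi theorem in which the nilfactor carries all the recurrence — rather than a black-box reduction: one wants a characteristic-factor statement robust enough that the ``divisibility along a subprogression'' can be absorbed directly into the structured (nilsystem) part of the system, giving the syndetic conclusion in one stroke via a Furstenberg correspondence argument. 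I would therefore expect the actual proof to set up the correspondence principle, isolate the nilfactor as the characteristic factor for the averages $\frac1N\sum_n \prod_i T^{p_i(n)}f$, and then verify the recurrence on nilsystems using the joint intersectivity hypothesis to control the relevant equidistribution on the nilmanifold — the equidistribution/recurrence-on-nilmanifolds step being the technically heaviest part.
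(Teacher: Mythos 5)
The implications (ii)$\Rightarrow$(i) and (i)$\Rightarrow$(iii) are fine, though your pigeonhole construction for the latter is unnecessary: the paper simply takes $E=k\Z$, since $\{a,a+p_{1}(n),\ld,a+p_{r}(n)\}\subseteq k\Z$ forces $k$ to divide every $p_{i}(n)$. The genuine gap is in your main argument for (iii)$\Rightarrow$(ii). First, a repairable point: for integral polynomials with rational coefficients the congruence $p_{i}(n_{0}+wn)\equiv p_{i}(n_{0})\pmod w$ can fail (take $p(n)=n(n-1)/2$, $w=2$, $n_{0}=0$, $n=1$); one must pass to a modulus $wd$ with $d$ clearing denominators, which is exactly Lemma~\ref{P-subjint}. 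Second, and fatally: the substitution $n\mapsto n_{0}+wn$ does \emph{not} make the polynomials homogeneous. Writing $p_{i}(n_{0}+wn)=p_{i}(n_{0})+w\widetilde{p}_{i}(n)$, a configuration $\{a',a'+\widetilde{p}_{1}(n),\ld,a'+\widetilde{p}_{r}(n)\}$ in $\frac1w\bigl(E\cap(t+w\Z)-t\bigr)$ pulls back to $\{a,\,a+p_{1}(n_{0}+wn)-p_{1}(n_{0}),\ld\}$ in $E$, not to $\{a,\,a+p_{1}(n_{0}+wn),\ld\}$: you have silently discarded the constants $p_{i}(n_{0})$, which are divisible by $w$ but in general nonzero. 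Joint intersectivity lets you make these constants divisible by any prescribed modulus, but never equal to zero unless the $p_{i}$ have a common integer root, which is a strictly stronger hypothesis (the paper's example $p(n)=(n^{2}-5)(n^{2}-41)(n^{2}-205)$ is intersective with no rational root). Iterating the substitution only reproduces the same problem with new nonzero constant terms, so the reduction to the homogeneous polynomial Szemer\'edi theorem of \cite{psz}, \cite{BM1} never terminates. This is precisely the obstruction the authors emphasize in the introduction: PET induction and the homogeneous combinatorial machinery break down once nonzero constant terms are present, which is why a new proof was required.

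Your closing paragraph correctly guesses the route the paper actually takes --- Furstenberg correspondence, reduction of Theorem~\ref{ThD} to pro-nilsystems via the characteristic-factor theorem for polynomial averages, and then recurrence on nilsystems --- but it only names this route; none of its substance is carried out. The actual content of the paper is: a geometric argument (Lemma~\ref{geom}) showing that $\mu'_{Y}(A^{r+1}\cap Y)>0$ once the orbit closure $Y$ of the diagonal contains the diagonal; and the proof that for jointly intersective $p_{1},\ld,p_{r}$ the closure of the polynomial orbit $\{g(n)x\}$ on a nilmanifold contains $x$ (Propositions~\ref{P-DinY} and~\ref{P-PDinY}), which is established first on tori (Lemmas~\ref{P-subdiv} and~\ref{P-subjint} to remove rational parts along a sublattice, plus the observation that joint intersectivity prevents any linear combination of the polynomials from being a nonzero constant) and then on nilmanifolds by induction on dimension, using Malcev coordinates and the maximal-torus criterion. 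Since your proposed combinatorial reduction fails and the ergodic alternative is only sketched at the level of a table of contents, the implication (iii)$\Rightarrow$(ii) remains unproven in your write-up.
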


\begin{remark}
One can easily show (see Section \ref{s-OneVar} below)
that several integral polynomials of one variable are jointly intersective
if and only if they are all divisible by a single intersective polynomial,
and thus it follows from Theorem~\ref{Th}
that a family $P$ of integral polynomials possesses the PSZ property
iff it is of the form $P=\{q_{1}p,q_{2}p,\ld,q_{r}p\}$
where $q_{1},\ld,q_{r}\in\Q[n]$ and $p$ is an intersective polynomial.
In particular,
for any intersective polynomial $p$ and any $r\in\N$
the family $P=\{p,2p,\ld,rp\}$ has PSZ property;
this result was recently obtained by Frantzikinakis (\cite{Fra}).
\end{remark}

Theorem~\ref{Th} tells us that the only obstacle
for a family of integral polynomials to possess PSZ property
is of {\it arithmetic\/} nature.
The following direct corollary of Theorem~\ref{Th}
gives a precise meaning to this observation:
\begin{theorem}
\label{Thp}
If\/ $p_{1},\ld,p_{r}$ are integral polynomials
such that any lattice $k\Z$ in $\Z$ contains a configuration of the form
$\{a,a+p_{1}(n),\ld,a+p_{r}(n)\}$ with $a,n\in\Z$,
then any set of positive upper Banach density in $\Z$
also contains such a configuration.
\end{theorem}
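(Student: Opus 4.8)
The plan is to obtain Theorem~\ref{Thp} directly from Theorem~\ref{Th}, the entire content of the deduction being the observation that the arithmetic hypothesis of Theorem~\ref{Thp} is simply a rephrasing of the joint intersectivity of the family $P=\{p_{1},\ld,p_{r}\}$. First I would unwind the hypothesis. Fix $k\in\N$. To say that the lattice $k\Z$ contains a configuration of the form $\{a,a+p_{1}(n),\ld,a+p_{r}(n)\}$ means that there exist $a,n\in\Z$ with $a\in k\Z$ and $a+p_{i}(n)\in k\Z$ for every $i=1,\ld,r$; subtracting, $p_{i}(n)=(a+p_{i}(n))-a\in k\Z$, so $p_{i}(n)$ is divisible by $k$ for all $i$. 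Conversely, if there is an $n$ with all the $p_{i}(n)$ divisible by $k$, then $\{0,p_{1}(n),\ld,p_{r}(n)\}\sle k\Z$, so the choice $a=0$ produces such a configuration. Hence the assumption that \emph{every} lattice $k\Z$ ($k\in\N$) contains a configuration $\{a,a+p_{1}(n),\ld,a+p_{r}(n)\}$ is equivalent to the statement that $p_{1},\ld,p_{r}$ are jointly intersective.

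With this reformulation in hand the conclusion is immediate: under the hypothesis of Theorem~\ref{Thp} the family $P$ is jointly intersective, so by the implication (iii)$\Rightarrow$(i) of Theorem~\ref{Th} it has the PSZ property, and therefore for every $E\sle\Z$ with $d^{*}(E)>0$ the set $N_{P}(E)$ is nonempty; this says precisely that $E$ contains a configuration of the form $\{a,a+p_{1}(n),\ld,a+p_{r}(n)\}$ with $a,n\in\Z$. Since all the substance resides in Theorem~\ref{Th}, there is no real obstacle to surmount here; the only points that deserve a word are the (harmless) freedom to take $a=0$ when placing a configuration inside a lattice and the fact that a ``configuration'' carries no non-degeneracy requirement, so that this trivial translate is legitimate in the easy direction of the equivalence. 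Should one wish for the a~priori stronger conclusion that $N_{P}(E)$ is syndetic, one would invoke the implication (iii)$\Rightarrow$(ii) of Theorem~\ref{Th} instead.
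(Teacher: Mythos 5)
Your proposal is correct and is exactly the paper's route: Theorem~\ref{Thp} is stated there as a direct corollary of Theorem~\ref{Th}, resting on the same observation (already implicit in the paper's remark that non-jointly-intersective polynomials yield a lattice $k\Z$ with no configuration) that the lattice hypothesis is equivalent to joint intersectivity, after which (iii)$\Rightarrow$(i) gives the conclusion. Nothing is missing; your aside about taking $a=0$ and the absence of a non-degeneracy requirement is the only point of care, and you handle it correctly.
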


As a matter of fact,
we will obtain a ``multiparameter'' version of Theorem~\ref{Th},
that is, we will prove this theorem for polynomials of several variables.
(Passing from one to many variables does not make the proof longer,
but essentially strengthens the theorem.)
We say that a polynomial $p$ of $m\geq 1$ variables 
with rational coefficients is {\it integral\/}
if $p(\Z^{m})\sle\Z$.
We will interpret any integral polynomial $p$ of $m$ variables 
as a mapping $\Z^{m}\ra\Z$,
and say that $p$ is {\it an integral polynomial on $\Z^{m}$}.
A set $S$ in $\Z^{m}$ is said to be {\it syndetic\/}
if $S+K=\Z^{m}$ for some finite $K\sln\Z^{m}$;
the rest of definitions do not change,
and, starting from this moment,
we will assume that the polynomials $p_{1},\ld,p_{r}$ in Theorem~\ref{Th}
are integral polynomials on $\Z^{m}$.

Clearly, (ii) in Theorem~\ref{Th} implies (i);
it is also clear that (i) implies (iii):
if $p_{1},\ld,p_{r}$ are not jointly intersective
and $k\in\N$ is such that for no $n\in\Z^{m}$
the integers $p_{1}(n),\ld,p_{r}(n)$ are all divisible by $k$,
the lattice $k\Z$ does not contain configurations
of the form $\{a,a+p_{1}(n),\ld,a+p_{r}(n)\}$.
So, it is only the implication $\hbox{(iii)}\Longrightarrow\hbox{(ii)}$
which needs to be proven.
We will actually get a stronger result:
\begin{theorem}
\label{P-Sz}
Let $p_{1},\ld,p_{r}$ be jointly intersective integral polynomials on $\Z^{m}$
and let $E\sle\Z$, $d^{*}(E)>0$.
Then there exists $\eps>0$ such that the set 
$$
\Bigl\{n\in\Z^{m}: 
d^{*}\bigl(E\cap(E-p_{1}(n))\cap\ld\cap(E-p_{r}(n))\bigr)>\eps\Bigr\}
$$
is syndetic.
\end{theorem}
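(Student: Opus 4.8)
The plan is to prove Theorem~\ref{P-Sz} via Furstenberg's correspondence principle together with a nilpotent-flavored version of the polynomial multiple recurrence theorem. First I would pass from the combinatorial statement to an ergodic-theoretic one: given $E\sle\Z$ with $d^{*}(E)>0$, the correspondence principle produces an invertible measure-preserving system $(X,\B,\mu,T)$ and a set $A\in\B$ with $\mu(A)=d^{*}(E)>0$ such that, for every finite $F\sln\Z^{m}$,
$$
d^{*}\Bigl(E\cap\bigcap_{n\in F}\bigl(E-p_{i(n)}(n)\bigr)\Bigr)\ \geq\ \mu\Bigl(\bigcap_{n\in F}T^{-p_{i(n)}(n)}A\Bigr);
$$
more precisely, for a fixed tuple we get $d^{*}\bigl(E\cap(E-p_1(n))\cap\ld\cap(E-p_r(n))\bigr)\geq\mu\bigl(A\cap T^{-p_1(n)}A\cap\ld\cap T^{-p_r(n)}A\bigr)$. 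Thus it suffices to show that for jointly intersective $p_1,\ld,p_r$ the set
$$
\Bigl\{n\in\Z^{m}:\ \mu\bigl(A\cap T^{-p_1(n)}A\cap\ld\cap T^{-p_r(n)}A\bigr)>\eps\Bigr\}
$$
is syndetic for some $\eps>0$ depending only on $\mu(A)$.

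The heart of the matter is therefore a \emph{multiple recurrence theorem for jointly intersective polynomials}: for any m.p.s. $(X,\B,\mu,T)$, any $A$ with $\mu(A)>0$, and any jointly intersective integral $p_1,\ld,p_r$ on $\Z^m$, the averages
$$
\frac{1}{|\Phi_N|}\sum_{n\in\Phi_N}\mu\bigl(A\cap T^{-p_1(n)}A\cap\ld\cap T^{-p_r(n)}A\bigr)
$$
along a F\o lner sequence $(\Phi_N)$ in $\Z^m$ stay bounded below by a positive constant (in fact, following the approach advertised in the abstract, one should establish convergence of the corresponding functional averages $\frac1{|\Phi_N|}\sum_{n\in\Phi_N}T^{p_1(n)}f_1\cdots T^{p_r(n)}f_r$ in $L^2$ and identify the limit). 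Syndeticity of the return-time set then follows from positivity of the lower limit of these averages by a standard argument: if the set $\{n:\mu(A\cap\bigcap_i T^{-p_i(n)}A)>\eps\}$ failed to be syndetic, its complement would contain arbitrarily large boxes, and averaging over a F\o lner sequence built from those boxes would force the limit below $\eps$; choosing $\eps$ to be, say, half the positive lower bound gives a contradiction. So the combinatorial content reduces cleanly to the positivity statement, and the multiparameter feature costs nothing here since $\Z^m$ has perfectly good F\o lner sequences.

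The real work—and the main obstacle—is proving that positivity lower bound when the polynomials are merely jointly intersective rather than homogeneous. The strategy is the one flagged in the abstract: reduce the recurrence problem to the study of translations on nilmanifolds, where polynomial orbits are equidistributed in explicitly describable subnilmanifolds. Concretely, I expect the argument to run as follows. By the structure theory (Host–Kra / Ziegler factors), the limiting behavior of the polynomial averages is controlled by a characteristic factor which is an inverse limit of nilsystems, so one may assume $X$ is a nilmanifold $G/\Gamma$ with $T$ a translation. On a nilsystem the orbit $\{\bigl(T^{p_1(n)}x,\ld,T^{p_r(n)}x\bigr):n\in\Z^m\}$ is equidistributed (by Leibman's theorem on polynomial sequences in nilmanifolds) in a subnilmanifold $Y$ of $(G/\Gamma)^r$, and the crucial point is that joint intersectivity guarantees $Y$ meets the "diagonal-translate" region where $A\times\ld\times A$ has mass comparable to $\mu(A)^r$—this is exactly where the arithmetic hypothesis enters, replacing the value $n=0$ (which works in the homogeneous case because $p_i(0)=0$) by a suitable $n$ realizing simultaneous divisibility in the relevant finite quotient. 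Pinning down $Y$ and showing the intersectivity condition forces the needed non-degeneracy is the technical crux; it will likely require a careful induction (a PET-type / van der Corput induction on the "complexity" of the polynomial system) to peel off lower-complexity pieces, together with a lemma translating joint intersectivity into a statement about the closure of $\{(p_1(n),\ld,p_r(n))\bmod k : n\in\Z^m\}$ hitting $0$ for every $k$, hence about the orbit in every finite nilsystem quotient. Once the nilsystem case is in hand, a limiting argument and the reduction to characteristic factors complete the proof of the lower bound, and hence of Theorem~\ref{P-Sz}.
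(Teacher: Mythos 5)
Your outer shell coincides with the paper's own proof of Theorem~\ref{P-Sz}: Furstenberg's correspondence principle reduces everything to the measure-theoretic quantities $c_{n}=\mu\bigl(A\cap T^{-p_{1}(n)}A\cap\ld\cap T^{-p_{r}(n)}A\bigr)$, and once one knows that the uniform Ces\`{a}ro limit of $c_{n}$ is a positive constant $C$ (Theorem~\ref{ThD}), the level set $\{n: c_{n}>C/2\}$ has positive lower Banach density in $\Z^{m}$ and is therefore syndetic; your ``complement contains arbitrarily large boxes'' argument is the same observation. So the deduction of the combinatorial statement from the recurrence statement is correct and matches the paper.

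The gap is that the recurrence statement itself, which you rightly call the heart of the matter, is not proved but only gestured at, and the mechanism you invoke for the crux is the wrong one. You say the non-degeneracy of the orbit closure $Y$ will ``likely require a PET-type / van der Corput induction on complexity''; but PET induction (and the polynomial van der Waerden/Hales--Jewett input it rests on) is exactly what breaks down when the $p_{i}$ have non-zero constant terms --- this is the stated reason the paper develops a different argument. What is actually needed, after the Host--Kra/Ziegler reduction to nilsystems (which you do identify correctly, including the inverse-limit step), is twofold. First, an algebraic analysis showing that joint intersectivity forces $\corb_{g}(x)\ni x$ for the relevant polynomial sequence $g$ (Proposition~\ref{P-DinY}): in the paper this is done by passing to sublattices on which the ``rational part'' of the polynomials is killed (Lemma~\ref{P-subjint}), analyzing the projection of the orbit to the maximal torus factor via Weyl's theorem (Section~\ref{S-Tor}, Proposition~\ref{P-divzer}), and then inducting on the dimension of a proper closed connected subgroup, using Malcev coordinates (Proposition~\ref{P-polP}) and Proposition~\ref{P-maxtor}. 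Second, even granting $Y\sge\Delta_{X^{r+1}}$, your claim that $Y$ ``meets the region where $A\times\ld\times A$ has mass'' does not by itself give $\mu'_{Y}(A^{r+1}\cap Y)>0$ for a merely measurable $A$; the paper needs the Lebesgue-density-point argument of Lemma~\ref{geom}, exploiting that the coordinate projections of $Y$ are submersions near the diagonal, to convert the containment of the diagonal into positivity of the measure. Without these two ingredients your proposal reduces Theorem~\ref{P-Sz} to an unproved recurrence theorem rather than proving it.
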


Like the proof of the polynomial Szemer\'{e}di theorem in \cite{psz},
our proof of Theorem~\ref{P-Sz} 
relies on Furstenberg's correspondence principle.
This principle, which plays instrumental role in \cite{Fo},
can be found in the following form in \cite{Be}:\\
{\it For any set $E\sle\Z$ with $d^{*}(E)>0$
there exists an invertible probability measure preserving system $(X,\B,\mu,T)$
and a set $A\in\B$ with $\mu(A)=d^{*}(E)$
such that for any $r\in\N$ and $n_{1},n_{2},\ld,n_{r}\in\Z$ one has
$$
d^{*}\bigl(E\cap(E-n_{1})\cap\ld\cap(E-n_{r})\bigr)
\geq\mu\bigl(A\cap T^{-n_{1}}A\cap\ld\cap T^{-n_{r}}A\bigr).
$$}

For a multiparameter sequence $(a_{n})_{n\in\Z^{m}}$ of real numbers we define 
$\Clim_{n}a_{n}=\lim_{N\ras\infty}\frac{1}{|\Phi_{N}|}\sum_{n\in\Phi_{N}}a_{n}$,
if this limit exists for every F{\o}lner sequence $(\Phi_{N})$ in $\Z^{m}$.
(Note that if this limit exists for all F{\o}lner sequences, 
then it does not depend on the choice of the sequence.)
In view of Furstenberg's correspondence principle,
Theorem~\ref{P-Sz} is a corollary of the following ergodic result.
\begin{theorem}
\label{ThD}
Let integral polynomials $p_{1},\ld,p_{r}$ on $\Z^{m}$ be jointly intersective.
Then for any invertible probability measure preserving system $(X,\B,\mu,T)$
and any set $A\in\B$ with $\mu(A)>0$,
\begin{equation}
\label{f-pl}
\Clim_{n}\mu\bigl(A\cap T^{-p_{1}(n)}A\cap\ld\cap T^{-p_{r}(n)}A\bigr)>0.
\end{equation}
\end{theorem}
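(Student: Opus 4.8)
The plan is to derive Theorem~\ref{ThD} from the (known) polynomial Szemer\'edi theorem for homogeneous families by a ``passage to a finite-index extension'' argument that absorbs the constant terms. The key observation is that joint intersectivity is exactly the hypothesis that allows us to replace the system $(X,\B,\mu,T)$ by a system on which the polynomials $p_i$ effectively vanish at a common point modulo an arbitrarily high power. Concretely, for a fixed $k$, joint intersectivity gives $b\in\Z^m$ with $k\mid p_i(b)$ for all $i$; writing $p_i(b+kl)=p_i(b)+k\,q_i(l)$ we see that after the substitution $n=b+kl$ the polynomials $\tilde p_i(l):=\frac1k\bigl(p_i(b+kl)-p_i(b)\bigr)$ are again integral polynomials on $\Z^m$, but now with a common value to exploit. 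The subtlety is that $p_i(b)$ itself need not be $0$, so one cannot simply iterate this; instead one must run the argument inside the right dynamical framework.

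First I would set up the ergodic-theoretic skeleton: reduce to an ergodic system by the ergodic decomposition, and then invoke the structure theory that the excerpt flags as the heart of the matter --- that polynomial multiple recurrence is governed by nilsystems. Using the machinery of characteristic factors for polynomial averages (Host--Kra / Leibman nilfactors), it suffices to prove \eqref{f-pl} when $(X,\B,\mu,T)$ is an inverse limit of nilsystems, indeed one may further reduce to a single nilsystem $X=G/\Gamma$ with $T$ a translation by a fixed group element. Second, on a nilsystem the orbit closures of polynomial sequences $(T^{p_i(n)}x)_n$ are sub-nilmanifolds and the relevant averages converge to integrals over an explicit ``polynomial diagonal'' sub-nilmanifold; the positivity of $\Clim_n \mu\bigl(A\cap T^{-p_1(n)}A\cap\ld\cap T^{-p_r(n)}A\bigr)$ then becomes a statement about the Haar measure of this sub-nilmanifold meeting $A\times\cdots\times A$. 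Third --- and this is where joint intersectivity enters decisively --- I would show that the ``bad'' residue obstructions, i.e.\ the ways in which $\overline{\{(p_1(n),\ld,p_r(n)) : n\in\Z^m\}}$ in the relevant nilmanifold could avoid a neighborhood of the diagonal, are precisely arithmetic: they correspond to a congruence condition $k\mid p_i(n)$ failing simultaneously, which joint intersectivity forbids. Passing to the finite-index subsystem indexed by $n\equiv b\pmod k$ as above makes the substituted polynomials $\tilde p_i$ homogeneous-like on the nilfactor, and one finishes by the homogeneous polynomial Szemer\'edi theorem (or rather its nilsystem proof) applied there, together with a uniformity/compactness argument to get syndeticity in Theorem~\ref{P-Sz}.

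I expect the main obstacle to be the third step: controlling how the constant terms $p_i(b)$ interact with the nilmanifold structure. Even after the substitution $n=b+kl$ the term $T^{p_i(b)}$ survives as a fixed translation, and on a general nilsystem a fixed translation need not be trivial; one must argue that for a suitable choice of $k$ (a high enough power of a well-chosen integer, adapted to the finitely many nilfactors that are characteristic for the given polynomial family) this residual translation lies in a subgroup that acts trivially on the relevant characteristic factor, or can be absorbed by enlarging the nilmanifold. This is essentially a quantitative ``divisibility on nilmanifolds'' statement --- that the subgroup generated by $\{g^{k} : k\in\N\}$ is dense in the identity component for the sub-nilmanifolds that matter --- and reconciling it cleanly with the PET-induction / characteristic-factor bookkeeping is the delicate technical core. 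Everything else (the correspondence principle, the ergodic decomposition, extracting syndeticity from a uniform lower bound via a compactness argument on the space of systems) is routine given the results quoted above.
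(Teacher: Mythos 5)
Your reduction to nilsystems via characteristic factors matches the paper's first step, but the heart of your argument --- step three --- is a genuine gap, and the specific mechanism you propose for closing it cannot work. You hope to finish by substituting $n=b+kl$ and invoking the homogeneous polynomial Szemer\'edi theorem, absorbing the surviving translation $T^{p_i(b)}$ into ``a subgroup acting trivially on the relevant characteristic factor.'' But joint intersectivity only gives $k\mid p_i(b)$; it never gives a common root (the paper notes intersective polynomials such as $(n^{2}-5)(n^{2}-41)(n^{2}-205)$ with no rational root), so no substitution makes the family homogeneous, and on a nilsystem the residual rotation by $T^{p_i(b)}$ is in general neither trivial nor absorbable --- $T^k$ can be ergodic for every $k$. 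The actual content needed is Proposition~\ref{P-DinY}: for a jointly intersective family, the closure of the polynomial orbit $\{g(n)x\}$ on a nilmanifold contains the initial point $x$. The paper proves this not by quoting the homogeneous theorem (it explicitly cannot: PET induction and polynomial van der Waerden break down with nonzero constant terms), but by a self-contained analysis: on the maximal torus factor, the only obstruction to the orbit closure containing $0$ is a linear combination of the polynomials being a nonzero constant, which intersectivity forbids (Lemma~\ref{P-torzer}, Corollary~\ref{P-intzer}, Proposition~\ref{P-divzer}), with rational parts killed by passing to sublattices on which the polynomials stay jointly intersective and become divisible by $k$ (Lemma~\ref{P-subjint}); one then descends to a proper subnilmanifold and inducts on dimension (Proposition~\ref{P-PDinY}). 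Your ``quantitative divisibility on nilmanifolds'' is a correct diagnosis of where the difficulty lies, but you supply no argument for it, and it is precisely Sections~\ref{S-Tor} and~\ref{S-Nil} of the paper.

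A second, smaller omission: even granting that the orbit closure $Y$ of the diagonal contains $\Del_{X^{r+1}}$, you still must explain why $\mu'_{Y}(A^{r+1}\cap Y)>0$ for an arbitrary measurable set $A$ of positive measure --- $Y$ has measure zero in $X^{r+1}$, so this is not automatic. The paper handles this with a Lebesgue density point argument for manifolds containing the diagonal (Lemma~\ref{geom}); your sketch treats this passage as immediate. So the overall architecture (nilfactors, orbit closures, intersectivity as the arithmetic obstruction) is aligned with the paper, but the two steps that constitute the actual proof --- the return of polynomial orbits to the base point on nilmanifolds, and the density-point lemma converting that topological statement into measure-theoretic positivity --- are missing, and your proposed shortcut through the homogeneous theorem plus a finite-index substitution would fail.
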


\noindent
(We remark that the converse of this theorem is also true:
if the polynomials $p_{1},\ld,p_{r}$ are not jointly intersective,
one can construct a (finite) measure preserving system and a set $A$
such that the limit in (\ref{f-pl}) is equal to 0.
We also remark that having ``$\liminf$'' 
instead of ``$\lim$'' in formula (\ref{f-pl})
would be quite sufficient to prove Theorem~\ref{P-Sz};
but, anyway, it is known that the limit
$\Clim_{n}\mu\bigl(A\cap T^{-p_{1}(n)}A\cap\ld\cap T^{-p_{r}(n)}A\bigr)$ exists,
-- see \cite{cpm}.)

It is worth noticing that while, being ergodic in nature,
our proof of Theorem~\ref{ThD} is quite different 
from the ergodic proofs of polynomial Szemer\'{e}di theorem
in \cite{psz} and \cite{BM1},
and hence provides a new proof 
of the ``homogeneous'' polynomial Szemer\'{e}di theorem as well.
The reason that we had to resort to a completely different approach
lies with the fact that the main ingredients 
of the proofs in \cite{psz} and \cite{BM1},
namely the PET induction and combinatorial results
such as the polynomial van der Waerden theorem (in \cite{psz})
and the polynomial Hales--Jewett theorem (in \cite{BM1}),
do not work when the polynomials involved may have a non-zero constant term.
In particular, it is not clear how to obtain by purely combinatorial means
(or with the help of topological dynamics
but without using an invariant measure)
the following corollary of Theorem~\ref{P-Sz}.
\begin{theorem}
\label{ThvdW}
For any finite partition of\/ $\Z$, $\Z=\bigcup_{i=1}^{k}E_{i}$,
one of $E_{i}$ has the property that for any $r$, $m$, 
and any jointly intersective integral polynomials $p_{1},\ld,p_{r}$ on $\Z^{m}$
there exists $\eps>0$ such that the set 
$\bigl\{n\in\Z^{m}:
d^{*}\bigl(E_{i}\cap(E_{i}-p_{1}(n))\cap\ld\cap(E_{i}-p_{r}(n))\bigr)>\eps\bigr\}$
is syndetic.
\end{theorem}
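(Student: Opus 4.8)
The plan is to deduce this theorem from Theorem~\ref{P-Sz} by a standard ultrafilter (or Ramsey-theoretic) argument, exploiting the fact that the conclusion of Theorem~\ref{P-Sz} is ``monotone'' in $E$: if $E\sle E'$ and $E$ has the syndeticity property with parameter $\eps$, then so does $E'$. First I would fix an idempotent ultrafilter $p$ in the Stone--\v{C}ech compactification $\beta\N$ (or work with the dynamical reformulation) and recall the classical fact that for any finite partition $\Z=\bigcup_{i=1}^{k}E_i$, exactly one cell $E_{i_0}$ belongs to $p$, and that membership in $p$ is preserved under the relevant operations. Alternatively, and perhaps more transparently, I would argue directly: among $E_1,\ld,E_k$ at least one, say $E_{i_0}$, has $d^*(E_{i_0})>0$, but this alone is not enough, since $i_0$ could depend on the polynomial family; the point is to find a single cell that works uniformly.

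The key step is the following \emph{partition regularity} observation. Call a set $E\sle\Z$ \emph{good} if for every $r,m$ and every jointly intersective family $p_1,\ld,p_r$ on $\Z^m$ there is $\eps>0$ making the indicated set of $n$ syndetic. By Theorem~\ref{P-Sz}, every set of positive upper Banach density is good. I claim the family of good sets is partition regular, i.e.\ if $E=E_1\cup E_2$ is good then some $E_j$ is good. Granting the claim, an easy induction on $k$ shows that in any finite partition $\Z=\bigcup_{i=1}^k E_i$ of the good set $\Z$, some cell is good, which is exactly Theorem~\ref{ThvdW}. To prove the claim, suppose neither $E_1$ nor $E_2$ is good: then there is a family $P^{(1)}=\{p_1^{(1)},\ld\}$ witnessing the failure for $E_1$ and a family $P^{(2)}$ witnessing it for $E_2$. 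One then combines these into a single jointly intersective family on a product of parameter spaces (taking $p_i^{(1)}(n^{(1)})$ together with $p_j^{(2)}(n^{(2)})$ as polynomials in $n=(n^{(1)},n^{(2)})\in\Z^{m_1+m_2}$; joint intersectivity of the union follows by the Chinese Remainder Theorem from joint intersectivity of each piece) and derives a contradiction with the fact that $E_1\cup E_2=E$ is good, since $E_i\cap(E_i-p(n))\cap\cdots\sle E\cap(E-p(n))\cap\cdots$ and a union of two non-syndetic sets is non-syndetic in $\Z^{m_1+m_2}$, while the set of $n$ that works for $E$ must be syndetic.

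A small technical point to handle carefully is the dependence of $\eps$ on $E_i$: since we are only asserting existence of \emph{some} $\eps>0$ for each family, and the families under consideration are countably many at worst in any given argument, no uniformity in $\eps$ across families is needed, and for a fixed family the finitely many cells cause no trouble. The main obstacle I anticipate is making the ``combination of two witnessing families'' precise: one must check that non-syndeticity in the product group $\Z^{m_1+m_2}$ really does follow from non-syndeticity of the two original sets of $n$-values in $\Z^{m_1}$ and $\Z^{m_2}$ respectively --- here one uses that the set of good $n$ for $E=E_1\cup E_2$ contains, for each fixed $n^{(2)}$, a syndetic-in-$n^{(1)}$ set only if $E_1$ already worked, so a Fubini-type / pigeonhole argument over the two coordinates closes the loop. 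Everything else is routine bookkeeping with densities and F\o{}lner sequences, already available from the earlier sections.
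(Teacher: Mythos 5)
Your proposal treats this as a hard partition-regularity problem, but the quantifier structure of Theorem~\ref{ThvdW} makes it an immediate corollary of Theorem~\ref{P-Sz}, and this is exactly the paper's proof: since $d^{*}$ is subadditive and $d^{*}(\Z)=1$, some cell $E_{i_{0}}$ has $d^{*}(E_{i_{0}})>0$, and Theorem~\ref{P-Sz} then applies to this \emph{fixed} cell for \emph{every} choice of $r$, $m$ and every jointly intersective family (with $\eps$ allowed to depend on the family, which is all the statement asks). Your worry that ``$i_{0}$ could depend on the polynomial family'' is unfounded: the cell is chosen once, by positive density alone, before any family is mentioned. Note that the genuinely delicate statement is Theorem~\ref{ThnvdW}, where the set of $n$ is required to lie inside $E_{i}$ itself; that is where the paper needs the extra uniformity of Proposition~\ref{P-cvdW}, and you may have been conflating the two results.

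Beyond being unnecessary, your substitute argument has two concrete flaws. First, the combinatorial claim that ``a union of two non-syndetic sets is non-syndetic'' is false: a set and its complement can both have arbitrarily long gaps while their union is all of $\Z^{m}$. Second, the inclusion you invoke, $E_{i}\cap(E_{i}-p(n))\cap\ld\sle E\cap(E-p(n))\cap\ld$ with $E=E_{1}\cup E_{2}$, goes the wrong way for your contradiction: syndeticity of $\{n: d^{*}(E\cap(E-p_{1}(n))\cap\ld)>\eps\}$ gives no information about the corresponding sets for $E_{1}$ or $E_{2}$, because a configuration $\{a,a+p_{1}(n),\ld\}\sln E$ may have its points distributed across both cells, so neither cell need contain it. Thus the claimed partition regularity of ``good'' sets is not established by your argument (it is true, but only because good coincides with having positive upper Banach density, which is the trivial route you set aside). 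The correct write-up is two lines: pick $E_{i}$ with $d^{*}(E_{i})>0$ and quote Theorem~\ref{P-Sz}.
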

\noindent
\begin{remarks}
1.
One can also show that,
given a partition $\Z=\bigcup_{i=1}^{k}E_{i}$ of $\Z$,
for any collection $p_{1},\ld,p_{r}$ of integral polynomials on $\Z$
one of $E_{i}$ contains many configurations of the form
$\{a,a+p_{1}(n),\ld,a+p_{r}(n)\}$ with $n\in E_{i}$;
see Theorem~\ref{ThnvdW} below.

\noindent
2. 
Note that if $p_{1},\ld,p_{r}$ are not jointly intersective
and $k\in\N$ is such that for no $n\in\Z^{m}$
the integers $p_{1}(n),\ld,p_{r}(n)$ are all divisible by $k$,
then no element of the partition $\Z=\bigcup_{i=0}^{k-1}(k\Z+i)$ of $\Z$
contains configurations of the form $\{a,a+p_{1}(n),\ld,a+p_{r}(n)\}$.
\end{remarks}

The proof of Theorem~\ref{ThD} is divided into several steps. 
The first one is a reduction to nilsystems via Host-Kra--Ziegler machinery. 
The second step is a differential geometry argument (Lemma~\ref{geom}) 
which allows us to reduce the recurrence problem
to properties of the closure of an orbit in a nilsystem (Proposition~\ref{P-DinY}). 
The last step is a description of polynomial orbits on tori (Section~\ref{S-Tor}) 
and on nilmanifolds (Section~\ref{S-Nil}).
In Section~\ref{S-Sz} we finish the proof of Theorem~\ref{P-Sz}
and obtain (the enhanced version of) Theorem~\ref{ThvdW}.
Section~\ref{S-Misc} is devoted to concluding remarks and conjectures.
\section{Polynomial Szemer\'{e}di theorem
and polynomial orbits in nilmanifolds}
\label{S-Orb}
A {\it nilsystem\/} is a measure preserving system
defined by a translation $g\Gam\mapsto ag\Gam$
on a compact nilmanifold $X=G/\Gam$
(where $G$ is a nilpotent Lie group, 
$\Gam$ is a discrete uniform subgroup of $G$, and $a\in G$)
equipped with the (normalized) Haar measure, which will be denoted by $\mu$.
{\it A pro-nilsystem\/} is the inverse limit of a sequence of nilsystems.
Let $p_{1},\ld,p_{r}$ be integral polynomials on $\Z^{m}$, $m\geq1$.
It was proved in \cite{cpm} (see also \cite{HKp})
that for any probability measure preserving system $(X,T,\mu)$,
a certain pro-nilsystem $(\tX,\tT,\tmu)$ 
is {\it a characteristic factor\/} of $(X,T,\mu)$
with respect to the system of polynomial actions
$\{T^{p_{1}(n)},\ld,T^{p_{r}(n)}\}$,
which means that $(\tX,\tT,\tmu)$ is a factor of $(X,T,\mu)$ such that
for any $f_{0},f_{1},\ld,f_{r}\in L^{\infty}(X)$ one has
\begin{multline*}
\Clim_{n}
\int_{X}f_{0}\cdot f_{1}\comp T^{p_{1}(n)}\cdot\ld\cdot f_{r}\comp T^{p_{r}(n)}\,d\mu
\\
=\Clim_{n}
\int_{X}E(f_{0}|\tX)\cdot E(f_{1}|\tX)\comp\tT^{p_{1}(n)}\cdot\ld
\cdot E(f_{r}|\tX)\comp\tT^{p_{r}(n)}\,d\tmu
\end{multline*}
(where $E(\cd|\tX)$ stands for the conditional expectation with respect to $\tX$).

The statement
\begin{equation}
\label{poslim}
\Clim_{n}
\mu\bigl(A\cap T^{-p_{1}(n)}A\cap\ld\cap T^{-p_{r}(n)}A\bigr)>0
\hbox{ for any measurable $A\sle X$ with $\mu(A)>0$}
\end{equation}
is clearly equivalent to the statement
\begin{eqnarray*}
\Clim_{n}
\int_{X}f\cdot f\comp T^{p_{1}(n)}\cdot\ld\cdot f\comp T^{p_{r}(n)}d\mu>0
\quad\hbox{for all $f\in L^{\infty}(X)$}\kern35mm
\\
\hbox{such that $f\geq 0$ and $\int_{X}f\,d\mu>0$}.
\end{eqnarray*}
Thus, in order to prove Theorem~\ref{ThD},
we have to check (\ref{poslim}) for pro-nilsystems only.
The following lemma, which appears in \cite{FK}, 
shows that it is enough to check the result 
in the case where $(X,T,\mu)$ is a nilsystem.
\begin{lemma} 
Let $r\in\N$.
Let $(X,\B,\mu,T)$ be a measure preserving dynamical system 
and $(\B_\alf)_{\alf\geq1}$ be an increasing sequence 
of $T$-invariant sub-$\sig$-algebras 
such that $\bigvee_{\alf\geq1}\B_\alf=\B$. 
Then, for any $B\in\B$, 
there exists $\alf\geq1$ and $B'\in\B_\alf$ 
such that $\mu(B')\geq\mu(B)/2$ and, for all $n_1,\ld,n_r\in\Z$, 
$$
\mu\left(B\cap T^{-n_1}B\cap\ld\cap T^{-n_r}B\right)
\geq{\txt\frac{1}{2}}\mu\left(B'\cap T^{-n_1}B'\cap\ld\cap T^{-n_r}B'\right).
$$
\end{lemma}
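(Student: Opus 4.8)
The plan is to reduce the multiple-recurrence problem on a general system to the same problem on the sub-$\sigma$-algebras $\B_\alf$, paying a harmless factor of $2$. First I would fix $B\in\B$ with $\mu(B)>0$ and observe that since $\bigvee_{\alf\geq1}\B_\alf=\B$ and the $\B_\alf$ are increasing, the conditional expectations $E(\mathbf 1_B\mid\B_\alf)$ converge to $\mathbf 1_B$ in $L^2(\mu)$ (martingale convergence), hence some subsequence converges a.e. So I would pick $\alf$ large enough that $\|E(\mathbf 1_B\mid\B_\alf)-\mathbf 1_B\|_{L^2}$, or better $\|E(\mathbf 1_B\mid\B_\alf)-\mathbf 1_B\|_{L^1}$, is very small---say at most $\mu(B)/(2r+2)$ or some similar explicit bound---and set $g=E(\mathbf 1_B\mid\B_\alf)$, a $\B_\alf$-measurable function with $0\leq g\leq1$ and $\int g\,d\mu=\mu(B)$.

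The next step is to turn $g$ into a \emph{set} $B'\in\B_\alf$. The natural device is a level set: put $B'=\{x: g(x)\geq c\}$ for a suitable threshold $c\in(0,1)$. Since $\int g\,d\mu=\mu(B)$, Chebyshev-type bookkeeping gives $\mu(B')\geq\mu(B)/2$ once $c$ is chosen appropriately (e.g.\ $c=1/2$ works after we know $g$ is close to an indicator; more robustly one integrates $\mu(\{g\geq c\})$ in $c$ over $[0,1]$ to get $\int_0^1\mu(\{g\geq c\})\,dc=\int g\,d\mu=\mu(B)$, so some $c$ gives $\mu(\{g\geq c\})\geq\mu(B)$, but to get a single $c$ that also controls the error term one takes $c$ near $1$). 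The point of making $g$ $L^1$-close to $\mathbf 1_B$ is that the symmetric difference $B'\triangle B$ then has small measure: on $B'$ one has $g\geq c$ while $\mathbf 1_B=0$ off $B$, so $\mu(B'\setminus B)\leq \frac{1}{1-c}\|g-\mathbf 1_B\|_{L^1}$ or similar, and likewise $\mu(B\setminus B')$ is small. Choosing the closeness parameter in terms of $c$ and $r$ makes $\mu(B'\triangle B)\leq \mu(B)/(2(r+1))$, say.

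Finally I would compare the two intersections using $T$-invariance of $\mu$. For any $n_1,\dots,n_r\in\Z$,
$$
\mu\bigl(B'\cap T^{-n_1}B'\cap\cdots\cap T^{-n_r}B'\bigr)
-\mu\bigl(B\cap T^{-n_1}B\cap\cdots\cap T^{-n_r}B\bigr)
\leq \sum_{j=0}^{r}\mu\bigl(T^{-n_j}(B'\setminus B)\bigr)
= (r+1)\,\mu(B'\setminus B),
$$
where $n_0=0$, by a telescoping estimate (replacing one factor at a time and bounding the change by $\mu(T^{-n_j}(B'\setminus B))=\mu(B'\setminus B)$). With $\mu(B'\setminus B)\leq\mu(B)/(2(r+1))$ and $\mu(B'\cap T^{-n_1}B'\cap\cdots\cap T^{-n_r}B')$ possibly as large as $\mu(B')$ but always $\geq$ the $B$-intersection minus $(r+1)\mu(B\setminus B')$, a short rearrangement yields
$$
\mu\bigl(B\cap T^{-n_1}B\cap\cdots\cap T^{-n_r}B\bigr)\ \geq\ \tfrac12\,\mu\bigl(B'\cap T^{-n_1}B'\cap\cdots\cap T^{-n_r}B'\bigr),
$$
valid uniformly in the $n_j$. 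The main obstacle---and it is a mild one---is bookkeeping the constants: one must choose the threshold $c$ and the $L^1$-approximation level simultaneously so that \emph{both} $\mu(B')\geq\mu(B)/2$ and the symmetric-difference error is small enough to survive the factor $(r+1)$ in the telescoping bound; since $r$ is fixed in advance this is only a matter of ordering the choices (first $c$, then the approximation level depending on $c$ and $r$, then $\alf$). Note also that since each $\B_\alf$ is $T$-invariant, $B'\in\B_\alf$ really is a legitimate set for the reduced problem, so the lemma delivers exactly what is needed to pass from pro-nilsystems to nilsystems.
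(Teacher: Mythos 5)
The first two steps of your plan (martingale convergence, level set $B'=\{E(\mathbf 1_B\mid\B_\alf)\geq c\}$ with $c$ close to $1$ and $\mu(B')\geq\mu(B)/2$) match the paper's choice of $B'$. The gap is in the final step: your comparison of the two intersections is purely \emph{additive}. The telescoping bound gives
$$
\mu\bigl(B\cap T^{-n_1}B\cap\ld\cap T^{-n_r}B\bigr)\;\geq\;\mu\bigl(B'\cap T^{-n_1}B'\cap\ld\cap T^{-n_r}B'\bigr)-(r+1)\,\mu(B'\setminus B),
$$
and no choice of the approximation level can turn this into the \emph{multiplicative} inequality claimed in the lemma, because the quantity $\mu\bigl(B'\cap T^{-n_1}B'\cap\ld\cap T^{-n_r}B'\bigr)$ varies with $n_1,\ld,n_r$ and may be positive but far smaller than $\mu(B'\setminus B)$ (which is in general strictly positive, since $B$ need not lie in any $\B_\alf$). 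For such $n_j$ your bound is vacuous (the right-hand side is negative), while the lemma demands a lower bound of half a positive quantity; the ``short rearrangement'' you invoke would need $(r+1)\mu(B'\setminus B)\leq\frac12\mu(B'\cap T^{-n_1}B'\cap\ld\cap T^{-n_r}B')$ uniformly in the $n_j$, which is false. Indeed, $L^1$-closeness of $B'$ to $B$ alone does not even guarantee the conclusion for an arbitrary such $B'$: the primed intersection can live almost entirely inside $B'\setminus B$.

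The paper avoids this by keeping the estimate at the level of conditional expectations rather than symmetric differences. With $B'=\{\mu(B\mid\B_\alf)\geq1-\frac1{2(r+1)}\}$ and $\B_\alf$ being $T$-invariant, one has $T^{-n}B'=\{\mu(T^{-n}B\mid\B_\alf)\geq1-\frac1{2(r+1)}\}$, so on the set $B'\cap T^{-n_1}B'\cap\ld\cap T^{-n_r}B'$ the conditional probability $\mu\bigl(B\cap T^{-n_1}B\cap\ld\cap T^{-n_r}B\mid\B_\alf\bigr)$ is pointwise at least $1-(r+1)\cdot\frac1{2(r+1)}=\frac12$; integrating this over the primed intersection yields exactly half its measure, no matter how small that measure is. This pointwise (conditional) control is what produces the factor $\frac12$ uniformly in $n_1,\ld,n_r$, and it is the ingredient your additive error estimate cannot replace. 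To repair your proof, replace the symmetric-difference telescoping by this conditional-measure argument (your $B'$ is already of the right form once $c=1-\frac1{2(r+1)}$).
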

\begin{proof}
We assume that $\mu(B)>0$. 
The sequence of conditional probabilities 
$\left(\mu\left(B\mid\B_\alf\right)\right)_{\alf\geq1}$ 
converges in probability to the characteristic function $1_B$. 
Hence there exists $\alf$ such that the set 
$B':=\left\{\mu\left(B\mid\B_\alf\right)\geq1-\frac1{2(r+1)}\right\}$ 
has measure $\geq\frac{1}{2}\mu(B)$. 
For any $n\in\Z$, we have 
$T^{-n}B':=\left\{\mu\left(T^{-n}B\mid\B_\alf\right)\geq1-\frac1{2(r+1)}\right\}$.
Using the fact that 
$\mu\left(B_0\cap B_1\cap\ld\cap B_r\mid\B_\alf\right)\geq1-(r+1)\eta$ 
if $\mu\left(B_i\mid\B_\alf\right)\geq1-\eta$, $0\leq i\leq r$, 
we have
\begin{eqnarray*}
\mu\left(B\cap T^{-n_1}B\cap\ld\cap T^{-n_r}B\right)
&=&
\int_{X}\mu\left(B\cap T^{-n_1}B\cap\ld
\cap T^{-n_r}B\mid \B_\alf\right)\,d\mu\\
&\geq&
\int_{B'\cap T^{-n_1}B'\cap\ld\cap T^{-n_r}B'}\kern-30mm
\mu\left(B\cap T^{-n_1}B\cap\ld\cap T^{-n_r}B\mid \B_\alf\right)\,d\mu\\
&\geq&
{\txt\frac{1}{2}}\mu\left(B'\cap T^{-n_1}B'\cap\ld\cap T^{-n_r}B'\right).
\end{eqnarray*}
\end{proof}

Thus we may and, from now on, will assume that $(X,\mu,T)$ is a nilsystem.

{\it A subnilmanifold\/} of $X$ is a closed subset of $X$ of the form $D=Kx$, 
where $K$ is a closed subgroup of $G$ and $x\in X$. 
A subnilmanifold is a nilmanifold itself 
under the action of the nilpotent Lie group $K$,
and supports a unique probability Haar measure which we will denote by $\mu_{D}$.

It is known (see \cite{pen}, or \cite{Shah} for a much more general result)
that if $H$ is a subgroup of $G$ and $x\in X$,
then $D=\overline{Hx}$ is a subnilmanifold of $X$.

{\it A (multiparameter) polynomial sequence in $G$\/}
is a mapping $g\col\Z^{m}\ra G$ of the form
$g(n)=a_{1}^{p_{1}(n)}\ld a_{r}^{p_{r}(n)}$, $n\in\Z^{m}$,
where $a_{i}\in G$ and $p_{i}$ are integral polynomials on $\Z^{m}$.
It is proved in \cite{mpn}
that if $g$ is a polynomial sequence in $G$
and $D$ is a subnilmanifold of $X$,
then the closure $Y=\corb_g(D)$ 
of the orbit $\orb_{g}(D)=\bigcup_{n\in\Z}g(n)D$ of $D$ 
is either a subnilmanifold or a finite disjoint union of subnilmanifolds of $X$.
Moreover, the sequence $\{g(n)D\}_{n\in\Z}$ 
has an asymptotic distribution in $Y$: 
we have $\Clim_{n}g(n)\mu_{D}=\mu'_{Y}$, 
where $\mu'_{Y}$ is a convex combination of the Haar measures 
on the connected components of $Y$. 
In particular, if $Y$ is connected, 
then $Y$ is a subnilmanifold,
and $\mu'_Y=\mu_Y$ is the Haar measure on $Y$.

Let $p_{1},\ld,p_{r}$ be integral polynomials on $\Z^{m}$;
consider the polynomial sequence
$g(n)=\svect{1_{G}\\a^{p_{1}(n)}\\\vd\\a^{p_{r}(n)}}$, $n\in\Z^{m}$,
in the group $G^{r+1}$.
Let $\Del_{X^{r+1}}$ be {\it the diagonal\/},
$\Del_{X^{r+1}}=\Bigl\{\bx=\svect{x\\\svd\\x}:x\in X\Bigr\}$
in the nilmanifold $X^{r+1}$,
and let $Y=\corb_{g}(\Del_{X^{r+1}})$.
Then for any continuous functions $f_{0},f_{1},\ld,f_{r}$ on $X$,
\begin{eqnarray*}
&&\Clim_{n}
\int_{X}f_{0}\cdot f_{1}\comp T^{p_{1}(n)}\cdot\ld
\cdot f_{r}\comp T^{p_{r}(n)}\,d\mu
\\
&=&\Clim_{n}
\int_{\Del_{X^{r+1}}}f_{0}\tens f_{1}\comp T^{p_{1}(n)}\tens\ld
\tens f_{r}\comp T^{p_{r}(n)}\,d\mu_{\Del_{X^{r+1}}}
\\ 
&=&\Clim_{n}\int_{\Del_{X^{r+1}}}
\bigl(f_{0}\tens f_{1}\tens\ld\tens f_{r}\bigr)(g(n)\overline x)\, 
d\mu_{\Del_{X^{r+1}}}(\overline x)
\\
&=&\Clim_{n}
\int_{g(n)\Del_{X^{r+1}}}f_{0}\tens f_{1}\tens\ld
\tens f_{r}\,d\mu_{g(n)\Del_{X^{r+1}}}.
\\
&=&\int_{Y}f_{0}\tens f_{1}\tens\ld\tens f_{r}\,d\mu'_{Y}.
\end{eqnarray*}
Since $C(X)$ is dense in $L^{r+1}(X,\mu)$ 
and all the marginals of $\mu'_{Y}$ are equal to $\mu$,
we obtain by the multilinearity of the above expressions that
$$
\Clim_{n}
\int_{X}f_{0}\cdot f_{1}\comp T^{p_{1}(n)}\cdot\ld
\cdot f_{r}\comp T^{p_{r}(n)}\,d\mu
=\int_{Y}f_{0}\tens f_{1}\tens\ld\tens f_{r}\,d\mu'_{Y}
$$
for any $f_{0},f_{1},\ld,f_{r}\in L^{\infty}(X)$.
In particular, for any measurable set $A\sle X$,
$$
\Clim_{n}
\mu\bigl(A\cap T^{-p_{1}(n)}A\cap\ld\cap T^{-p_{r}(n)}A\bigr)
=\mu'_{Y}(A^{r+1}\cap Y),
$$
and in order to prove Theorem~\ref{ThD}
we only need to show that $\mu'_{Y}(A^{r+1}\cap Y)>0$ whenever $\mu(A)>0$.

We claim that this is true as long as $Y\sge\Del_{X^{r+1}}$.
Indeed, let us assume that this inclusion holds,
and let $A$ be a set of positive measure in $X$.
Let $x\in X$ be a Lebesgue point of $A$,
and let $\bx=\svect{x\\\svd\\x}\in\Del_{X^{r+1}}$.
Using a system of Malcev coordinates in $G$ (see Section~\ref{S-Nil}),
we identify a connected open neighborhood $\Omega$ of $x$
with an open subset of $\R^{d}$, where $d=\dim X$.
Then, under this identification, 
$Y'=Y\cap\Omega^{r+1}$ is a smooth (polynomial) manifold in $\R^{d(r+1)}$,
and the restriction on $Y'$ of the measure $\mu'_{Y}$
is equivalent to the Lebesgue measure
(that is, the $s$-volume, where $s=\dim Y$) in $Y'$.
Let $S$ be the connected component of $Y'$
that contains $\Del_{\Omega^{r+1}}$.
Our claim now follows from of the following lemma.
\begin{lemma}
\label{geom}
Let $\Omega$ be an open subset of\/ $\R^{d}$
and let $S$ be a connected $C^{1}$-manifold in $\Omega^{k}$
with $S\sge\Del_{\Omega^{k}}$.
Let $\sig$ be the Lebesgue measure on $S$.
Then for any subset $A$ of\/ $\Omega$ with positive Lebesgue measure
one has $\sig(A^{k}\cap S)>0$.
\end{lemma}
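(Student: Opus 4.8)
The plan is to show that $S$, being a connected $C^1$-manifold that contains the diagonal $\Del_{\Omega^k}$, must project surjectively and "thickly" onto the first coordinate copy of $\Omega$, so that a positive-measure set $A$ in that copy pulls back to a positive-measure subset of $S$. The key structural observation is that along the diagonal the tangent space of $S$ contains the tangent space of $\Del_{\Omega^k}$, i.e. at each point $\bar x=(x,\ld,x)$ the $d$-dimensional subspace $\{(v,\ld,v):v\in\R^d\}$ sits inside $T_{\bar x}S$. Hence the restriction to $S$ of the projection $\pi_1\col\Omega^k\ra\Omega$ onto the first factor, $\pi_1(y_1,\ld,y_k)=y_1$, has a differential that is surjective at every diagonal point: its derivative maps $(v,\ld,v)\mapsto v$. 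By the constant-rank / implicit function theorem, $\pi_1|_S$ is therefore a submersion in a neighborhood of each diagonal point, so it is an open map near the diagonal; in particular its image contains a neighborhood of every point of $\Omega$ (viewed inside $\Omega$ via $x\mapsto x$).

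Next I would localize. Fix a Lebesgue density point $x_0$ of $A$. Near the diagonal point $\bar x_0=(x_0,\ld,x_0)$, choose local coordinates on $S$ adapted to the submersion $\pi_1|_S$: by the submersion normal form there is a neighborhood $U\sle S$ of $\bar x_0$ and a diffeomorphism $U\cong V\times W$ with $V$ an open subset of $\R^d$ and $W$ an open subset of $\R^{s-d}$ (where $s=\dim S$), under which $\pi_1|_S$ becomes the projection onto $V$, and $V$ can be taken to be a small ball around $x_0$ contained in $\Omega$. Under this chart the Lebesgue ($s$-volume) measure $\sig$ on $U$ is mutually absolutely continuous with the product of Lebesgue measure on $V$ and Lebesgue measure on $W$, with a continuous, strictly positive Radon–Nikodym density (the Jacobian of a $C^1$ diffeomorphism). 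Therefore, by Fubini,
$$
\sig(A^k\cap S)\ \geq\ \sig\bigl((\pi_1|_S)^{-1}(A\cap V)\bigr)\ \gtrsim\ |A\cap V|\cdot|W|\ >\ 0,
$$
where the last inequality holds because $x_0$ is a density point of $A$ and $V$ is a neighborhood of $x_0$, so $|A\cap V|>0$; here $|\cdot|$ denotes Lebesgue measure of the appropriate dimension and the implied constant comes from bounding the positive Jacobian density below on the relatively compact piece of $U$ we work in. Note $(\pi_1|_S)^{-1}(A\cap V)\sle A^k\cap S$ since a point of $S$ whose first coordinate lies in $A$ need not have its other coordinates in $A$ — so this containment is NOT automatic, and this is the point that requires the diagonal hypothesis: it is precisely the presence of $\Del_{\Omega^k}\sle S$ that lets us instead argue as follows.

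Here is the correct way to close the last gap. Rather than pulling back $A$ through $\pi_1$, restrict attention to a neighborhood of the diagonal where, by continuity, all $k$ coordinate projections $\pi_j|_S$ are simultaneously submersions (their differentials at $\bar x_0$ all send $(v,\ld,v)\mapsto v$, hence are surjective, hence remain surjective nearby). Then near $\bar x_0$ the map $\Psi\col S\ra\Omega$, $\Psi(y_1,\ld,y_k)=y_1$, together with the observation that on the diagonal $y_1=\ld=y_k$, shows that the subset $\Del_S:=S\cap\Del_{\Omega^k}$ is itself a $d$-dimensional $C^1$-submanifold of $S$ on which $\pi_1$ is a diffeomorphism onto an open subset of $\Omega$. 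Take the chart $U\cong V\times W$ above so that the slice $W\ni w_0$ corresponding to the diagonal is a single point, i.e. $\Del_S\cap U$ maps to $V\times\{w_0\}$. Since $\Del_S\cap U=\{(y,\ld,y):y\in V'\}$ for an open $V'\sle\Omega$ containing $x_0$, and the $s$-volume near the diagonal is comparable to $d$-volume on $V'$ times $(s-d)$-volume transversally with positive continuous density, we get for the full intersection $A^k\cap S\sge\{(y,\ld,y):y\in A\cap V'\}$, and hence
$$
\sig(A^k\cap S)\ \geq\ \sig\bigl(\{(y,\ld,y):y\in A\cap V'\}\bigr)\ >\ 0,
$$
because $\{(y,\ld,y):y\in A\cap V'\}$ has positive $d$-dimensional measure inside the $d$-dimensional submanifold $\Del_S$, which in turn carries positive $\sig$-mass wherever it meets $U$ transversally with positive density — the density being bounded below on the compact piece we use.

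The main obstacle is making the last sentence rigorous: a $d$-dimensional submanifold of the $s$-dimensional $S$ has zero $\sig$-measure when $s>d$, so one cannot literally say "$\sig$ of the diagonal slice is positive." The fix, which I expect to be the technical heart of the argument, is to thicken: use the submersion $\pi_1|_S$ to write, on a coordinate neighborhood $U\cong V\times W$, the set $A^k\cap S$ as containing $(\pi_1|_S)^{-1}(A\cap V)\cap U'$ for a suitably small \emph{relatively compact} $U'=V''\times W''$ on which all other coordinates $y_2,\ld,y_k$ stay within $\Omega$ and, crucially, on which one controls how far $y_j$ can drift from $y_1$; but since the diagonal is contained in $S$ and $A$ has a density point at $x_0$, one can further shrink so that the set of $w\in W''$ for which the whole fiber point $(y_1(v,w),\ld,y_k(v,w))$ lies in $A^k$ has positive measure for a positive-measure set of $v\in A\cap V''$ — this is a double application of Lebesgue density / Fubini, using that the coordinates are $C^1$ and agree on the diagonal. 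Once that is set up, positivity of $\sig(A^k\cap S)$ follows from Fubini against the positive continuous Jacobian density. I would therefore organize the write-up as: (1) tangent computation on the diagonal; (2) simultaneous-submersion normal form near a diagonal point; (3) the density/Fubini thickening argument; (4) conclusion.
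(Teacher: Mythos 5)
Your geometric setup is sound and in fact parallels the paper's: at a diagonal point the tangent space of $S$ contains the diagonal subspace, so all $k$ coordinate projections $\pi_{j}|_{S}$ are submersions near $\Del_{\Omega^{k}}$, and near a density point $x_{0}$ of $A$ one can compare $\sig$ with a product Lebesgue measure in a chart (the paper makes the analogous comparison directly against the tangent plane $L$ to $S$ at $\bar x_{0}$). But your proposal stops exactly where the content of the lemma lies. The two concrete arguments you write down are, as you yourself note, false: $(\pi_{1}|_{S})^{-1}(A\cap V)$ is not contained in $A^{k}\cap S$, and the diagonal slice $\{(y,\ld,y):y\in A\cap V'\}$ has $\sig$-measure zero as soon as $\dim S>d$. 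What is left is your step (3), which you call ``the technical heart'' and support only with the phrase ``a double application of Lebesgue density / Fubini''. That is a restatement of what must be proved, not a proof, and the mechanism you indicate (fix $v\in A\cap V''$, vary $w$) does not by itself deliver it: for a fixed $v$ the slice maps $w\mapsto y_{j}(v,w)$ may send $W''$ into a Lebesgue-null subset of $\Omega$, so the density of $A$ at $x_{0}$ (or at $v$) gives no control of $\{w:y_{j}(v,w)\in A\}$; and even if, for each single $j$, Fubini gives a positive-measure set of good $w$ for a.e.\ $v$, these $k-1$ sets of $w$'s need not have a common point.

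The missing ingredient is quantitative and runs in the opposite direction: for every fixed $w$ near $w_{0}$, each map $v\mapsto y_{j}(v,w)$ is $C^{1}$-close to the identity (since $y_{j}(v,w_{0})=v$ and the derivatives are continuous), hence it maps a small cube $Q$ around $x_{0}$ into a comparable cube and distorts Lebesgue measure by a bounded factor; therefore $|\{v\in Q:y_{j}(v,w)\notin A\}|\le C\,|Q'\sm A|$ for a slightly larger cube $Q'$, and a union bound over $j=1,\ld,k$ shows that, once $Q$ is chosen so small that the relative measure of $Q'\sm A$ is below $1/(2kC)$ (possible because $x_{0}$ is a density point), the set of $(v,w)$ with all of $y_{1}(v,w),\ld,y_{k}(v,w)$ in $A$ has positive product measure; the positive Jacobian then converts this into $\sig(A^{k}\cap S)>0$. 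This union-bound-plus-distortion estimate is precisely what the paper's proof supplies: the comparison of $\sig$ near $\bar x_{0}$ with the Lebesgue measure $\lam$ on the tangent plane, the bound $\pi_{i}(\lam_{t})\le c_{i}\nu_{t}$, and the final estimate $\sig_{t}(A^{k}\cap P_{t}\cap S)\ge 1-\sum_{i}\sig_{t}(\pi_{i}^{-1}(Q_{t}\sm A)\cap S)>0$. Without stating and proving this step your write-up has the right scaffolding but does not yet prove the lemma; with it, your chart-based formulation would be a legitimate variant of the paper's tangent-plane argument.
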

\begin{proof}
Let $x$ be a density point of $A$.
For $t>0$ let $Q_{t}$ be the cube in $\R^{d}$ of size $t$ centered at $x$,
and let $P_{t}=Q_{t}^{k}$
(which is the cube in $\R^{dk}$ of size $t$ centered at $\bx=\svect{x\\\svd\\x}$).
Let $\pi_{i}$, $i=1,\ld,k$,
be the projection from $\R^{dk}=(\R^{d})^{k}$ onto the $i$th factor.
Since $S$ contains $\Del_{\Omega^{k}}$,
for any $i$,
$\pi_{i}$ projects $S$ onto $\Omega$ 
and has full rank at all points of $S$.

Let $s=\dim S$.
Let $L$ be the tangent space to $S$ at the point $\bx=\svect{x\\\svd\\x}$
and let $\lam$ be the Lebesgue measure (the $s$-volume) on $L$.
Let $1\leq i\leq k$.
If $t$ is small enough 
(so that, in particular, $Q_{2t}\subseteq\Omega$),
we have
\begin{equation}
\label{f-ineq1}
\sig\left(\pi_{i}^{-1}(B)\cap S\cap P_{t}\right)
\leq 2\lam\left(\pi_{i}^{-1}(B)\cap L\cap P_{2t}\right)
\end{equation}
and
\begin{equation}
\label{f-ineq2}
\lam\left(\pi_{i}^{-1}(B)\cap L\cap P_{t}\right)
\leq 2\sig\left(\pi_{i}^{-1}(B)\cap S\cap P_{2t}\right)
\end{equation}
for any measurable set $B\sle\Omega$.
Let $\sig_{t}$ and $\lam_{t}$
be the normalized Lebesgue measures
on $S\cap P_{t}$ and on $L\cap P_{t}$ respectively.
Then for $t$ small enough we have from (\ref{f-ineq2}) that
$\sig\left(S\cap P_{t}\right)
\geq\frac{1}{2}\lam\left(L\cap P_{t/2}\right)
=2^{-2s-1}\lam\left(L\cap P_{2t}\right)$,
and thus from (\ref{f-ineq1}),
\begin{equation}
\label{f-ineq3}
\sig_{t}(\pi_{i}^{-1}(B)\cap S\cap P_{t})
\leq 2^{2s+2}\lam_{2t}(\pi_{i}^{-1}(B)\cap L\cap P_{2t})
\end{equation}
for any measurable $B\sle\Omega$.

For $t>0$, let $\nu_{t}$ 
be the normalized Lebesgue measure on the cube $Q_{t}\subset\R^{d}$.
Since $L$ is an affine space passing through the center of $Q_{t}$,
and since, for each $i$, $L$ projects by $\pi_{i}$ {\sl onto} $\R^{d}$,
we have $\pi_{i}(\lam_{t})\leq c_{i}\nu_{t}$ 
with a constant $c_{i}$ independent on $t$.
Let $c=\max\{c_{1},\ld,c_{k}\}$,
then $\lam_{t}(\pi_{i}^{-1}(B)\cap L)\leq c\nu_{t}(B)$
for any measurable set $B\sle\R^{d}$ and all $i$.

Now choose $t$ small enough so that (\ref{f-ineq3}) holds for all $i$
and that $\nu_{2t}\left(Q_{2t}\sm A\right)<1/(2^{2s+2}kc)$. 
Then
\begin{multline*}
\sig_{t}\left(A^{k}\cap P_{t}\cap S\right)
\geq 1-\sum_{i=1}^{k}\sig_{t}\left(\pi_{i}^{-1}(Q_{t}\sm A)\cap S\right)
\geq 1-\sum_{i=1}^{k}2^{2s+2}\lam_{2t}\left(\pi_{i}^{-1}(Q_{2t}\sm A)\cap L\right)
\\
\geq 1-\sum_{i=1}^{k}2^{2s+2}c\nu_{2t}\left(Q_{2t}\sm A\right)>0,
\end{multline*}
and so $\sig(A^{k}\cap S)>0$.
\end{proof}

Hence, we are done if we prove that $\corb(\bx)\ni\bx$ for every $\bx\in\Del_{X^{r+1}}$. 
After considering the new nilmanifold $X^{r+1}$ and changing notation, 
Theorem~\ref{ThD} is now reduced to the following proposition.
\begin{proposition}
\label{P-DinY}
Let $X=G/\Gam$ be a nilmanifold
and let $g(n)=a_{1}^{p_{1}(n)}\ld a_{r}^{p_{r}(n)}$ 
be a polynomial sequence in $G$
such that the polynomials $p_{1},\ld,p_{r}$ are jointly intersective.
Then $\corb_{g}(x)\ni x$ for any $x\in X$.
\end{proposition}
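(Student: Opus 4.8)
The plan is to reduce the statement, via the structure theory of polynomial orbits in nilmanifolds, to a purely arithmetic fact about the distribution of the polynomial values $p_1(n),\ld,p_r(n)$ modulo a ``period'' attached to $X$. By the result of \cite{mpn} quoted above, the closure $Y=\corb_g(x\Gam)$ is a finite disjoint union of subnilmanifolds, and the sequence $g(n)\,x\Gam$ is equidistributed in $Y$ with respect to the measure $\mu'_Y$. Hence $\corb_g(x)\ni x$ is equivalent to saying that $x\Gam$ lies in the support of $\mu'_Y$, i.e.\ that $x\Gam$ is a limit point of $\{g(n)x\Gam\}_{n\in\Z^m}$. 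Since $g(0)=1_G$ (note $p_i(0)$ need not be $0$, but we may use the value at \emph{any} $n_0$ with all $p_i(n_0)$ divisible by a suitable $k$ — this is exactly where joint intersectivity enters), the point $x\Gam$ will be recovered as a limit if we can produce a sequence $n^{(j)}\to\infty$ in $\Z^m$ along which $g(n^{(j)})\to 1_G$ modulo $\Gam$ acting on $x$.

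The key reduction is to the abelianization. First I would pass to a maximal torus factor: the nilmanifold $X=G/\Gam$ has a factor $T=G/([G,G]\Gam)$ which is a finite-dimensional torus, and the translation on $X$ projects to a translation on $T$. The orbit closure $Y$ projects onto the orbit closure of the image in $T$, and $x\Gam\in\mathrm{supp}(\mu'_Y)$ can be checked ``one level at a time'' along the lower central series: this is the content of Section~\ref{S-Nil} (description of polynomial orbits on nilmanifolds), which reduces matters to the torus case of Section~\ref{S-Tor}. On the torus, $g(n)$ projected down becomes a vector-valued polynomial in $n$ with coefficients built from $\log a_1,\ld,\log a_r$, and the orbit closure is the closure of $\{(\sum_i p_i(n)\theta_i)\bmod \Z^d\}$ for suitable $\theta_i\in\R^d$. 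What must be shown is that $0$ is in this closure. If the $\theta_i$ together with $1$ are rationally independent this is immediate from Weyl; the general case requires splitting off the rational part of the $\theta_i$, and precisely there one needs: for every $k\in\N$ there is $n$ with $k\mid p_i(n)$ for all $i$ simultaneously — joint intersectivity — so that the rational contributions $\sum_i p_i(n)\theta_i$ can be driven into $\Z^d$ while the irrational part, by equidistribution, is driven arbitrarily close to $0$.

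The main obstacle, and the technical heart of the argument, is the inductive step up the nilpotency ladder: knowing that the projection of the orbit to $G/G_{j+1}$ returns near the base point, one must lift this to $G/G_j$, controlling the ``error'' that lives in the central-type quotient $G_j/G_{j+1}$ by choosing the approximating indices $n$ within an appropriate sub-progression (a sublattice of $\Z^m$ on which the lower-degree obstructions have already been killed). This is where the multiparameter polynomial machinery of \cite{mpn}, together with a careful bookkeeping of which residues mod $k$ are needed at each level, does the work; the arithmetic input of joint intersectivity must be invoked not once but level by level, applied to the (finitely many) polynomials that arise as coordinates of $g$ relative to a Malcev basis. Once the torus case and this lifting are in hand, Proposition~\ref{P-DinY} follows, and with it Theorem~\ref{ThD} by the reductions already carried out in this section.
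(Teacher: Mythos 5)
Your torus analysis is essentially Section~\ref{S-Tor} of the paper, with one imprecision worth flagging: joint intersectivity is needed there twice, not only to kill the rational part on a sublattice (Lemma~\ref{P-subjint}), but also for the irrational part, where what makes $0_{\T}$ lie in the orbit closure is that no integer combination of the polynomials is a nonzero constant (Lemma~\ref{P-torzer}, Corollary~\ref{P-intzer}); this is not ``immediate from Weyl'' when a single irrational multiplies several polynomial coordinates. The genuine gap, however, is in the nilmanifold half. The inductive scheme you propose --- climb the lower central series, lift a near-return from $G/G_{j+1}$ to $G/G_{j}$, and control the error in the central quotient $G_{j}/G_{j+1}$ by ``bookkeeping of residues mod $k$'' --- is exactly the step you never carry out, and as described it would not close: divisibility hypotheses control only congruence (rational) obstructions, whereas once the projected orbit merely comes \emph{close} to, rather than exactly hits, the base point, the error pushed into the central fibre is a metric quantity depending on how close one comes and on commutator correction terms, which residue bookkeeping cannot reach; nor does recurrence at each level separately combine into recurrence upstairs without a mechanism for choosing the return times compatibly. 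That lifting problem is the technical heart of the proposition, and in your write-up it is an assertion, not an argument.

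The paper avoids this difficulty by a different induction (Proposition~\ref{P-PDinY}): on $\dim G$, not on the nilpotency class. After conjugating so that $x=1_{X}$, and using Lemma~\ref{P-subjint} together with Proposition~\ref{P-polP} to reduce to the case where $X$ and then $G$ are connected, one projects $g$ to the maximal torus $\T=\lfa{X}{[G,G]}$. If the projection is dense in $\T$, Proposition~\ref{P-maxtor} gives at once that the orbit is dense in $X$, and there is nothing to lift. If not, Proposition~\ref{P-divzer} supplies a sublattice on which the polynomials remain jointly intersective and the projected orbit closure is a proper connected subtorus $S\ni 1_{\T}$; one then multiplies $g$ by a suitable polynomial sequence in $\Gam$ (which does not change the projection to $X$) so that the corrected sequence $h$ takes values in the proper closed connected subgroup $H=\eta^{-1}(L)$, with $L$ the identity component of the preimage of $S$, and Proposition~\ref{P-polP} guarantees that $h$ is a polynomial sequence in $H$ whose exponents lie in the ring generated by $p_{1},\ld,p_{r}$, hence still vanish mod $k$ at common times; induction on $\dim H<\dim G$ finishes. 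To repair your outline you would have to either supply the central-error control yourself (essentially redoing the equidistribution theory of \cite{mpn} level by level) or switch to this ``pass to a proper connected subgroup'' induction.
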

We will prove Proposition~\ref{P-DinY} in Section~\ref{S-Nil}
(see Proposition~\ref{P-PDinY}).
\section{Intersective polynomials and polynomial orbits on tori}
\label{S-Tor}

Given two integers $b$, $k$, 
we will write $b\dvd k$ if $k$ divides $b$.
We will use the term {\it lattice\/} 
for cosets of subgroups of finite index in $\Z^{m}$.
If $\Lam$ is a lattice of $\Z^{m}$,
then $\Lam$ is itself isomorphic to $\Z^{m}$,
and the notion of an integral polynomial on $\Lam$ is well defined.
(Clearly, integral polynomials on $\Lam$ 
are restrictions of polynomials on $\Z^{m}$ taking on integer values on $\Lam$.)
We will say that integral polynomials $p_{1},\ld,p_{r}$ on $\Lam$
are jointly intersective (on $\Lam$)
if for any $k\in\N$ there exists $n\in\Lam$ 
such that $p_{1}(n),\ld,p_{r}(n)\dvd k$.
\begin{lemma}
\label{P-subdiv}
If integral polynomials $p_{1},\ld,p_{r}$ on a lattice $\Lam$
are jointly intersective,
then for any sublattice $\Lam'$ of $\Lam$ there exists $l\in\Lam$
such that the polynomials $p_{1},\ld,p_{r}$ 
are jointly intersective on $\Lam'+l$.
\end{lemma}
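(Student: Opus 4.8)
The plan is to reformulate joint intersectivity in terms of the profinite completion $\widehat{\Z}=\varprojlim_N\Z/N\Z$ of $\Z$, and then merely locate a common profinite zero of $p_1,\ld,p_r$ inside one of finitely many cosets. First I would fix an identification of $\Lam$ with $\Z^m$ (by choosing an origin in $\Lam$), under which $p_1,\ld,p_r$ become integral polynomials on $\Z^m$ and $\Lam'$ becomes a coset $\Gam'+c$ of a finite-index subgroup $\Gam'\le\Z^m$, with $c\in\Z^m$. Fix $d\in\N$ with $d\Z^m\sle\Gam'$. I claim it suffices to produce $b\in\Z^m$ such that $p_1,\ld,p_r$ are jointly intersective on the coset $d\Z^m+b$: then, setting $l:=b-c\in\Z^m=\Lam$, one has $d\Z^m+b\sle\Gam'+b=\Lam'+l$, and joint intersectivity on a sub-coset of $\Lam'+l$ plainly forces joint intersectivity on $\Lam'+l$ itself.

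The key step is the following reformulation. As each $p_i$ extends continuously to $\widehat{\Z}^m$, the zero set $Z:=\{\omega\in\widehat{\Z}^m:p_1(\omega)=\ld=p_r(\omega)=0\}$ is a well-defined closed subset of $\widehat{\Z}^m$; I would show that, for every $b\in\Z^m$, the polynomials $p_1,\ld,p_r$ are jointly intersective on $d\Z^m+b$ if and only if $Z$ meets the compact open coset $C_b:=(d\widehat{\Z})^m+b$ of $(d\widehat{\Z})^m$ in $\widehat{\Z}^m$. For the ``if'' direction: given $\omega\in Z\cap C_b$ and $k\in\N$, reduce $\omega$ modulo $dk$ and lift to $n\in\Z^m$; then $n\equiv b\pmod d$, so $n\in d\Z^m+b$, while $p_i(n)\equiv p_i(\omega)\equiv0\pmod k$ for all $i$. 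For the ``only if'' direction: joint intersectivity on $d\Z^m+b$ says that for every $N$ divisible by $d$ the finite set $A_N^b:=\{\bar n\in(\Z/N\Z)^m:\bar n\equiv b\pmod d,\ p_i(\bar n)\equiv0\pmod N\ \forall i\}$ is nonempty, and the $A_N^b$ (over multiples $N$ of $d$, directed by divisibility, with the reduction maps) form an inverse system of nonempty finite sets, so $\varprojlim_N A_N^b=Z\cap C_b$ is nonempty. Alternatively one can dispense with $\widehat{\Z}$ entirely: for each $N$ the set of residues $b\bmod d$ admitting a configuration as above is a nonempty subset of the finite set $(\Z/d\Z)^m$, these sets are directed downward, hence have a common point, which serves as the required $b$.

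Granting the reformulation, the lemma is immediate: the hypothesis that $p_1,\ld,p_r$ are jointly intersective on $\Lam\cong\Z^m$ gives $Z\ne\emptyset$ (the sets $\{\bar n\in(\Z/N\Z)^m:p_i(\bar n)\equiv0\pmod N\ \forall i\}$ are nonempty for all $N$, and their inverse limit is $Z$); picking any $\omega\in Z$ and noting that the cosets $C_b$, $b\in\Z^m$, cover $\widehat{\Z}^m$, we get $\omega\in C_b$ for some $b$, whence $Z\cap C_b\ne\emptyset$ and $p_1,\ld,p_r$ are jointly intersective on $d\Z^m+b$, as needed. Thus the only genuine work is bookkeeping: getting the moduli right in the reformulation (taking $N$ simultaneously divisible by $d$ and large enough to evaluate all the $p_i(\bar n)$), checking that the reduction maps $A_{N'}^b\to A_N^b$ are well defined so that the inverse limit is nonempty, and matching a coset of $d\Z^m$ with a coset $\Lam'+l$ having $l\in\Lam$; the conceptual core — joint intersectivity is nonemptiness of a profinite zero set, which one need only catch inside one of finitely many cosets — is short.
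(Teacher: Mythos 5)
Your proposal is correct, but it packages the argument differently from the paper, so a comparison is worth making. The paper's proof is a three‑line pigeonhole: fix a finite set $L\subset\Lam$ with $\Lam'+L=\Lam$, for each $k$ record some $l_k\in L$ such that a witness of divisibility by $k$ lies in $\Lam'+l_k$, choose $l$ with $l_{k!}=l$ for infinitely many $k$, and use $k\mid k_0!$ for $k_0\geq k$ to conclude that $\Lam'+l$ works for every modulus. Your profinite formulation — joint intersectivity is nonemptiness of the zero set $Z\subseteq\widehat{\Z}^m$ of the continuous extensions of $p_1,\ld,p_r$, and one only has to catch a point of $Z$ in one of the finitely many cosets of $(d\widehat{\Z})^m$ — is a compactness repackaging of exactly this finite‑pigeonhole‑plus‑divisibility‑directedness idea; indeed your ``alternative'' finitary route (the nonempty, downward‑directed sets of good residues in the finite set $(\Z/d\Z)^m$ have a common element) is essentially the paper's argument with the cofinal sequence $k!$ replaced by directedness. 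What your leading route buys is economy of verification: for the lemma you only need the easy implication (a point $\omega\in Z\cap C_b$ produces, for every $k$, an integer witness in $d\Z^m+b$ by density of $\Z^m$ in $\widehat{\Z}^m$), together with $Z\neq\emptyset$, so the ``only if'' direction with its inverse system $A_N^b$ can be discarded — which is just as well, since as written $p_i(\bar n)\bmod N$ is not defined on $(\Z/N\Z)^m$ (the $p_i$ are integer‑valued but need not have integer coefficients), and the correct fix is to work modulo $DN$ with $D$ a common denominator rather than merely taking $N$ ``large''; the same observation is what guarantees the continuous extension of the $p_i$ to $\widehat{\Z}^m$, and you flag it as the bookkeeping point. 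Your preliminary reduction ($d\Z^m+b\subseteq\Gam'+b=\Lam'+l$ with $l=b-c$, and joint intersectivity passing from a subset to a superset) is sound and is, if anything, more explicit about the affine/coset structure than the paper's choice of $L$.
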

\begin{proof} 
Let $L\sln\Lam$ be a finite set such that $\Lam'+L=\Lam$.
For any $k\in\N$ there exists $l_{k}\in L$
such that $p_{i}(n+l_{k})\dvd k$, $i=1,\ld,r$, for some $n\in\Lam'$.
Let $l$ be such that $l_{k!}=l$ for infinitely many $k$.
Then for any $k\in\N$ there exists $k_{0}>k$ such that $l_{k_{0}!}=l$,
and thus there exists $n\in\Lam'$ 
such that $p_{i}(n+l)\dvd k_{0}!\dvd k$, $i=1,\ld,r$.
\end{proof}
\begin{lemma}
\label{P-subjint}
Let integral polynomials $p_{1},\ld,p_{r}$ on a lattice $\Lam$
be jointly intersective.
For any $k\in\N$ there exists a lattice $\Lam'\sle\Lam$
such that $p_{1},\ld,p_{r}$ are jointly intersective on $\Lam'$
and $p_{1}(n),\ld,p_{r}(n)\dvd k$ for all $n\in\Lam'$.
\end{lemma}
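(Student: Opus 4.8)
The plan is to combine Lemma~\ref{P-subdiv} with an iterative refinement that, for a single prime power, forces all the polynomials to take values divisible by that prime power on a suitable sublattice, and then to multiply together the sublattices obtained for the distinct prime powers dividing $k$. The key observation is that joint intersectivity is stable under passage to a sublattice of the form produced by Lemma~\ref{P-subdiv}, so we can afford to intersect finitely many times.

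Here is how I would carry it out. First, reduce to the case $k=p^{a}$ a prime power: if $k=\prod_{j}p_{j}^{a_{j}}$ and we have lattices $\Lam_{j}\sle\Lam$ on which $p_{1},\ld,p_{r}$ are jointly intersective and $p_{i}(n)\dvd p_{j}^{a_{j}}$ for all $n\in\Lam_{j}$, then I claim $\Lam':=\bigcap_{j}\Lam_{j}$ works: it is a lattice, every $p_{i}(n)$ for $n\in\Lam'$ is divisible by each $p_{j}^{a_{j}}$ hence by $k$, and joint intersectivity on $\Lam'$ follows because $\Lam'$ still contains, for each modulus, a common divisibility point --- but this last point is itself exactly the content of the prime-power case applied inside $\Lam'$, so the intersection must be organized carefully (build the $\Lam_{j}$ recursively, each a sublattice of the previous intersection, using Lemma~\ref{P-subdiv} to preserve joint intersectivity at each stage). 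Second, for the prime-power case $k=p^{a}$: since $p_{1},\ld,p_{r}$ are jointly intersective on $\Lam$, pick $n_{0}\in\Lam$ with $p_{i}(n_{0})\dvd p^{a}$ for all $i$. Consider the sublattice $\Lam_{0}=p^{a}\Lam+n_{0}$ (or a suitable sublattice through $n_{0}$); expanding $p_{i}(n_{0}+p^{a}m)$ in a Taylor/finite-difference expansion, each term beyond the constant $p_{i}(n_{0})$ carries a factor $p^{a}$ from the increment, so $p_{i}(n)\dvd p^{a}$ for all $n\in\Lam_{0}$. Then apply Lemma~\ref{P-subdiv} with $\Lam'=\Lam_{0}$ to produce $l\in\Lam$ such that $p_{1},\ld,p_{r}$ are jointly intersective on $\Lam_{0}+l$; note $\Lam_{0}+l$ is a coset of the same subgroup $p^{a}\Lam$, so it is again of the form "constant $\equiv n_{0}+l \pmod{p^{a}\Lam}$", and we check that the divisibility $p_{i}(n)\dvd p^{a}$ persists on $\Lam_{0}+l$ --- here one has to be slightly careful because translating by $l$ changes the constant term, so I would instead start the construction from $n_{0}+l$ rather than $n_{0}$, i.e. first apply Lemma~\ref{P-subdiv} to a generic sublattice to locate a good residue $l$, then build $p^{a}\Lam+l$ and verify both properties simultaneously.

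Writing out the second step cleanly is the main obstacle: one must ensure that the lattice $\Lam'$ one ends up with simultaneously (a) has all $p_{i}$ divisible by $k$ on it and (b) retains joint intersectivity, and these two demands interact, since Lemma~\ref{P-subdiv} only guarantees (b) on \emph{some} translate of a prescribed sublattice, not on the sublattice one first wrote down. The fix is to interleave the two steps correctly: given $k$, first choose via Lemma~\ref{P-subdiv} (applied to the sublattice $N!\,\Lam$ for $N$ large, say $N=k$) a coset $\Lam_{1}$ of a finite-index subgroup on which the $p_{i}$ remain jointly intersective; then inside $\Lam_{1}$ repeat, shrinking the subgroup so that the modulus $k$ divides all increments. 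Because $\Lam_{1}$ is isomorphic to $\Z^{m}$ and the $p_{i}$ restricted to it are again integral and jointly intersective, I can relabel and assume $\Lam=\Z^{m}$, pick $n_{0}$ with $p_{i}(n_{0})\dvd k$, set $\Lam'=k!\,\Z^{m}+n_{0}$, and then use the finite-difference expansion: for $n=n_{0}+k!\,m$,
\[
p_{i}(n)=p_{i}(n_{0})+\sum_{|\alpha|\geq1}\binom{k!\,m}{\alpha}\,\Delta^{\alpha}p_{i}(n_{0}),
\]
where each coefficient $\binom{k!\,m}{\alpha}$ is divisible by $k$ (since $k\mid k!$ divides the increment in each variable and the binomial coefficients are integers), so $p_{i}(n)\equiv p_{i}(n_{0})\equiv0\pmod{k}$. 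Finally, apply Lemma~\ref{P-subdiv} once more to the sublattice $\Lam'-n_{0}\subseteq\Z^{m}$ shifted appropriately --- or, better, simply observe that since $\Lam'$ is a translate of $k!\,\Z^{m}$ chosen through a point where all $p_{i}$ vanish mod $k$, and since joint intersectivity was already arranged on the ambient coset by the first application of Lemma~\ref{P-subdiv}, one extra invocation of Lemma~\ref{P-subdiv} restores joint intersectivity on a translate of $\Lam'$ on which the divisibility still holds (same subgroup $k!\,\Z^{m}$, and translating the base point within a fixed residue class mod $k$ preserves $p_{i}\equiv0\pmod k$ provided the translate is chosen within that class --- which is exactly what Lemma~\ref{P-subdiv} delivers when we feed it the sublattice $k!\,\Z^{m}$, whose cosets refine the mod-$k$ residue classes).
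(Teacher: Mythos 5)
There is a genuine gap, and it is arithmetic: your divisibility-propagation claims ignore the fact that the $p_{i}$ are only \emph{integral} polynomials (rational coefficients), not polynomials with integer coefficients. It is false that $\binom{k!\,m}{\alpha}$ is divisible by $k$ for all $|\alpha|\geq 1$ (take $k=2$, $m=1$, $\alpha=2$: $\binom{2}{2}=1$), and consequently false that an integral polynomial is constant mod $k$ on cosets of $k!\,\Z^{m}$ (or of $p^{a}\Lam$): for $p(n)=n(n-1)(n-2)/6$ and $k=2$ one has $p(1)=0$ but $p(3)=1$, so divisibility by $2$ at $n_{0}=1$ does not propagate along $n_{0}+k!\,\Z$. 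This breaks your prime-power step (the lattice $p^{a}\Lam+n_{0}$), your choice $\Lam'=k!\,\Z^{m}+n_{0}$, and the closing assertion that translating the base point within a fixed residue class mod $k$ preserves $p_{i}\equiv 0\pmod{k}$ (same phenomenon: $\binom{n}{2}$ takes values $0$ and $1$ at $n=0$ and $n=2$). The step of the sublattice must absorb the denominators: one must work with cosets of $kd\Lam$, where $d\in\N$ is chosen so that $dp_{1},\ld,dp_{r}$ have integer coefficients; then $q_{i}(u)=p_{i}(u+x_{0})-p_{i}(x_{0})$ has zero constant term and coefficients in $\frac{1}{d}\Z$, so its value at any point of $kd\Lam$ is divisible by $k$, i.e.\ $p_{i}\bmod k$ really is constant on cosets of $kd\Lam$.

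Once the step is corrected, the right organization is the one you gesture at but never pin down, and it needs only one application of Lemma~\ref{P-subdiv}, with no prime-power reduction and no ``extra invocation'': apply Lemma~\ref{P-subdiv} to the subgroup $kd\Lam$ to obtain $l\in\Lam$ such that $p_{1},\ld,p_{r}$ are jointly intersective on $\Lam'=kd\Lam+l$; joint intersectivity \emph{on this coset}, used with the modulus $k$, produces a point $kdn_{0}+l\in\Lam'$ at which all $p_{i}$ are divisible by $k$; and by the congruence above, divisibility by $k$ then holds at every point of $\Lam'$. Your version instead fixes $n_{0}$ first, builds a sublattice through it, and then calls Lemma~\ref{P-subdiv} a second time to recover intersectivity on some translate -- at which point divisibility on that new translate is exactly what the false mod-$k$ invariance claim was supposed to supply. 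The circularity you flag in your own write-up is real, and it is resolved not by an extra appeal to Lemma~\ref{P-subdiv} but by choosing the coset first and locating $n_{0}$ inside it; this also makes the reduction to prime powers and the intersection of the lattices $\Lam_{j}$ unnecessary.
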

\begin{proof}
Let $d\in\N$ be such that $dp_{1},\ld,dp_{r}$ have integer coefficients.
By Lemma~\ref{P-subdiv}, 
there exists $l\in\Lam$ such that $p_{1},\ld,p_{r}$
are jointly intersective on $\Lam'=kd\Lam+l$.
There exists $n_{0}\in\Lam$ such that $p_{i}(kdn_{0}+l)\dvd k$, $i=1,\ld,r$.
For any $n\in\Lam$ and every $i$ we have
$p_{i}(kdn+l)=p_{i}(kdn_{0}+l)+q_{i}(kd(n-n_{0}))$
where $q_{i}$ is an integral polynomial with coefficients in $\frac{1}{d}\Z$ 
and zero constant term.
Hence, $q_{i}(kd(n-n_{0}))\dvd k$, $i=1,\ld,r$,
and so $p_{i}(kdn+l)\dvd k$, $i=1,\ld,r$, for all $n$.
\end{proof}

Let $\T$ be an (additive) torus.
{\it A polynomial sequence\/} in $\T$ 
is a (multiparameter) sequence of the form
$t(n)=\sum_{i=1}^{r}p_{i}(n)v_{i}$, $n\in\Z^{m}$,
where $p_{i}$ are integral polynomials on $\Z^{m}$
and $v_{i}\in\T$, $i=1,\ld,r$.
It is well known (see \cite{Weyl})
that if $t$ is a polynomial sequence in $\T$,
then the closure $S=\overline{\{t(n)\}}_{n\in\Lam}$ of $t$ is a connected component,
or a union of several connected components,
of a coset $u+N$ for some closed subgroup $N$ of $\T$ and an element $u\in\T$.
In particular, if $S$ is connected, it is a subtorus of $\T$.
After choosing coordinates in $\T$
we identify $\T$ with a standard torus $\R^{s}/\Z^{s}$, $s\in\N$.
Then any polynomial sequence
$t(n)=\sum_{i=1}^{r}p_{i}(n)v_{i}$ in $\T$
can be written in the form
\begin{equation}
\label{f-polt}
t(n)=\left[\svect{q_{0,1}(n)\\\svd\\q_{0,s}(n)}{\txt\frac{1}{k}}+
\sum_{i=1}^{l}\svect{q_{i,1}(n)\\\svd\\q_{i,s}(n)}\alf_{i}\right]\mod\Z^{s},
\end{equation}
where $1,\alf_{1},\ld,\alf_{l}\in\R$ are rationally independent,
$k\in\N$,
and the polynomials $q_{i,j}$
are linear combinations, with integer coefficients,
of the polynomials $p_{1},\ld,p_{r}$.

We first take care of the ``irrational'' part of $t$.
For any polynomial $q$ let $\hq$ denote the polynomial $q-q(0)$.
\begin{lemma}
\label{P-irrorb}
{\rm(i)}
Let $t(n)=\svect{q_{1}(n)\\\svd\\q_{s}(n)}\alf\mod\Z^{s}$
where $\alf\in\R$ is irrational 
and $q_{1},\ld,q_{s}$ are integral polynomials on a lattice $\Lam$.
Then $\overline{\{t(n)\}}_{n\in\Lam}$ is the (connected) subtorus
$\left[\svect{q_{1}(0)\\\svd\\q_{s}(0)}\alf+
\Rspan\left\{\svect{\hq_{1}(n)\\\svd\\\hq_{s}(n)},\ n\in\Lam\right\}\right]
\mod\Z^{s}$ of\/ $\T$.\\
{\rm(ii)}
Let $b_{i}(n)=\svect{q_{i,1}(n)\\\svd\\q_{i,s}(n)}\mod\Z^{s}$
and $t_{i}=b_{i}\alf_{i}$, $i=1,\ld,l$,
where $1,\alf_{1},\ld,\alf_{l}\in\R$ are rationally independent 
and $q_{i,j}$ are integral polynomials on a lattice $\Lam$.
Let $t=\sum_{i=1}^{l}t_{i}$;
then $\overline{\{t(n)\}}_{n\in\Lam}=\sum_{i=1}^{l}\overline{\{t_{i}(n)\}}_{n\in\Lam}$.
In particular, $\overline{\{t(n)\}}_{n\in\Lam}$ 
is a (connected) subtorus of\/ $\T$.
\end{lemma}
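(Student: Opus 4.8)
\emph{Proof sketch (plan).}
The plan is to treat both assertions as manifestations of Weyl's equidistribution theorem. For a F\o lner sequence $(\Phi_{N})$ in $\Lam\cong\Z^{m}$ I would look at the empirical measures $\mu_{N}=\frac1{|\Phi_{N}|}\sum_{n\in\Phi_{N}}\delta_{t(n)}$ on $\T$, show that $\mu_{N}$ converges weak-$*$ to the Haar measure of the torus coset named in the statement, and conclude that $\overline{\{t(n)\}}_{n\in\Lam}$ is the support of that measure. A preliminary reduction removes constant terms: writing $q_{i}=q_{i}(0)+\hq_{i}$ (and, in (ii), $q_{i,j}=q_{i,j}(0)+\hq_{i,j}$) one has $t=v+s$, with $v\in\T$ the value of $t$ at $0$ and $s$ the polynomial sequence built from the $\hq$'s; since translation by $v$ is a homeomorphism of $\T$, $\overline{\{t(n)\}}=v+\overline{\{s(n)\}}$, and it suffices to identify $\overline{\{s(n)\}}_{n\in\Lam}$ with $W:=\Rspan\bigl\{\bigl(\hq_{1}(n),\ld,\hq_{s}(n)\bigr):n\in\Lam\bigr\}\bmod\Z^{s}$ in part (i), and with $\sum_{i=1}^{l}\overline{\{s_{i}(n)\}}_{n\in\Lam}$ in part (ii), where $s=\sum_{i}s_{i}$ and $s_{i}(n)=\alf_{i}\,\vec q_{i}(n)$ with $\vec q_{i}(n)=\bigl(\hq_{i,1}(n),\ld,\hq_{i,s}(n)\bigr)$.

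For part (i), put $\vec q(n)=\bigl(\hq_{1}(n),\ld,\hq_{s}(n)\bigr)\in\Z^{s}$ and $V=\Rspan\{\vec q(n):n\in\Lam\}$. Since $V$ is spanned by integer vectors it is a rational subspace, $V\cap\Z^{s}$ is a lattice in it, and $W=V/(V\cap\Z^{s})$ is a subtorus of $\T$; as $s(n)=\alf\,\vec q(n)\in V$ for every $n$, the inclusion $\overline{\{s(n)\}}\sle W$ is immediate. For the reverse inclusion I would compute, for each character $\chi_{\xi}$ of $\T$ ($\xi\in\Z^{s}$), that $\chi_{\xi}(s(n))=e^{2\pi i\alf Q_{\xi}(n)}$, where $Q_{\xi}(n)=\langle\xi,\vec q(n)\rangle$ is an integer-valued polynomial on $\Lam$ with $Q_{\xi}(0)=0$. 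If $\xi$ is orthogonal to $V$ then $Q_{\xi}\equiv0$ and $\int\chi_{\xi}\,d\mu_{N}=1$; otherwise $Q_{\xi}$ is non-constant, so $\alf Q_{\xi}$ has an irrational coefficient of positive degree, and Weyl's theorem gives $\int\chi_{\xi}\,d\mu_{N}\to0$. Thus the Fourier coefficients of $\mu_{N}$ converge to those of the Haar measure $m_{W}$ of $W$; since trigonometric polynomials are dense in $C(\T)$ this yields $\mu_{N}\to m_{W}$ weak-$*$, and therefore $W=\operatorname{supp}m_{W}\sle\overline{\{s(n)\}}$, completing (i). (In particular, the orbit is equidistributed in $W$ along every F\o lner sequence.)

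For part (ii), I would first prove the joint statement $\overline{\{\widetilde s(n)\}}_{n\in\Lam}=\prod_{i=1}^{l}\overline{\{s_{i}(n)\}}_{n\in\Lam}$ in $\T^{l}$, where $\widetilde s(n)=\bigl(s_{1}(n),\ld,s_{l}(n)\bigr)$; applying the continuous summation map $\T^{l}\to\T$, which carries the compact set $\overline{\{\widetilde s(n)\}}$ onto $\overline{\{s(n)\}}$, then gives $\overline{\{s(n)\}}=\sum_{i}\overline{\{s_{i}(n)\}}$. The inclusion $\overline{\{\widetilde s(n)\}}\sle\prod_{i}\overline{\{s_{i}(n)\}}$ is clear, and for the reverse one repeats the Fourier computation on $\T^{l}$: a character $\chi=\chi_{\xi_{1},\ld,\xi_{l}}$ satisfies $\chi(\widetilde s(n))=\exp\bigl(2\pi i\sum_{i}\alf_{i}\langle\xi_{i},\vec q_{i}(n)\rangle\bigr)$, with each $Q_{i}(n)=\langle\xi_{i},\vec q_{i}(n)\rangle$ integer-valued and $Q_{i}(0)=0$. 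If all $Q_{i}\equiv0$ then $\chi$ is trivial on $\prod_{i}\overline{\{s_{i}(n)\}}$ (which by part~(i) is a product of subtori) and the averages equal $1$; if some $Q_{i}\not\equiv0$ then, because $1,\alf_{1},\ld,\alf_{l}$ are rationally independent, $\sum_{i}\alf_{i}Q_{i}$ is non-constant and each of its positive-degree coefficients $\sum_{i}c_{i}\alf_{i}$ (with integers $c_{i}$ not all zero) is irrational, so Weyl's theorem again forces the averages to $0$. Hence the empirical measures on $\T^{l}$ converge weak-$*$ to the Haar measure of $\prod_{i}\overline{\{s_{i}(n)\}}$, whose support is the whole product, and the desired equality follows.

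The only genuine work is in these equidistribution inputs: in (i), that $\langle\xi,\vec q(n)\rangle\not\equiv0$ forces $\alf\langle\xi,\vec q(n)\rangle$ to have an irrational positive-degree coefficient and hence (via the multiparameter Weyl theorem) to be equidistributed mod $1$ along every F\o lner sequence; and in (ii), checking that rational independence of $1,\alf_{1},\ld,\alf_{l}$ keeps all the positive-degree coefficients of $\sum_{i}\alf_{i}Q_{i}$ irrational whenever some $Q_{i}\not\equiv0$, i.e.\ that there are no hidden resonances among the $\alf_{i}$. Everything else is routine manipulation of closures, Haar measures, continuous images, and supports.
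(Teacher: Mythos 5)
Your argument is correct, and it reaches the conclusion by a somewhat different mechanism than the paper. The arithmetic core is the same in both: for a character $\chi_{\xi}$ one looks at $\langle\xi,\vec q(n)\rangle\alf$ (resp.\ $\sum_{i}\alf_{i}\langle\xi_{i},\vec q_{i}(n)\rangle$) and uses irrationality of $\alf$ (resp.\ rational independence of $1,\alf_{1},\ld,\alf_{l}$) to see that the character is either identically trivial on the sequence or ``genuinely nontrivial''. The difference is in how density is then extracted. The paper argues softly: it quotes (at the start of Section~\ref{S-Tor}) the structure theorem for closures of polynomial sequences on tori, so that the orbit closure is automatically a union of components of a coset of a closed subgroup, and then the computation ``every character vanishing on the sequence vanishes on $S$'' (and, in (ii), ``vanishes on $N=\sum_{i}\overline{\{t_{i}(n)\}}$'') immediately forces $\overline{\{t(n)\}}=S$, resp.\ $=N$, with no equidistribution estimates; part (ii) is done by comparing $S$ and $N$ directly inside $\T$. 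You instead prove full equidistribution: weak-$*$ convergence of the empirical measures along every F{\o}lner sequence to the Haar measure of $W$, and in (ii) of the product $\prod_{i}W_{i}$ inside $\T^{l}$, followed by pushing forward under the addition map. This buys a stronger conclusion (well-distribution, not just density) at the price of invoking the uniform multiparameter Weyl theorem along arbitrary F{\o}lner sequences; that input is standard (van der Corput reduction to nontrivial characters of $\Z^{m}$, whose F{\o}lner averages vanish) and is consistent with the $\Clim$ framework the paper itself uses, so it is a legitimate black box here. Two small points you should make explicit: when $\Lam$ is a coset not containing $0$, ``$Q_{\xi}\not\equiv0$ on $\Lam$ implies $Q_{\xi}$ nonconstant on $\Lam$'' needs the (routine) remark that a polynomial constant on a full-rank lattice coset is constant, together with $Q_{\xi}(0)=0$; and in (ii) the coefficients $c_{i}$ may only be rational rather than integral (the constant terms $q_{i,j}(0)$ need not be integers), which changes nothing since rational independence of $1,\alf_{1},\ld,\alf_{l}$ still applies after clearing denominators. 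Finally, as in the paper, the parenthetical ``is a (connected) subtorus'' in (ii) is to be read after the reduction to zero constant terms; your translation bookkeeping gives the stated equality of closures, which is what is used later.
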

\begin{proof}
(i)
We may assume that $q_{j}(0)=0$, $j=1,\ld,s$.
Let $\tS=\Rspan\left\{\svect{q_{1}(n)\\\svd\\q_{s}(n)},\ n\in\Lam\right\}\sle\R^{s}$
and $S=\tS\mod\Z^{s}$;
since the vectors $\svect{q_{1}(n)\\\svd\\q_{s}(n)}$ are rational, 
$S$ is closed in $\T$.
Hence $S$ is a subtorus and we have $\overline{\{t(n)\}}_{n\in\Lam}\sle S$.
On the other hand,
consider an additive character $\chi$ on $\T$,
$\chi\svect{v_{1}\\\svd\\v_{s}}=c_{1}v_{1}+\ld+c_{s}v_{s}\mod1$ 
with $c_{1},\ld,c_{s}\in\Z$;
if $\chi(t(n))=0$ for all $n\in\Lam$,
then $(c_{1}q_{1}(n)+\ld+c_{s}q_{s}(n))\alf\in\Z$ for all $n\in\Lam$,
so $c_{1}q_{1}(n)+\ld+c_{s}q_{s}(n)=0$ for all $n\in\Lam$,
so $\chi\rest{S}=0$.
Hence, the sequence $\{t(n)\}_{n\in\Lam}$ is not contained
in any proper closed subgroup of $S$,
and thus, is dense in $S$.
\par\noindent
(ii)
Again, we may assume that $q_{i,j}(0)=0$ for all $i,j$.
By (i), $\overline{\{t_{i}(n)\}}_{n\in\Lam}$, $i=1,\ld,l$,
are connected subgroups of $\T$,
and such is $N=\sum_{i=1}^{l}\overline{\{t_{i}(n)\}}_{n\in\Lam}$.
Let $S=\sum_{i=1}^{l}\overline{\{t_{i}(n)\}}_{n\in\Lam}$;
clearly, $S\sle N$.
We have $S\ni 0_{\T}$,
thus $S$ a union of connected components of a closed subgroup of $N$.

Let $\chi$ be a character on $\T$,
$\chi\svect{v_{1}\\\svd\\v_{s}}=c_{1}v_{1}+\ld+c_{s}v_{s}\mod1$ 
with $c_{1},\ld,c_{s}\in\Z$,
and let $\phi$ be the corresponding linear function on $\R^{s}$,
$\phi\svect{v_{1}\\\svd\\v_{s}}=c_{1}v_{1}+\ld+c_{s}v_{s}$. 
Then $\chi(t(n))=0$, $n\in\Lam$,
iff $\sum_{i=1}^{l}\phi(b_{i}(n))\alf_{i}=0\mod 1$, $n\in\Lam$,
which, because of the independence of $\alf_{1},\ld,\alf_{l}$ and $1$,
is equivalent to $\phi(b_{i}(n))=0$ and so, $\chi(t_{i}(n))=0$, $n\in\Lam$, 
for all $i=1,\ld,l$.
Hence, any character vanishing on $S$ also vanishes on $N$,
and so, $S$ is not contained in any proper closed subgroup of $N$.
Thus, $S=N$.
\end{proof}
\begin{lemma}
\label{P-torzer}
Let $t(n)=\svect{q_{1}(n)\\\svd\\q_{s}(n)}\alf\mod\Z^{s}$
where $\alf\in\R$ is irrational 
and $q_{1},\ld,q_{s}$ are integral polynomials on a lattice $\Lam$.
Then $\overline{\{t(n)\}}_{n\in\Lam}\ni 0_{\T}$
iff no linear combination of $q_{1},\ld,q_{s}$ 
is a nonzero constant.
\end{lemma}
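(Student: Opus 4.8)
The plan is to use Lemma~\ref{P-irrorb}(i), which identifies $\overline{\{t(n)\}}_{n\in\Lam}$ as the subtorus $\bigl[\svect{q_{1}(0)\\\svd\\q_{s}(0)}\alf+\Rspan\{\svect{\hq_{1}(n)\\\svd\\\hq_{s}(n)},\ n\in\Lam\}\bigr]\mod\Z^{s}$. Since this is a connected subgroup translated by the fixed vector $w=\svect{q_{1}(0)\\\svd\\q_{s}(0)}\alf$, the closure contains $0_{\T}$ if and only if $w$ lies in the subtorus $V=\Rspan\{\svect{\hq_{1}(n)\\\svd\\\hq_{s}(n)},\ n\in\Lam\}\mod\Z^{s}$; equivalently, $w\in \tilde V+\Z^{s}$ where $\tilde V=\Rspan\{\svect{\hq_{1}(n)\\\svd\\\hq_{s}(n)},\ n\in\Lam\}\sle\R^{s}$ is the rational subspace spanned by the ``homogeneous parts'' evaluated along $\Lam$.

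First I would translate this membership condition into the language of characters. A closed subgroup is determined by the characters vanishing on it; so $0_{\T}\in\overline{\{t(n)\}}$ fails precisely when there is an integer character $\chi\svect{v_{1}\\\svd\\v_{s}}=\sum_{j}c_{j}v_{j}\mod 1$ that vanishes identically on $\overline{\{t(n)\}}$ but has $\chi(0_{\T})\neq 0$ — which is absurd since $\chi(0_{\T})=0$ always. So the right reformulation is: $0_{\T}$ lies in the closure iff every character vanishing on the homogeneous closure $\overline{\{t(n)-t(0)\}}_{n\in\Lam}=V$ also ``absorbs'' $w$, i.e. takes $w$ into $\Z$. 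Concretely, for integer vectors $(c_{1},\ld,c_{s})$, the condition ``$\chi$ vanishes on $V$'' means $\sum_{j}c_{j}\hq_{j}\equiv 0$ on $\Lam$ (using irrationality of $\alf$ exactly as in the proof of Lemma~\ref{P-irrorb}(i): $(\sum_j c_j\hq_j(n))\alf\in\Z$ for all $n$ forces $\sum_j c_j\hq_j(n)=0$). Then $\chi(w)=(\sum_{j}c_{j}q_{j}(0))\alf\mod 1$, and since $\alf$ is irrational this is $0$ in $\T$ iff $\sum_{j}c_{j}q_{j}(0)=0$.

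Putting the pieces together: $0_{\T}\notin\overline{\{t(n)\}}_{n\in\Lam}$ if and only if there exist integers $c_{1},\ld,c_{s}$ with $\sum_{j}c_{j}\hq_{j}\equiv 0$ on $\Lam$ but $\sum_{j}c_{j}q_{j}(0)\neq 0$. The polynomial $q=\sum_{j}c_{j}q_{j}$ then satisfies $\hat q=q-q(0)\equiv 0$ on $\Lam$ — so $q$ is constant on $\Lam$, hence (being a polynomial) $\hat q$ vanishes identically, so $q$ is a genuine constant polynomial — and $q(0)\neq 0$, so $q$ is a \emph{nonzero} constant. Conversely, if some linear combination $q=\sum_j c_j q_j$ with integer coefficients (one may clear denominators to make the $c_j$ integral, since rational dependence over $\Q$ is the same as over $\Z$ after scaling) is a nonzero constant, then $c=(c_1,\ld,c_s)$ gives a character with $\sum_j c_j\hq_j=\hat q=0$ on $\Lam$ but $\sum_j c_j q_j(0)=q(0)\neq 0$, obstructing $0_{\T}$.

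The only genuine subtlety — and the step I would be most careful about — is the passage between linear combinations with \emph{real} versus \emph{rational} versus \emph{integer} coefficients. The statement of the lemma says ``no linear combination of $q_{1},\ld,q_{s}$ is a nonzero constant''; since the $q_j$ have rational coefficients, a real linear combination equal to a constant can be rescaled and recombined to a rational, then integral, one with the same property, so the three notions agree here and match the integer characters $c_j\in\Z$ used above. I would spell this out in a sentence. Everything else is a direct application of Lemma~\ref{P-irrorb}(i) together with the character/irrationality argument already rehearsed in its proof, so the write-up should be short.
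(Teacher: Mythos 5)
Your proof is correct and takes essentially the same route as the paper: both apply Lemma~\ref{P-irrorb}(i) and then test membership of the constant-term vector in the relevant (rational) span by duality. The only cosmetic difference is that the paper phrases the duality via real linear functionals on $\R^{s}$, deferring the real/rational/integer coefficient translation to Corollary~\ref{P-intzer}, whereas you work with integer characters of the torus and handle that translation, together with the irrationality of $\alf$, explicitly inside the proof.
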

\begin{proof}
By Lemma~\ref{P-irrorb}(i),
$\overline{\{t(n)\}}_{n\in\Lam}\ni 0_{\T}$ iff
$\svect{q_{1}(0)\\\svd\\q_{s}(0)}\in
\Rspan\left\{\svect{\hq_{1}(n)\\\svd\\\hq_{s}(n)},\ n\in\Lam\right\}$.
This is so iff any linear function on $\R^{s}$ vanishing on 
$\Rspan\left\{\svect{\hq_{1}(n)\\\svd\\\hq_{s}(n)},\ n\in\Lam\right\}$
vanishes at $\svect{q_{1}(0)\\\svd\\q_{s}(0)}$ as well.
This is equivalent to saying that 
if $\sum_{i=1}^{s}c_{i}\hq_{i}=0$, with $c_{1},\ld,c_{s}\in\R$,
then also $\sum_{i=1}^{s}c_{i}q_{i}=0$.
\end{proof}
\begin{corollary}
\label{P-intzer}
Let $t(n)=\svect{q_{1}(n)\\\svd\\q_{s}(n)}\alf\mod\Z^{s}$
where $\alf\in\R$ is irrational 
and $q_{1},\ld,q_{s}$ are jointly intersective integral polynomials 
on a lattice $\Lam$.
Then $\overline{\{t(n)\}}_{n\in\Lam}\ni 0_{\T}$.
\end{corollary}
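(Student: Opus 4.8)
The plan is to reduce to Lemma~\ref{P-torzer}, according to which $\overline{\{t(n)\}}_{n\in\Lam}\ni 0_{\T}$ holds precisely when no linear combination of $q_1,\ld,q_s$ is a nonzero constant polynomial. So it suffices to verify, using the joint intersectivity of $q_1,\ld,q_s$, that these polynomials admit no such combination.

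Suppose to the contrary that $\sum_{i=1}^{s}c_iq_i$ is a nonzero constant for some $c_1,\ld,c_s\in\R$. I would first arrange that $c_1,\ld,c_s\in\Q$: the set $V$ of tuples in $\R^{s}$ for which $\sum_{i=1}^{s}c_iq_i$ is a constant polynomial is the $\R$-span of a $\Q$-subspace of $\Q^{s}$ (it is cut out by the $\Q$-linear conditions that every non-constant coefficient of $\sum_{i=1}^{s}c_iq_i$ vanish), and the $\Q$-rational linear functional $(c_1,\ld,c_s)\mapsto\sum_{i=1}^{s}c_iq_i(0)$, which is nonzero somewhere on $V$ by hypothesis, is then already nonzero at some rational point of $V$. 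Replacing the $c_i$ by such a rational tuple and clearing denominators, we may assume $c_i\in\Z$ for all $i$ and $\sum_{i=1}^{s}c_iq_i\equiv c$ with $c\in\Z\sm\{0\}$.

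Finally I would invoke joint intersectivity with modulus $k=|c|+1$: there is $n\in\Lam$ such that $k$ divides each of $q_1(n),\ld,q_s(n)$, and hence $k$ divides $\sum_{i=1}^{s}c_iq_i(n)=c$; but $0<|c|<k$, a contradiction. I do not expect any serious obstacle in this argument once Lemma~\ref{P-torzer} is available — the only point needing a little care is the reduction from real to rational coefficients recorded above, which could also be sidestepped by phrasing everything inside the finite-dimensional $\Q$-vector space spanned by $q_1,\ld,q_s$ from the outset.
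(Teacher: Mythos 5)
Your proposal is correct and follows essentially the same route as the paper: apply Lemma~\ref{P-torzer}, pass from real to rational (then integer) coefficients using the rationality of the coefficients of the $q_{i}$, and derive a contradiction with joint intersectivity by choosing a modulus $k>|c|$. You have merely spelled out the two steps the paper leaves implicit, and both fillings-in are sound.
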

\proof{}
If there exist $c_{1},\ld,c_{s}\in\R$ and a nonzero $c\in\R$
such that $\sum_{i=1}^{s}c_{i}q_{i}=c$,
then, since $q_{i}$ have rational coefficients,
there exist $c_{1},\ld,c_{s}\in\Z$ and a nonzero $c\in\Z$
such that $\sum_{i=1}^{s}c_{i}q_{i}=c$.
But this is impossible if $q_{i}$ are jointly intersective.
\endproof

Let now $t$ be a polynomial sequence in $\T$,
$t(n)=p_{1}(n)v_{1}+\ld+p_{r}(n)v_{r}$, $v_{i}\in\T$,
where $p_{1},\ld,p_{r}$ are jointly intersective polynomials on $\Lam$.
\begin{proposition}
\label{P-divzer}
There exists a sublattice $\Lam'$ of $\Lam$
such that $p_{1},\ld,p_{r}$ are jointly intersective on $\Lam'$,
$S=\overline{\{t(n)\}}_{n\in\Lam'}$ is a connected subtorus of\/ $\T$,
and $0_{\T}\in S$.
\end{proposition}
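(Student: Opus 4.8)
The plan is to decompose the polynomial sequence $t(n)=p_1(n)v_1+\ld+p_r(n)v_r$ into its rational and irrational parts in the coordinate representation~(\ref{f-polt}), handle each part on a suitable sublattice, and then combine. First I would pass to a sublattice $\Lam_1\sle\Lam$ on which the rational part of $t$ becomes zero: writing $t(n)=[(q_{0,1}(n),\ld,q_{0,s}(n))\frac1k+\sum_{i=1}^l(q_{i,1}(n),\ld,q_{i,s}(n))\alf_i]\mod\Z^s$, I apply Lemma~\ref{P-subjint} to the family $\{p_1,\ld,p_r\}$ with that $k$ (enlarged if necessary to clear the denominators appearing in the $q_{0,j}$) to obtain a sublattice on which all $p_j(n)$, hence all integer linear combinations $q_{0,j}(n)$, are divisible by $k$; on this sublattice the rational term $\frac1k(q_{0,1}(n),\ld,q_{0,s}(n))$ lies in $\Z^s$ and vanishes mod $\Z^s$. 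Simultaneously I must keep joint intersectivity, which Lemma~\ref{P-subjint} guarantees.

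Next, on the sublattice $\Lam_1$, the sequence $t$ is purely of the irrational form $\sum_{i=1}^l b_i(n)\alf_i$ with $1,\alf_1,\ld,\alf_l$ rationally independent and the entries of each $b_i$ being integer linear combinations of $p_1,\ld,p_r$. By Lemma~\ref{P-irrorb}(ii), $\overline{\{t(n)\}}_{n\in\Lam_1}=\sum_{i=1}^l\overline{\{t_i(n)\}}_{n\in\Lam_1}$ is a connected subtorus $S$ of $\T$; this already settles the connectedness assertion. It remains to get $0_\T\in S$. Since $S$ is a sum, it suffices to have $0_\T\in\overline{\{t_i(n)\}}_{n\in\Lam_1}$ for each $i$. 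Here I would invoke Corollary~\ref{P-intzer}: each $t_i(n)=b_i(n)\alf_i$ with $\alf_i$ irrational and the components of $b_i$ integer linear combinations of the jointly intersective polynomials $p_1,\ld,p_r$ on $\Lam_1$; one checks directly that integer linear combinations of jointly intersective polynomials are again jointly intersective (choose $n$ with $k$ dividing all $p_j(n)$), so the hypothesis of Corollary~\ref{P-intzer} holds and $0_\T\in\overline{\{t_i(n)\}}_{n\in\Lam_1}$.

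Thus with $\Lam'=\Lam_1$ all three conclusions hold: joint intersectivity on $\Lam'$ (from Lemma~\ref{P-subjint}), connectedness of $S=\overline{\{t(n)\}}_{n\in\Lam'}$ (from Lemma~\ref{P-irrorb}(ii)), and $0_\T\in S$ (from Corollary~\ref{P-intzer} applied summand-wise via Lemma~\ref{P-irrorb}(ii)). The step I expect to be the main obstacle — or at least the one requiring the most care — is the bookkeeping that makes the rational part vanish \emph{while} preserving joint intersectivity and \emph{while} retaining the clean form~(\ref{f-polt}) on the new lattice: one must be sure that after restriction the decomposition into rational and $\alf_i$-parts is still of the stated shape (the $\alf_i$ and their rational independence with $1$ are intrinsic, but the polynomials $q_{i,j}$ get replaced by their restrictions, which are still integer combinations of the restricted $p_j$), and that a single application of Lemma~\ref{P-subjint} with a large enough $k$ simultaneously kills all the denominators. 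Everything else is a direct appeal to the lemmas already proved in this section.
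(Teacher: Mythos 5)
Your proof is correct and follows essentially the same route as the paper: represent $t$ in the form (\ref{f-polt}), use Lemma~\ref{P-subjint} to pass to a sublattice killing the rational term while preserving joint intersectivity, then apply Corollary~\ref{P-intzer} to each irrational summand and combine via Lemma~\ref{P-irrorb}(ii). The only (harmless) difference is that you apply Lemma~\ref{P-subjint} to $p_{1},\ld,p_{r}$ themselves rather than to the polynomials $q_{0,1},\ld,q_{0,s}$ as the paper does, which if anything aligns more directly with the proposition's requirement that $p_{1},\ld,p_{r}$ remain jointly intersective on $\Lam'$.
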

\begin{proof}
We represent $t$ in the form (\ref{f-polt}),
where all polynomials $q_{i,j}$ are linear combinations of polynomials $p_{i}$
and so, are jointly intersective.
If a nontrivial ``rational'' term $\svect{q_{0,1}\\\svd\\q_{0,s}}\frac{1}{k}$
is present,
by Lemma~\ref{P-subjint} there exists a sublattice $\Lam'\sln\Lam$
such that the polynomials $q_{0,1},\ld,q_{0,r}$ 
are jointly intersective on $\Lam'$
and $q_{0,j}(n)\dvd k$ for all $n\in\Lam'$ and $j=1,\ld,s$.
Then $\svect{q_{0,1}(n)\\\svd\\q_{0,s}(n)}\frac{1}{k}=0\mod\Z^{s}$
for all $n\in\Lam'$, and we may ignore this term.
By Corollary~\ref{P-intzer},
for each $i=1,\ld,l$
and $t_{i}(n)=\svect{q_{i,1}(n)\\\svd\\q_{i,s}(n)}\alf_{i}\mod\Z^{s}$,
$S_{i}=\overline{\{t_{i}(n)\}}_{n\in\Lam'}$ is a (connected) subtorus of $\T$
with $0_{\T}\in S_{i}$,
and by Lemma~\ref{P-irrorb}(ii),
$S=\overline{\{t(n)\}}_{n\in\Lam'}=\sum_{i=1}^{l}S_{i}$.
Thus, $S$ is a (connected) subtorus of $\T$ with $0_{\T}\in S$.
\end{proof}
\section{Intersective polynomials and polynomial orbits on nilmanifolds}
\label{S-Nil}
Let $P$ be a ring of integral polynomials on a lattice $\Lam$.
We will say that a mapping $g$ from $\Lam$
to a nilpotent group $G$ is {\it a $P$-polynomial sequence\/}
if $g$ has the form $g(n)=a_{1}^{p_{1}(n)}\ld a_{r}^{p_{r}(n)}$
with $r\in\N$, $a_{i}\in G$ and $p_{i}\in P$, $i=1,\ld,r$. 
The following facts are obvious and will be used repeatedly in the sequel.\\
{\rm(i)}
if $g_{1}$, $g_{2}$ are $P$-polynomial sequences in $G$,
then the sequence $g_{1}(n)g_{2}(n)$ is $P$-polynomial;\\
{\rm(ii)}
if $\eta\col G\ra G'$ is a homomorphism to a nilpotent group $G'$
and $g$ is a $P$-polynomial sequence in $G$,
then $\eta(g)$ is a $P$-polynomial sequence in $G'$;\\
{\rm(iii)}
if $\eta\col G\ra G'$ is a homomorphism onto a nilpotent group $G'$
and $g'$ is a $P$-polynomial sequence in $G'$,
then there exists a $P$-polynomial sequence $g$ in $G$
such that $\eta(g)=g'$.

\begin{proposition}
\label{P-polP} 
Let $G$ be a connected nilpotent Lie group 
and $H$ be a connected closed subgroup of $G$. 
If $g$ is a $P$-polynomial sequence in $G$ 
such that $g(n)\in H$ for all $n\in\Lam$,
then $g$ is a $P$-polynomial sequence in $H$.
\end{proposition}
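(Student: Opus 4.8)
The plan is to reduce to the case of a one-parameter subgroup generated by a single element. Write $g(n)=a_1^{p_1(n)}\cdots a_r^{p_r(n)}$ with $a_i\in G$ and $p_i\in P$. Since $G$ is a connected nilpotent Lie group, the exponential map $\exp\colon\mathfrak g\to G$ is a diffeomorphism; let $X_i=\log a_i\in\mathfrak g$, so that $a_i^{t}=\exp(tX_i)$ makes sense for all real $t$ and defines a one-parameter subgroup. The first step is to observe that it suffices to prove the statement when $r=1$ and $P$ is replaced by the ring of all integral polynomials on $\Lam$: indeed, by the three ``obvious facts'' listed just before the proposition (specifically (i)), a product of $H$-valued $P$-polynomial sequences is again an $H$-valued $P$-polynomial sequence, but this doesn't immediately help since the individual factors $a_i^{p_i(n)}$ need not lie in $H$. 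So instead I would argue directly on the whole product, using an induction on the nilpotency class of $G$ together with the structure of $H$.

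The key step is the following. Let $\mathfrak h=\log H\subseteq\mathfrak g$ be the Lie subalgebra corresponding to the connected closed subgroup $H$ (connectedness of $H$ and $G$ is what lets us pass freely between $H$ and $\mathfrak h$, $G$ and $\mathfrak g$ via $\exp$). Consider $u(n)=\log g(n)\in\mathfrak g$. Using the Baker--Campbell--Hausdorff formula and the fact that $\mathfrak g$ is nilpotent (so BCH is a finite sum of iterated brackets), $u(n)$ is a polynomial map $\Lam\to\mathfrak g$ whose coordinates, in any fixed basis of $\mathfrak g$, are $P$-polynomials in $n$ — because each factor contributes $p_i(n)X_i$ and the bracket corrections are products of the $p_i(n)$'s, which again lie in the ring $P$. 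The hypothesis says $u(n)\in\mathfrak h$ for all $n\in\Lam$. Now pick a linear complement $\mathfrak m$ to $\mathfrak h$ in $\mathfrak g$ and let $\pi\colon\mathfrak g\to\mathfrak m$ be the linear projection; then $\pi(u(n))=0$ for all $n\in\Lam$. Since $\pi\circ u$ is a polynomial map $\Lam\to\mathfrak m$ vanishing identically on the lattice $\Lam$ (which is Zariski-dense in its real span, $\Lam$ being a coset of a finite-index subgroup of $\Z^m$), the polynomial $\pi\circ u$ is identically zero as a polynomial, so $u(n)\in\mathfrak h$ already at the level of formal polynomial maps.

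It remains to turn ``$u(n)=\log g(n)$ is a $P$-polynomial map into $\mathfrak h$'' into ``$g$ is a $P$-polynomial sequence in $H$.'' For this I would use the standard fact that in a connected simply connected nilpotent Lie group a polynomial sequence into $\mathfrak h\cong\R^{\dim H}$ can be exponentiated and re-expressed, via a Malcev-type basis $Y_1,\dots,Y_s$ of $\mathfrak h$ adapted to the lower central series of $H$, as a product $\exp(c_1(n)Y_1)\cdots\exp(c_s(n)Y_s)$ where each $c_j$ is built from the coordinates of $u$ by BCH inversion — hence each $c_j\in P$; then each $\exp(c_j(n)Y_j)=b_j^{c_j(n)}$ with $b_j=\exp Y_j\in H$, exhibiting $g$ as a $P$-polynomial sequence in $H$. (If $G$ is not simply connected one first passes to the universal cover, where $H$ pulls back to a connected closed subgroup, applies the above, and pushes back down; connectedness of $H$ ensures the image is genuinely inside $H$.) The main obstacle I anticipate is purely bookkeeping: one must check carefully that every BCH-correction term and every coordinate-inversion keeps the coefficients inside the ring $P$ rather than merely in the ring of all integral polynomials on $\Lam$ — this is where the ring structure of $P$ (closure under products and integer linear combinations) is essential, and it is the only place the hypothesis ``$P$ is a ring'' is really used.
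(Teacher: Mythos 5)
Your overall strategy --- pass to the universal cover, use exponential/Malcev coordinates adapted to $H$, invoke the Campbell--Hausdorff formula to see that everything is polynomial, and re-express $g$ as a product of powers of elements of $H$ --- is essentially the paper's route. But there is a genuine gap at the decisive step, which is the whole point of the proposition: membership of the exponents in the ring $P$. The coordinates of $u(n)=\log g(n)$ are \emph{not} ``$P$-polynomials'': BCH produces terms such as $\tfrac12 p_i(n)p_j(n)$ times brackets, and expanding brackets in a fixed basis introduces arbitrary real coefficients (coming from the structure constants and from the coordinates of the $a_i$). So the coordinates of $u(n)$, and likewise the second-kind coordinates $c_j(n)$ you obtain by ``BCH inversion'', are real linear combinations of monomials in $p_1(n),\ldots,p_r(n)$. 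Such a combination is in general not even an integral polynomial (think of $\tfrac12 p_1^2$), let alone an element of $P$; hence the assertion ``hence each $c_j\in P$'' is false, and the ``bookkeeping check'' you defer cannot succeed as stated. What your argument actually yields is only that $g$ is a polynomial sequence in $H$ with real-coefficient exponent polynomials, which is weaker than the claim. (A minor additional point: $\exp$ is a diffeomorphism only when $G$ is simply connected; your universal-cover parenthetical is the same reduction the paper makes, so this is not the issue.)

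The missing idea --- and the way the paper closes exactly this gap --- is to split each exponent polynomial $F_j$ into monomials, $F_j=\sum_i\alpha_{j,i}F_{j,i}$ with $\alpha_{j,i}\in\R$ and $F_{j,i}$ nonconstant monic monomials, and to absorb the real scalars into the base points rather than into the exponents: since $e_j\in H$ and $H$ is connected (so contains the one-parameter subgroup through $e_j$), one has $e_j^{\alpha_{j,i}}\in H$ and $e_j^{F_j(p_1(n),\ldots,p_r(n))}=\prod_i\bigl(e_j^{\alpha_{j,i}}\bigr)^{F_{j,i}(p_1(n),\ldots,p_r(n))}$. The exponents $F_{j,i}(p_1(n),\ldots,p_r(n))$ are now products of the $p_i$'s, hence lie in $P$ precisely because $P$ is a ring; this, and not closure of $P$ under BCH coefficients, is where the ring hypothesis enters. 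With this modification your argument becomes the paper's proof (which works directly with a Malcev basis of $G$ compatible with $H$; note also that your Zariski-density step is unnecessary, since the vanishing of the coordinates transverse to $H$ is only needed at the points of $\Lam$).
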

\begin{remark}
Actually, the assertion of Proposition~\ref{P-polP} holds 
for any (not necessarily topological) nilpotent group and any its subgroup
(see \cite{pon}).
\end{remark}
\begin{proof}
Replacing $G$ by its universal cover
we may assume that $G$ is simply-connected.
We then may choose a Malcev basis in $G$,
that is, elements $e_{1},\ld,e_{k}\in G$
such that every element of $G$ is uniquely representable in the form
$\prod_{j=1}^{k}e_{j}^{y_{j}}$ with $y_{1},\ld,y_{k}\in\R$.
(See \cite{Malcev}.
Elements $e_{i}$ can be chosen to be of the form $e_i=\exp(\epsilon_i)$ 
where $(\epsilon_1,\ld,\epsilon_k)$ is a linear base of the Lie algebra of $G$.)
Moreover, by an elementary linear algebra argument,
the basis can be chosen compatible with $H$,
so that for some $j_{1},\ld,j_{l}\in\{1,\ld,k\}$,
the elements $e_{j_{1}},\ld,e_{j_{l}}$ form a basis in $H$,
and thus $\prod_{j=1}^{k}e_{j}^{y_{j}}\in H$ 
iff $y_{j}=0$ for all $j\not\in\{j_{1},\ld,j_{l}\}$.

From the Campbell-Hausdorff formula
we know that multiplication in $G$ is polynomial in the Malcev basis,
that is,
$\bigl(\prod_{j=1}^{k}e_{j}^{y_{j}}\bigr)\cdot
\bigl(\prod_{j=1}^{k}e_{j}^{z_{j}}\bigr)
=\prod_{j=1}^{k}e_{j}^{Q_{j}(y_{1},\ld,y_{k},z_{1},\ld,z_{k})}$
and $\bigl(\prod_{j=1}^{k}e_{j}^{y_{j}}\bigr)^{n}
=\prod_{j=1}^{k}e_{j}^{R_{j}(y_{1},\ld,y_{k},n)}$
where $Q_{j}$ and $R_{j}$ are polynomials vanishing at 0.
Thus, any polynomial sequence $g(n)=a_{1}^{p_{1}(n)}\ld a_{r}^{p_{r}(n)}$ in $G$
can be uniquely written as
$g(n)=\prod_{j=1}^{k}e_{j}^{F_{j}(p_{1}(n),\ld,p_{r}(n))}$
where $F_{j}$ are polynomials vanishing at 0.
If $g$ takes values only in $H$,
$F_{j}(p_{1}(n),\ld,p_{r}(n))=0$ for all $j\not\in\{j_{1},\ld,j_{l}\}$,
and $g(n)=\prod_{j\in\{j_{1},\ld,j_{l}\}}e_{j}^{F_{j}(p_{1}(n),\ld,p_{r}(n))}$
is a polynomial sequence in $H$.
The last formula can be rewritten as
$g(n)=\prod_{j\in\{j_{1},\ld,j_{l}\}}\prod_{i=1}^{k_{j}}
(e_{j}^{\alf_{j,i}})^{F_{j,i}(p_{1}(n),\ld,p_{r}(n))}$
where $\alf_{j,i}\in\R$ and $F_{j,i}$ are nonconstant monomials.
Now, if all $p_{i}$ are in $P$,
the polynomials $q_{j,i}(n)=F_{j,i}(p_{1}(n),\ld,p_{r}(n))$ are also in $P$,
and so, $g$ is a $P$-polynomial sequence in $H$.
\end{proof}

We will also need the following fact:
\begin{proposition}
\label{P-maxtor}
{\rm(\cite{pen})}
Let $G$ be a connected nilpotent Lie group,
let $X=G/\Gam$ be a nilmanifold,
let $\pi$ be the canonical projection $G\ra X$,
let $\T$ be the torus $\lfa{X}{[G,G]}$,
and let $\xi\col X\ra\T$ be the projection.
If a polynomial sequence $g$ in $G$
is such that $\xi(\pi(g(n)))$ is dense in $\T$,
then $\pi(g(n))$ is dense in $X$.
\end{proposition}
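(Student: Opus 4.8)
\medskip\noindent\emph{Proof proposal.}
The plan is to induct on the nilpotency class of $G$, reducing at each step by the last nontrivial term of its lower central series; the base of the induction is the torus, where the statement reduces to Weyl's theorem. First I would pass to the universal cover of $G$ (replacing $\Gam$ by its preimage), so as to assume $G$ simply connected; this changes neither $X$ nor $\T$. Two observations will be used throughout. By \cite{mpn}, the set $Y=\overline{\{\pi(g(n)):n\in\Z^{m}\}}$ is a finite union of subnilmanifolds of $X$, and $\Clim_{n}\delta_{\pi(g(n))}=\mu'_{Y}$ for a measure $\mu'_{Y}$ supported on $Y$; since $G$, hence $X$, is connected and a proper subnilmanifold has zero Haar measure, it suffices to prove $\mu'_{Y}=\mu$. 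Secondly, $n\mapsto\xi(\pi(g(n)))$ is a polynomial sequence in the torus $\T$, so by Weyl's theorem (\cite{Weyl}, used exactly as in Section~\ref{S-Tor}) its density in $\T$ is equivalent to its equidistribution in $\T$; we may thus assume the latter, which in particular makes $\xi_{*}\mu'_{Y}$ the Haar measure of $\T$.

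\medskip\noindent
If $G$ is abelian then $[G,G]=\{e\}$, $\xi$ is the identity map of $X$, and the conclusion coincides with the hypothesis. Assume $G$ has class $s\geq2$ and let $Z$ be the last nontrivial term of its lower central series --- a closed connected central subgroup with $Z\sle[G,G]$. Then $\Gam_{Z}=\Gam\cap Z$ is a cocompact lattice in $Z$ and $T_{Z}=Z/\Gam_{Z}$ is a torus. Put $\bar G=G/Z$, $\bar\Gam=\Gam Z/Z$, $\bar X=\bar G/\bar\Gam$, and let $p\col X\ra\bar X$ be the projection; as $Z$ is central, $p$ is a principal $T_{Z}$-bundle for the action of $T_{Z}$ on $X$ induced by left translations by $Z$. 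Because $Z\sle[G,G]$, one has $[\bar G,\bar G]=[G,G]/Z$, hence $\bar X/[\bar G,\bar G]=\T$ and $\xi=\bar\xi\comp p$ for the corresponding projection $\bar\xi\col\bar X\ra\T$. The image $\bar g$ of $g$ in $\bar G$ is again a polynomial sequence, and $\bar\xi(\pi_{\bar X}(\bar g(n)))=\xi(\pi(g(n)))$ is dense in $\T$; so, by the induction hypothesis, $\pi_{\bar X}(\bar g(n))$ is equidistributed in $\bar X$. In particular $p(Y)=\bar X$ and $p_{*}\mu'_{Y}$ is the Haar measure of $\bar X$.

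\medskip\noindent
It remains to climb the central torus fibre: to upgrade equidistribution in $\bar X$ to equidistribution in $X$. Decompose $L^{2}(X)=\bigoplus_{\chi\in\widehat{T_{Z}}}H_{\chi}$ into the isotypic components of the $T_{Z}$-action ($f\in H_{\chi}$ iff $f(\tau\cd x)=\chi(\tau)f(x)$ for all $\tau\in T_{Z}$); the component $H_{\chi_{0}}$ of the trivial character is $p^{*}L^{2}(\bar X)$, on which equidistribution of $\pi(g(n))$ is precisely the content of the previous paragraph. For a nontrivial $\chi$ and $f\in H_{\chi}$ --- so that $\int_{X}f\,d\mu=0$ --- one must prove $\Clim_{n}f(\pi(g(n)))=0$. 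Here I would invoke the van der Corput inequality, bounding $\bigl|\Clim_{n}f(\pi(g(n)))\bigr|^{2}$ by an average over $h\in\Z^{m}$ of $\Clim_{n}f(\pi(g(n+h)))\overline{f(\pi(g(n)))}$. The summand equals $(f\tens\bar f)$ integrated against the limiting distribution of the polynomial sequence $n\mapsto(g(n+h),g(n))$ in $G\times G$ (again by \cite{mpn}), and, since $|\chi|\equiv1$, the function $f\tens\bar f$ is invariant under the diagonal $T_{Z}$-action. A PET-type induction --- on the degrees of the exponents $p_{i}$ and the number of generators, which drop under differencing because the top-order terms of $p_{i}(n+h)-p_{i}(n)$ cancel --- then reduces this to situations already settled (equidistribution on $\bar X$ and on lower-dimensional nilmanifolds) and shows that the integral vanishes for all but a density-zero set of $h$. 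Hence $\Clim_{n}f(\pi(g(n)))=0$ for every nontrivial $\chi$, so $\Clim_{n}f(\pi(g(n)))=\int_{X}f\,d\mu$ for $f$ in a dense subspace of $C(X)$; therefore $\mu'_{Y}=\mu$ and $\pi(g(n))$ is dense in $X$.

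\medskip\noindent
The step I expect to be the main obstacle is the last one --- the van der Corput/differencing argument propagating equidistribution up the central extension $X\ra\bar X$. The structure theorem of \cite{mpn} by itself does not resolve it: knowing only that $Y$ is a finite union of subnilmanifolds with $p(Y)=\bar X$ already forces $\xi_{*}\mu'_{Y}$ to be Haar on $\T$ and yields no further information, so one genuinely needs the specific form of $g$ to exclude its orbit being trapped in a proper $T_{Z}$-sub-bundle of $X$. Making the van der Corput scheme precise requires careful bookkeeping of how the differenced polynomial sequences $(g(\cdot+h),g(\cdot))$ sit in $G\times G$ and a well-founded complexity ordering ensuring that, after the cancellation of the vertical character, one descends to a genuinely simpler problem; this is the nilpotent counterpart of Weyl differencing and constitutes the technical core of the argument.
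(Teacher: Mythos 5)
First, a point of reference: the paper does not prove Proposition~\ref{P-maxtor} at all --- it is imported with a citation to \cite{pen} --- so there is no in-house argument to compare yours against; what follows assesses your sketch on its own terms and against the tools the paper already quotes. Your road map (pass to the universal cover, induct on the nilpotency class through the last term $Z$ of the lower central series, use Weyl's theorem on the base torus, and treat the central fibre $T_{Z}$ by decomposing $L^{2}(X)$ into vertical isotypic components and killing the nontrivial characters by van der Corput) is the classical route and essentially the one taken in \cite{pen}. The problem is that, as you yourself concede, the entire analytic content of the proposition sits in the last step, and you have not carried it out: you invoke ``a PET-type induction'' on the sequences $n\mapsto(g(n+h),g(n))$ in $(G\times G)/\Delta(Z)$ without exhibiting the well-founded complexity ordering, without verifying that the differenced data is genuinely simpler (the nilpotency class of $(G\times G)/\Delta(Z)$ does \emph{not} drop when $Z$ is the last term of the lower central series, and neither do the degrees of the exponents of $(g(n+h),g(n))$; what decreases is a subtler weight vector), and without proving the density-zero-in-$h$ vanishing that van der Corput requires. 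As written, this is a correct outline with the hard part left blank, not a proof.

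There is also a concrete misjudgment worth flagging: you assert that the orbit-closure theorem of \cite{mpn} ``by itself does not resolve it.'' In fact it does, and this gives a short complete argument from what the paper already quotes in Section~\ref{S-Orb}. By \cite{mpn}, $Y=\overline{\{\pi(g(n))\}}$ is a finite union of subnilmanifolds $K_{i}x_{i}$. Since $\xi$ is continuous and $Y$ compact, $\xi(Y)=\T$; each $\xi(K_{i}x_{i})$ is a coset of a compact subgroup of the connected torus $\T$, so by Baire one of them has interior and hence equals $\T$. Writing $x_{i}=c\Gam$ and using normality of $[G,G]$, this says $K_{i}[G,G]\Gam'=G$ for the lattice $\Gam'=c\Gam c^{-1}$; as $K_{i}[G,G]$ is a closed connected subgroup whose orbit is all of the $\dim G$-dimensional manifold $G/\Gam'$, a dimension count gives $K_{i}[G,G]=G$, and the elementary fact that in a nilpotent group a subgroup $H$ with $H[G,G]=G$ must equal $G$ yields $K_{i}=G$, i.e.\ $Y=X$. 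In particular, the ``proper $T_{Z}$-sub-bundle'' you worry about cannot be a subnilmanifold: $KZ\Gam=G$ already forces $K[G,G]=G$ and hence $K=G$. All the van der Corput work is hidden inside the proof of the structure theorem, so it need not be redone; if you insist on a self-contained proof along your lines, the PET step must be written out in full.
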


Now let $G$ be a nilpotent group,
$\Gam$ a closed uniform subgroup of $G$,
and $X=G/\Gam$.
Let $\pi$ be the projection $G\ra X$,
and $1_{X}=\pi(1_{G})\in X$.
Let $a_{1},\ld,a_{r}\in G$,
let $p_{1},\ld,p_{r}$ be jointly intersective polynomials
on a lattice $\Lam$,
and let $P$ be the ring generated by the polynomials $p_{1},\ld,p_{r}$.
Proposition~\ref{P-DinY} is a consequence of the following proposition,
applied to $g(n)=a_{1}^{p_{1}(n)}\ld a_{r}^{p_{r}(n)}$:
\begin{proposition}
\label{P-PDinY}
If $g$ is a $P$-polynomial sequence in $G$ and $x\in X$, 
then $\overline{\{g(n)x\}}_{n\in\Lam}\ni x$.
\end{proposition}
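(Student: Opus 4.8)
The plan is to prove Proposition~\ref{P-PDinY} by induction on the nilpotency class of $G$ (or, equivalently, by induction on $\dim X$), using the torus case established in Section~\ref{S-Tor} as the base of the induction and the structural facts about polynomial orbit closures (they are finite unions of subnilmanifolds, with a prescribed asymptotic distribution) as the engine. First I would reduce to the case where $G$ is connected: passing to the connected component $G^{o}$, the orbit $\overline{\{g(n)x\}}_{n\in\Lam}$ splits according to the finitely many values of $g(n)$ modulo $G^{o}$, and since the $p_i$ are jointly intersective we can pass to a sublattice $\Lam'\sle\Lam$ on which $p_1,\ldots,p_r$ remain jointly intersective (Lemma~\ref{P-subdiv} and Lemma~\ref{P-subjint}) and on which $g$ lands in the right coset; replacing $g$ by $a^{-1}g$ for a suitable $a$ we reduce to $g(\Lam')\sle G^{o}$. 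We may further translate so that $x=1_X$, since for general $x=\pi(b)$ the claim $\overline{\{g(n)b\Gam\}}\ni b\Gam$ is equivalent to $\overline{\{(b^{-1}g(n)b)\Gam\}}\ni 1_X$ and conjugation sends $P$-polynomial sequences to $P$-polynomial sequences (fact (ii) before Proposition~\ref{P-polP}); the conjugated sequence is still built from the same jointly intersective $p_i$.

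Next I would set $Y=\corb_g(1_X)$, which by the result quoted from \cite{mpn} is a subnilmanifold (or a finite union of such), and I would argue that after passing to a sublattice we may assume $Y$ is connected, so $Y=Hz$ for a connected closed subgroup $H\le G$. The key reduction is to the torus $\T=\lfa{Y}{[H,H]}$ associated with $Y$ (not with $X$): by Proposition~\ref{P-maxtor} applied inside $Y$, to show $\overline{\{g(n)\cdot 1_X\}}_{n\in\Lam'}$ is all of $Y$ it suffices to show the projected sequence is dense in this torus, and the projection of a $P$-polynomial sequence to an abelian quotient is exactly a polynomial sequence in the torus sense of Section~\ref{S-Tor}, whose constituent polynomials are linear combinations of the $p_i$ and hence jointly intersective. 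Here is where the induction genuinely bites: $Y$ need not equal $X$, so if $\dim Y<\dim X$ we may simply apply the inductive hypothesis to the nilmanifold $Y$. The substantive case is $Y=X$, i.e. when the orbit is already dense-up-to-rationality; then I would use Proposition~\ref{P-divzer} on the torus $\T=\lfa{X}{[G,G]}$: it produces a sublattice $\Lam'$ on which $p_1,\ldots,p_r$ stay jointly intersective and the torus-projected orbit is a connected subtorus containing $0_\T$. If that subtorus is all of $\T$, Proposition~\ref{P-maxtor} gives density in $X$ and we are done; if it is a proper subtorus, the orbit closure lies in a proper subnilmanifold (the preimage of that subtorus, which by the structure theorem is a subnilmanifold), and we recurse into it — a strictly smaller nilmanifold — again with jointly intersective polynomials.

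To make the recursion terminate cleanly I would phrase the induction on $\dim X$: at each stage either the torus-projected orbit is dense in $\lfa{Y}{[H,H]}$ (done, by Proposition~\ref{P-maxtor}) or, after the sublattice refinement from Proposition~\ref{P-divzer}, it lies in a proper connected subtorus through $0$, forcing $\overline{\{g(n)\}}$ into a proper subnilmanifold $Y'\ni 1_X$ of $Y$ on which the same sequence is still $P$-polynomial with $p_i$ jointly intersective (using Proposition~\ref{P-polP} to see that $g$, landing in the connected subgroup cutting out $Y'$, is genuinely a $P$-polynomial sequence in that subgroup). Since $1_X$ itself always survives into $Y'$, at the bottom of the recursion we reach $1_X\in\overline{\{g(n)\cdot 1_X\}}$, which is what we want.

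The main obstacle I anticipate is bookkeeping the sublattice refinements: each application of Proposition~\ref{P-divzer}, Lemma~\ref{P-subdiv}, or Lemma~\ref{P-subjint} replaces $\Lam$ by a smaller lattice, and one must check that joint intersectivity is preserved at every step and that the conclusion $\overline{\{g(n)x\}}_{n\in\Lam'}\ni x$ for the smaller lattice implies it for the original $\Lam$ (it does, trivially, since a smaller orbit closure is contained in the larger). A secondary subtlety is the passage to the connected component and the conjugation normalization in the non-connected case, where one has to verify that the finitely many "coset classes" of $g(n)$ can be handled uniformly — this is exactly what the $k!$-trick in the proof of Lemma~\ref{P-subdiv} is designed for, so I would invoke that mechanism rather than re-deriving it. Everything else is a direct assembly of the torus computations of Section~\ref{S-Tor} with the orbit-closure structure theorem and Proposition~\ref{P-maxtor}.
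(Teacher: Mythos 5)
Your overall strategy (induct on dimension, project to the maximal torus factor, use Proposition~\ref{P-maxtor} when the projected orbit is dense, and otherwise use Proposition~\ref{P-divzer} to drop into a proper subnilmanifold through the base point) is the same as the paper's, but there is a genuine gap at the crucial descent step. When the torus-projected sequence $t=\tau(\eta(g))$ closes up in a proper connected subtorus $S\ni 1_{M}$, you claim that $g$ itself lands in the connected subgroup $H=\eta^{-1}(L)$ (where $L$ is the identity component of $\tau^{-1}(S)$) and then apply Proposition~\ref{P-polP} and the inductive hypothesis inside $H$. This is false in general: only the projection $\pi(g(n))$ lies in the subnilmanifold $Y=\pi(H)$; the group element $g(n)$ lies in $H\Gamma$, and its $H$-component varies with $n$. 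Without a sequence that genuinely takes values in $H$ you cannot invoke Proposition~\ref{P-polP}, and the induction does not get off the ground. The paper's proof repairs exactly this point: it lifts $t$ to a $P$-polynomial sequence $u$ in $L$ (fact (iii) about surjective homomorphisms), observes that $\lambda(n)=u(n)^{-1}\eta(g(n))$ is a $P$-polynomial sequence in $\eta(\Gamma)$, lifts it to a $P$-polynomial sequence $\gamma$ in $\Gamma$, and replaces $g$ by $h=g\gamma^{-1}$. Since $\gamma$ takes values in $\Gamma$, $\pi(h)=\pi(g)$, while now $\eta(h)=u$ so $h$ does take values in $H$; only then do Proposition~\ref{P-polP} and the induction apply. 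This right-correction by a $\Gamma$-valued polynomial sequence is the essential idea missing from your write-up.

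The same defect appears in your reduction to connected $G$. The coset of $g(n)$ modulo $G^{o}$ is itself a polynomial sequence in the discrete group $G/G^{o}$ and is in general not constant on any sublattice, so "replacing $g$ by $a^{-1}g$ for a suitable $a$" does not make the sequence land in $G^{o}$; moreover, left multiplication by a constant $a$ changes the recurrence statement (density of $\{a^{-1}g(n)x\}$ near $x$ gives information near $ax$, not near $x$). The paper instead uses joint intersectivity only to pass to a finite-index subgroup $\hat G$ covering the identity component of $X$ (via Lemma~\ref{P-subjint} with $k$ the index), and then performs the same $\Gamma$-valued correction $g'=g\delta^{-1}$, with $\delta$ a $P$-polynomial sequence in $\Gamma$ matching $g$ modulo $G^{o}$ (possible because $X$ connected gives $\theta(\Gamma)=G/G^{o}$), before applying Proposition~\ref{P-polP}. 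Two smaller remarks: the orbit-closure structure theorem from \cite{mpn} and the intermediate manifold $Y=\corb_{g}(1_{X})$ are not needed for this proposition and do not substitute for the correction step, since applying the induction to $Y$ requires exactly the same kind of $H$-valued sequence; and the polynomials appearing after projection to the abelianization are elements of the ring $P$ (polynomial expressions in $p_{1},\ldots,p_{r}$ without constant term), not merely linear combinations, which is precisely why the statement is formulated for $P$-polynomial sequences.
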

\begin{proof}
It is enough to prove that, for any $P$-polynomial sequence $g$, 
we have $\overline{\{\pi(g(n))\}}_{n\in\Lam}\ni 1_{X}$. 
Indeed, if $x=g_0\Gam\in X$ then $g_0^{-1}gg_0 $ 
is a $P$-polynomial sequence and $\overline{\{g(n)x\}}_{n\in\Lam}\ni x$ 
iff $\overline{\{\pi(g_0^{-1}g(n)g_0)\}}_{n\in\Lam}\ni 1_{X}$.

If $X$ is not connected,
let $\hG$ be a subgroup of finite index $k$ in $G$
such that $\Xc=\pi(\hG)$ is the identity component of $X$.
By Lemma~\ref{P-subjint}, there exists a sublattice $\Lam'$ of $\Lam$
such that the polynomials $p_{1},\ld,p_{r}$ 
are jointly intersective on $\Lam'$
and for any $n\in\Lam'$, $p_{1}(n),\ld,p_{r}(n)\dvd k$. 
The sequence $g\rest{\Lam'}$ takes values in $\hG$,
and after replacing $\Lam$ by $\Lam'$, $G$ by $\hG$, and $X$ by $\Xc$
we may assume that $X$ is connected.

Let $\Gc$ be the identity component of $G$
and let $\theta$ be the canonical homomorphism $G\ra G/\Gc$.
Since $X$ is connected, $\theta(\Gam)=G/\Gc$,
and thus there exists a $P$-polynomial sequence $\del$ in $\Gam$
such that $\theta(\del)=\theta(g)$.
The sequence $g'(n)=g(n)\del(n)^{-1}$ 
takes values in $\Gc$ and satisfies $\pi(g')=\pi(g)$, $n\in\Lam$.
By Proposition~\ref{P-polP}, $g'(n)$ is a $P$-polynomial sequence in $\Gc$.
After replacing $g$ by $g'$ and $G$ by $\Gc$
we may assume that $G$ is connected.

Let $V=G/[G,G]=[G,G]\backslash G$ 
with $\eta\col G\ra V$ being the canonical projection.
$V$ is a connected commutative Lie group.
Let $\T$ be the torus $V/\eta(\Gam)=\lfa{X}{[G,G]}$
with $\tau\col V\ra\T$ being the projection;
we will use multiplicative notation for $V$ and $\T$.
Let $t(n)=g(n)1_{\T}$, $n\in\Lam$;
in other words, $t=\tau(\eta(g))$ is the projection of $g$ on $\T$.
$$
\begin{matrix} 
&\Gc&\ni&g(n)&&H\\
&\downarrow\scr\eta&&\downarrow&&\downarrow\\
&V=\Gc/[\Gc,\Gc]&\ni&\eta(g(n))&&L\\
&\downarrow\scr\tau&&\downarrow&&\downarrow\\
M=&\Gc/\left([\Gc,\Gc](\Gam\cap\Gc)\right)&\ni&t(n)=\tau(\eta(g(n))&\in&S
\end{matrix}
$$
If $t$ is dense in $\T$, then by Proposition~\ref{P-maxtor}, 
$g$ is dense in $X$ and we are done.
Assume that $t$ is not dense in $\T$.
We know that $t$ is a $P$-polynomial sequence in $\T$.
By Proposition~\ref{P-divzer},
after replacing $\Lam$ by a suitable sublattice,
the polynomials $p_{1},\ld,p_{r}$ remain jointly intersective
and $S=\overline{\{t(n)\}}_{n\in\Lam}$ is a connected proper subtorus of $\T$
with $1_{\T}\in S$.

Note that $\tau^{-1}(S)$ is a proper subgroup of $V$. 
Let $L\sle V$ be the identity component of $\tau^{-1}(S)$. 
We have $\tau(L)=S$.
Let $u$ be a $P$-polynomial sequence in $L$ such that $\tau(u)=t$.
Then $\tau(\eta(g))=\tau(u)$,
thus $u(n)^{-1}\eta(g(n))\in\eta(\Gam)$, $n\in\Lam$.
The sequence $\lam(n)=u(n)^{-1}\eta(g(n))$, $n\in\Lam$,
is $P$-polynomial in $\eta(\Gam)$;
let $\gam$ be a $P$-polynomial sequence in $\Gam$
such that $\eta(\gam)=\lam$.
Put $h(n)=g(n)\gam(n)^{-1}$, $n\in\Lam$;
then $\pi(h)=\pi(g)$ and $\eta(h)=u$.

Let $H=\eta^{-1}(L)$;
then $H$ is a proper closed connected subgroup of $G$,
and $Y=\pi(H)$ is a subnilmanifold of $X$ 
that contains the sequence $\pi(h)=\pi(g)$.
The sequence $h$ takes values in $H$,
thus by Proposition~\ref{P-polP}, $h$ is a $P$-polynomial sequence in $H$.
By induction on the dimension of $H$,
$\overline{\{\pi(h(n))\}}_{n\in\Lam}\ni 1_{Y}=1_{X}$.
\end{proof}
\section{Polynomial Szemer\'{e}di and van der Waerden theorems}
\label{S-Sz}
\begin{proof}[Proof of Theorem~\ref{P-Sz}]
By Furstenberg's correspondence principle,
there exists a probability measure preserving system $(X,\B,\mu,T)$
and a set $A\in\B$ with $\mu(A)=d^{*}(E)$
such that for any $n_{1},\ld,n_{l}\in\Z$ one has
$d^{*}\bigl(E\cap(E-n_{1})\cap\ld\cap(E-n_{l})\bigr)
\geq\mu\bigl(A\cap T^{-n_{1}}A\cap\ld\cap T^{-n_{l}}A\bigr)$.
Let $c_{n}=\mu\bigl(A\cap T^{-p_{1}(n)}A\cap\ld\cap T^{-p_{r}(n)}A\bigr)$,
$n\in\Z^{m}$.
By Theorem~\ref{ThD},
$\lim_{N-M\ras\infty}\frac{1}{(N-M)^{m}}\sum_{n\in[M,N-1]^{m}}c_{n}=C>0$,
and thus $d_{*}\bigl(\{n\in\Z^{m}:c_{n}>C/2\}\bigr)>0$,
where $d_{*}(F)=\liminf_{N-M\ras\infty}\frac{|F\cap[M,N-1]^{m}|}{(N-M)^{m}}$).
This means that the set $\{n\in\Z^{m}:c_{n}>C/2\}$ is syndetic.
\end{proof}
The polynomial van der Waerden theorem for jointly intersective polynomials, 
Theorem~\ref{ThvdW}, 
is an immediate corollary of Theorem~\ref{P-Sz}. 
However, using a ``uniformity'' in Theorem~\ref{ThD}
(and following an idea which was utilized in \cite{BM1}),
we can get a stronger version of Theorem~\ref{ThvdW}.
We start with the following strengthening of the preceding theorem.
\begin{proposition}
\label{P-cvdW}
Let $p_{1},\ld,p_{r}$ be jointly intersective integral polynomials on $\Z^{m}$
and let sets $E_{1},\ld,E_{s}\sle\Z$ 
be such that $d^{*}(E_{i})>0$ for all $i=1,\ld,s$.
Then there exists $\eps>0$ such that the set 
\begin{equation}
\label{f-S}
S=\bigcap_{i=1}^{s}\Bigl\{n\in\Z^{m}: 
d^{*}(E_{i}\cap(E_{i}-p_{1}(n))\cap\ld\cap(E_{i}-p_{r}(n))\bigr)>\eps\Bigr\}
\end{equation}
is syndetic.
\end{proposition}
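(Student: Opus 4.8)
The plan is to reduce Proposition~\ref{P-cvdW} to a single application of Theorem~\ref{ThD} by packaging the finitely many sets $E_{1},\ld,E_{s}$ into one measure preserving system. First I would apply Furstenberg's correspondence principle to the set $E=E_{1}$ — more precisely, I want a single system $(X,\B,\mu,T)$ together with sets $A_{1},\ld,A_{s}\in\B$ such that $\mu(A_{i})=d^{*}(E_{i})>0$ and $d^{*}\bigl(E_{i}\cap(E_{i}-n_{1})\cap\ld\cap(E_{i}-n_{l})\bigr)\geq\mu\bigl(A_{i}\cap T^{-n_{1}}A_{i}\cap\ld\cap T^{-n_{l}}A_{i}\bigr)$ simultaneously for all $i$. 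The standard correspondence principle gives this one set at a time; to handle several sets at once one passes to the orbit-closure of the point $(1_{E_{1}},\ld,1_{E_{s}})\in\{0,1\}^{s}$ under the shift on $\{0,1\}^{\Z\times\{1,\ld,s\}}$ (or, equivalently, takes the system generated by the $s$-tuple of indicator functions), choosing along a F\o lner sequence realizing $d^{*}(E_{1})$ a shift-invariant measure $\mu$ for which the cylinder set $A_{1}$ has measure exactly $d^{*}(E_{1})$ and each $A_{i}$ has measure $\leq d^{*}(E_{i})$; a routine diagonal/compactness argument upgrades this so that every $\mu(A_{i})>0$.

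Once such a single system is in hand, the argument is exactly that of the proof of Theorem~\ref{P-Sz}. For each $i$ set $c^{(i)}_{n}=\mu\bigl(A_{i}\cap T^{-p_{1}(n)}A_{i}\cap\ld\cap T^{-p_{r}(n)}A_{i}\bigr)$. Theorem~\ref{ThD} (applied in the system $(X,\B,\mu,T)$ with $A=A_{i}$, using that the polynomials $p_{1},\ld,p_{r}$ are jointly intersective) gives $\Clim_{n}c^{(i)}_{n}=C_{i}>0$ for each $i$. Put $\eps=\tfrac{1}{2}\min\{C_{1},\ld,C_{s}\}>0$. For each $i$, the averages of $c^{(i)}_{n}$ over $[M,N-1]^{m}$ converge to $C_{i}>2\eps$, hence $d_{*}\bigl(\{n:c^{(i)}_{n}>\eps\}\bigr)>0$, so this set is syndetic in $\Z^{m}$; and by the correspondence inequality $\{n:c^{(i)}_{n}>\eps\}\sle\{n: d^{*}(E_{i}\cap(E_{i}-p_{1}(n))\cap\ld\cap(E_{i}-p_{r}(n)))>\eps\}$. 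A finite intersection of syndetic sets in $\Z^{m}$ is syndetic, so the set $S$ in (\ref{f-S}) contains the syndetic set $\bigcap_{i=1}^{s}\{n:c^{(i)}_{n}>\eps\}$ and is therefore itself syndetic.

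The only genuinely non-routine point is the first one: arranging a \emph{common} measure preserving system that simultaneously serves all of $E_{1},\ld,E_{s}$ through the correspondence inequality, with all $\mu(A_{i})$ strictly positive. This is where I would be careful, since the version of the correspondence principle quoted in the excerpt is stated for a single set. The fix is purely soft — work in the product symbolic space $\Omega=\{0,1\}^{\{1,\ld,s\}}$ with the $\Z$-shift, let $x\in\Omega^{\Z}$ encode the $s$ indicator sequences, and take a weak-$*$ limit of the empirical measures $\frac{1}{N_{k}}\sum_{n=0}^{N_{k}-1}\del_{T^{n}x}$ along a F\o lner sequence $[0,N_{k}-1]$ along which $|E_{1}\cap[0,N_{k}-1]|/N_{k}\to d^{*}(E_{1})$; the $i$-th coordinate cylinder $A_{i}$ then has $\mu(A_{i})\leq d^{*}(E_{i})$, and the usual subadditivity estimate gives the stated inequality for each $i$. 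If some $\mu(A_{i})=0$ one repeats, passing to a further F\o lner sequence making that density positive, and iterates finitely many times. After that the argument is mechanical, driven entirely by Theorem~\ref{ThD} and the stability of syndeticity under finite intersections.
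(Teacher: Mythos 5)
Your argument breaks down at its final step: the claim that a finite intersection of syndetic sets in $\Z^{m}$ is syndetic is false (in $\Z$, the sets $2\Z$ and $2\Z+1$ are both syndetic and have empty intersection). What your argument actually produces is, for each $i$ separately, a syndetic set $R_{i}=\{n\in\Z^{m}:\mu(A_{i}\cap T^{-p_{1}(n)}A_{i}\cap\ld\cap T^{-p_{r}(n)}A_{i})>\eps\}$, with no control relating the $R_{i}$ for different $i$; nor does positive lower Banach density of each $R_{i}$ (which is what the averaging of $c^{(i)}_{n}$ gives, and which does imply syndeticity) help, since sets of small positive lower density can be pairwise disjoint. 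This simultaneity is precisely the content of Proposition~\ref{P-cvdW} --- it is why the paper presents it as a strengthening requiring ``uniformity'' rather than as an immediate consequence of Theorem~\ref{P-Sz} applied to each $E_{i}$.

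The paper's proof (following \cite{BM1}) gets the uniformity by applying Theorem~\ref{ThD} only once, to a product system: one takes separate correspondence systems $(X_{i},\B_{i},\mu_{i},T_{i})$ and sets $A_{i}$ for the $E_{i}$, forms $X=X_{1}\times\ld\times X_{s}$, $T=T_{1}\times\ld\times T_{s}$, $A=A_{1}\times\ld\times A_{s}$ (so $\mu(A)=\prod_{i}\mu_{i}(A_{i})>0$), and observes that $\mu\bigl(A\cap T^{-p_{1}(n)}A\cap\ld\cap T^{-p_{r}(n)}A\bigr)=\prod_{i=1}^{s}\mu_{i}\bigl(A_{i}\cap T_{i}^{-p_{1}(n)}A_{i}\cap\ld\cap T_{i}^{-p_{r}(n)}A_{i}\bigr)$. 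Theorem~\ref{ThD} then yields one $\eps>0$ and one syndetic set of $n$ on which this product exceeds $\eps$, hence on which every factor exceeds $\eps$ simultaneously, and the correspondence inequalities place that single syndetic set inside $S$. Incidentally, your joint symbolic correspondence construction in the first paragraph is harmless but unnecessary (the standard one-set principle invoked $s$ times suffices), and even with a common system you would still need the product trick, e.g.\ applying Theorem~\ref{ThD} in $(X^{s},T\times\ld\times T)$ to the set $A_{1}\times\ld\times A_{s}$, to obtain the simultaneity your intersection step was meant to provide.
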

\begin{proof}
(Cf.\nasp\ the proof of Theorem~0.4 in \cite{BM1}.)
Using Furstenberg's correspondence principle,
for each $i=1,\ld,s$
find a probability measure preserving system $(X_{i},\B_{i},\mu_{i},T_{i})$
and a set $A_{i}\in\B_{i}$ with $\mu(A_{i})=d^{*}(E_{i})$
such that for any $n_{1},\ld,n_{l}\in\Z$ one has
$d^{*}\bigl(E_{i}\cap(E_{i}-n_{1})\cap\ld\cap(E_{i}-n_{l})\bigr)
\geq\mu_{i}\bigl(A_{i}\cap T_{i}^{-n_{1}}A_{i}\cap\ld
\cap T_{i}^{-n_{l}}A_{i}\bigr)$.
Put $X=X_{1}\times\ld\times X_{s}$, $T=T_{1}\times\ld\times T_{s}$,
and $A=A_{1}\times\ld\times A_{s}$.
By Theorem~\ref{ThD},
there exists $\eps>0$ such that the set
\begin{multline*}
\Bigl\{n\in\Z^{m}:
\mu\bigl(A\cap T^{-p_{1}(n)}A\cap\ld\cap T^{-p_{r}(n)}A\bigr)>\eps\Bigr\}
\\
=\Bigl\{n\in\Z^{m}:\prod_{i=1}^{s}
\mu_{i}\bigl(A_{i}\cap T^{-p_{1}(n)}A_{i}\cap\ld\cap T^{-p_{r}(n)}A_{i}\bigr)>\eps\Bigr\}
\end{multline*}
is syndetic,
and this is a subset of
$$
\bigcap_{i=1}^{s}\Bigl\{n\in\Z^{m}:
\mu_{i}\bigl(A_{i}\cap T^{-p_{1}(n)}A_{i}\cap\ld\cap T^{-p_{r}(n)}A_{i}\bigr)>\eps\Bigr\}.
$$
\end{proof}
We now confine ourselves to the one-parameter situation.
A subset $E$ of $\Z$ is said to be {\it piecewise syndetic\/}
if there exists a sequence of intervals $J_{1},J_{2},\ld$
with $|J_{j}|\ra\infty$ and a syndetic set $E'\sle\Z$
such that $E=E'\cap\bigcup_{j=1}^{\infty}J_{j}$.
It is not hard to see that
if a syndetic set is partitioned into finitely many subsets,
then one of these subsets is piecewise syndetic.
\begin{theorem}
\label{ThnvdW}
Let $p_{1},\ld,p_{r}$ be jointly intersective integral polynomials.
For any finite partition of\/ $\Z$, $\Z=\bigcup_{i=1}^{k}E_{i}$,
one of $E_{i}$ has the property that, for some $\eps>0$, the set
$$
\Bigl\{n\in E_{i}: 
d^{*}\bigl(E_{i}\cap(E_{i}-p_{1}(n))\cap\ld\cap(E_{i}-p_{r}(n))\bigr)>\eps
\Bigr\}
$$
is piecewise syndetic.
\end{theorem}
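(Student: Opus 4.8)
The plan is to deduce the statement from Proposition~\ref{P-cvdW} by the covering argument of \cite{BM1}, together with the two elementary facts about piecewise syndetic sets recalled just before the statement: a partition of a syndetic set into finitely many pieces always contains a piecewise syndetic piece, and every piecewise syndetic set has positive upper Banach density. I will also use that a superset of a piecewise syndetic set is piecewise syndetic; this is immediate from the standard characterization that $E$ is piecewise syndetic iff $E+F$ contains arbitrarily long intervals for some finite $F\sln\Z$, since then $E''+F\sge E+F$ whenever $E''\sge E$.

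First I would discard the ``thin'' cells: put $F=\{i\in\{1,\ld,k\}:d^{*}(E_{i})>0\}$. Since $d^{*}$ is subadditive and $d^{*}(\Z)=1$, we have $\sum_{i=1}^{k}d^{*}(E_{i})\geq1$, so $F\neq\emptyset$. Applying Proposition~\ref{P-cvdW} (in the one-parameter case $m=1$) to the finite family $\{E_{i}:i\in F\}$ produces an $\eps>0$ such that the set
$$
S=\bigcap_{i\in F}\Bigl\{n\in\Z:d^{*}\bigl(E_{i}\cap(E_{i}-p_{1}(n))\cap\ld\cap(E_{i}-p_{r}(n))\bigr)>\eps\Bigr\}
$$
is syndetic.

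Next I would apply the pigeonhole to the partition $S=\bigcup_{i=1}^{k}(S\cap E_{i})$ of the syndetic set $S$: some cell $S\cap E_{i_{0}}$ is piecewise syndetic. Being piecewise syndetic, $S\cap E_{i_{0}}$ has positive upper Banach density, hence $d^{*}(E_{i_{0}})>0$, i.e.\ $i_{0}\in F$. Therefore the defining property of $S$ applies to the index $i_{0}$, and for every $n\in S\cap E_{i_{0}}\sle S$ we get $n\in E_{i_{0}}$ and $d^{*}(E_{i_{0}}\cap(E_{i_{0}}-p_{1}(n))\cap\ld\cap(E_{i_{0}}-p_{r}(n)))>\eps$; that is,
$$
S\cap E_{i_{0}}\sle\Bigl\{n\in E_{i_{0}}:d^{*}\bigl(E_{i_{0}}\cap(E_{i_{0}}-p_{1}(n))\cap\ld\cap(E_{i_{0}}-p_{r}(n))\bigr)>\eps\Bigr\}.
$$
Since a superset of a piecewise syndetic set is piecewise syndetic, the set on the right is piecewise syndetic, which is exactly the assertion of the theorem for $i=i_{0}$.

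I do not expect a genuine obstacle here: the whole argument is a soft pigeonholing on top of Proposition~\ref{P-cvdW}. The one point deserving attention is the synchronization at the end --- one must make sure that the cell $E_{i_{0}}$ singled out by the pigeonhole is among the cells to which Proposition~\ref{P-cvdW} was applied, i.e.\ that $i_{0}\in F$; this is precisely why one first restricts to the cells of positive density and then invokes that piecewise syndeticity forces positive density. Everything else (subadditivity of $d^{*}$, stability of piecewise syndeticity under supersets, and the ``one piece of a partition of a syndetic set is piecewise syndetic'' lemma) is standard and can be used without proof.
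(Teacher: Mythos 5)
Your proposal is correct and takes essentially the same route as the paper: apply Proposition~\ref{P-cvdW} to the cells of positive upper Banach density and pigeonhole the resulting syndetic set $S$ over the partition, using that one cell of a partition of a syndetic set is piecewise syndetic and that the relevant set contains $S\cap E_{i_{0}}$. The only (immaterial) bookkeeping difference is that the paper first intersects $S$ with the union of the positive-density cells, noting this stays syndetic because a set of zero upper Banach density was removed, and pigeonholes over those cells only, whereas you pigeonhole over all cells and then observe that a piecewise syndetic cell necessarily has positive upper Banach density, so it is among the cells to which Proposition~\ref{P-cvdW} was applied.
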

\begin{remark}
As it was already mentioned above,
the fact that for some $E_{i}$
(and indeed for any $E_{i}$ that has positive upper density)
and some $\eps>0$ the set
$$
\Bigl\{n\in\Z: 
d^{*}\bigl(E_{i}\cap(E_{i}-p_{1}(n))\cap\ld\cap(E_{i}-p_{r}(n))\bigr)>\eps
\Bigr\}
$$
is syndetic is a direct corollary of Theorem~\ref{P-Sz}.
The delicate point in Theorem~\ref{ThnvdW} 
is that the set of $n$ satisfying the assertion of the theorem
is a (large) subset of $E_{i}$.
\end{remark}
\begin{proof}
Re-index $E_{1},\ld,E_{k}$ so that $d^{*}(E_{i})>0$ for $i=1,\ld,s$
and $d^{*}(E_{i})=0$ for $i=s+1,\ld,k$.
Choose $\eps$ as in Proposition~\ref{P-cvdW},
and let $S$ be the syndetic set defined by (\ref{f-S}).
Since the set $\Z\sm\bigcup_{i=1}^{s}E_{i}$ has zero upper Banach density,
the set $S\cap\bigcup_{i=1}^{s}E_{i}$ is also syndetic,
and thus $S\cap E_{i}$ is piecewise syndetic for some $i\in\{1,\ld,s\}$.
\end{proof}
\section{Concluding remarks}
\label{S-Misc}
\subsection{Intersective and jointly intersective polynomials}
\label{s-OneVar}

While every integral polynomial with an integer root is clearly intersective,
there are also examples of intersective polynomials without rational roots.
For example, one can show that if $a_{1}$, $a_{2}$
are distinct prime integers such that $a_{1}\equiv a_{2}\equiv 1\pmod4$
and $a_{1}$ is a square in $\Z/(a_{2}\Z)$,
then the polynomial $p(n)=(n^{2}-a_{1})(n^{2}-a_{2})(n^{2}-a_{1}a_{2})$
is intersective.
(Such is, for example, the polynomial $p(n)=(n^{2}-5)(n^{2}-41)(n^{2}-205)$.)
There are also similar examples of intersective polynomials of degree 5
(for instance, $p(n)=(n^{3}-19)(n^{2}+n+1)$),
and one can show (see \cite{BB})
that there exist no intersective polynomials in one variable of degree less than 5
without rational roots.
A curious example of an intersective polynomial of several variables
with no rational roots
is $p(n_{1},\ld,n_{4})=n_{1}^{2}+\ld+n_{4}^{2}+b$,
where $b$ is an arbitrary positive integer;
this polynomial has the property that all its shifts $p+c$, $c\in\Z$,
are also intersective.
(No intersective polynomials in one variable, 
except the polynomials $\pm n+b$, $b\in\Z$,
have this property.
Indeed, if an integral polynomial $p(n)$ is not of the form $\pm n+b$,
then there exists $n_{0}\in\Z$ such that $k=|p(n_{0}+1)-p(n_{0})|\neq 1$.
Then $p$ is not one-to-one in $\Z/(k\Z)$,
so is not onto,
and thus there exists $d\in\Z$ such that $p(n)-d\neq 0\mod k$ for any $n\in\Z$.)

\medbreak
Systems of jointly intersective polynomials in one variable
can be easily described:
\begin{proposition}
\label{P-comdiv}
Integral polynomials $p_{1},\ld,p_{r}$ of one variable are jointly intersective
iff they all are multiples of an intersective polynomial $p$.
\end{proposition}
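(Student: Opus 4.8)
The plan is to prove both directions. The ``if'' direction is straightforward: if $p$ is intersective and each $p_{i}=q_{i}p$ with $q_{i}\in\Q[n]$, then for any $k\in\N$ there is $n$ with $p(n)\dvd k$. We cannot immediately conclude $p_{i}(n)\dvd k$ since $q_{i}$ need not be integral; but we can replace $k$ by $k!$ (or by $dk$ where $d$ clears denominators of all $q_{i}$). Concretely, pick $d\in\N$ with $dq_{1},\ld,dq_{r}\in\Z[n]$, and choose $n$ with $p(n)\dvd dk$; then $dp_{i}(n)=(dq_{i}(n))p(n)$, and since $dq_{i}(n)\in\Z$ we get $dp_{i}(n)\dvd dk\cdot(\text{something})$—one needs to be slightly careful here. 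The clean statement: $p(n)\dvd dk$ means $p(n)=dk\cdot m$ for some integer $m$; then $p_{i}(n)=q_{i}(n)p(n)$, so $dp_{i}(n)=(dq_{i}(n))\cdot dk\cdot m/d$... Let me instead phrase it as: $p_{i}(n)=q_{i}(n)p(n)$ and $p(n)$ is a multiple of $dk$; writing $q_{i}(n)=a_{i}/d$ with $a_{i}\in\Z$, we get $p_{i}(n)=a_{i}p(n)/d$, which is a multiple of $a_{i}k$, hence of $k$ once we also note it is an integer (which it is, $p_{i}$ being integral). So $p_{1}(n),\ld,p_{r}(n)\dvd k$, proving joint intersectivity.

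For the ``only if'' direction, suppose $p_{1},\ld,p_{r}$ are jointly intersective. The natural candidate for the common divisor is $p=\gcd(p_{1},\ld,p_{r})$ computed in $\Q[n]$, which (after normalizing) we may take to be an integral polynomial; say $p_{i}=q_{i}p$ with $q_{i}\in\Q[n]$. Since $\Q[n]$ is a PID, there exist $u_{1},\ld,u_{r}\in\Q[n]$ with $\sum u_{i}q_{i}=1$, hence $\sum u_{i}p_{i}=p$. Clearing denominators, there is $D\in\N$ with $\sum(Du_{i})p_{i}=Dp$ and $Du_{i}\in\Z[n]$. Now given $k\in\N$, apply joint intersectivity at the modulus $Dk$: there is $n$ with $p_{i}(n)\dvd Dk$ for all $i$. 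Then $Dp(n)=\sum(Du_{i}(n))p_{i}(n)$ is a $\Z$-linear combination of multiples of $Dk$, hence $Dp(n)\dvd Dk$, so $p(n)\dvd k$. Thus $p$ is intersective, and each $p_{i}$ is a multiple of $p$, as desired.

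The main (and only slightly subtle) obstacle is the bookkeeping of denominators—ensuring that the Bézout-type relation in $\Q[n]$ can be used at the level of divisibility of integer values. The fix in both directions is the same: absorb all denominators into the modulus by passing from $k$ to $Dk$ (or to $k!$, exploiting that joint intersectivity at a multiple of $k$ implies it at $k$). I would also remark that the gcd $p$ is well-defined up to a rational scalar, so one should fix the normalization (e.g.\ $p$ primitive with positive leading coefficient, or simply ``an integral polynomial dividing all $p_{i}$ with $p_{i}/p$ having no common factor'') to make the statement ``multiples of $p$'' unambiguous; none of this affects intersectivity, which is a property invariant under multiplication by a nonzero rational constant once we stay within integral polynomials.
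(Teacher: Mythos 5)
Your proposal is correct and follows essentially the same route as the paper: take the gcd $p$ of $p_{1},\ld,p_{r}$ in $\Q[n]$, use a B\'{e}zout relation $\sum u_{i}p_{i}=p$, clear denominators, and pass from the modulus $k$ to a suitable integer multiple of $k$ to deduce intersectivity of $p$ (and, conversely, joint intersectivity of the $p_{i}$). The only difference is that you spell out the denominator bookkeeping that the paper dismisses as ``clear,'' which is fine.
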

\noindent
(We say that a polynomial $q$ is a multiple of a polynomial $p$
if $q$ is divisible by $p$ in the ring $\Q[n]$.)
\begin{proof}
Clearly, if $p\in\Q[n]$ is an intersective polynomial and $p_{1},\ld,p_{r}\dvd p$
then $p_{1},\ld,p_{r}$ are jointly intersective.

Let $p_{1},\ld,p_{r}\in\Q[n]$ be jointly intersective.
Let $p\in\Z[n]$ be the greatest common divisor of $p_{1},\ld,p_{r}$ in $\Q[n]$.
Then there exist $h_{1},\ld,h_{r}\in\Q[n]$
such that $\sum_{i=1}^{r}h_{i}p_{i}=p$.
Multiplying both parts by an integer $d$ if necessary,
we may assume that $h_{1},\ld,h_{r}$ have integer coefficients,
and that $\sum_{i=1}^{r}h_{i}p_{i}=dp$.
It is then clear that if $p_{1},\ld,p_{r}$ are jointly intersective, 
then $dp$ is intersective, and thus $p$ is intersective.
\end{proof}

The natural conjecture that integral polynomials are jointly intersective
if any linear combination of these polynomials is intersective, 
fails to be true.
For example, one can show that the polynomials 
$p_{1}(n)=n(n+1)(2n+1)$ and $p_{2}(n)=(n^{3}+n^{2}+2)(2n+1)$
satisfy the above condition,
but are not jointly intersective
(see Appendix in \cite{BeLe}).

Proposition~\ref{P-comdiv} is no longer true
for jointly intersective polynomials of several variables.
If polynomials $p_{1},\ld,p_{r}$ in $m$ variables
are jointly intersective,
then the whole ideal $I$ in $\Q[n_{1},\ld,n_{m}]$ generated by these polynomials
consists of jointly intersective polynomials.
In the case $m=1$, $I$ is principal,
from which Proposition~\ref{P-comdiv} follows.
If $m\geq 2$, $\Q[n_{1},\ld,n_{m}]$ is not a principal ideal domain,
and Proposition~\ref{P-comdiv} fails.
(Consider, for example, the pair of jointly intersective polynomials
$p_{i}(n_{1},n_{2})=n_{i}$, $i=1,2$.)
\subsection{Total ergodicity}

If one deals with totally ergodic dynamical systems
(this means that $T^{k}$ is ergodic for any nonzero integer $k$),
it is not hard to verify
(see Proposition~\ref{P-toterg} below)
that any integral polynomial is ``good'' for single recurrence.
This is no longer true for multiple recurrence,
as the simple example following Proposition~\ref{P-toterg} shows.

\begin{proposition}
\label{P-toterg}
Let $(X,\B,\mu,T)$ be a totally ergodic 
probability measure preserving dynamical system 
and let $p$ be an integral polynomial on $\Z^{m}$. 
Then, for any set $A\in\B$,
$\Clim_{n}\mu\bigl(A\cap T^{-p(n)}A\bigr)=\mu(A)^2$.
\end{proposition}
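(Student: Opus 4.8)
The plan is to reduce the statement to a spectral computation for the Koopman operator of $T$. Let $f=1_{A}$ and let $U$ be the unitary operator $Uh=h\comp T$ on $L^{2}(\mu)$, so that $\mu\bigl(A\cap T^{-p(n)}A\bigr)=\langle f,U^{p(n)}f\rangle$. By the spectral theorem there is a finite positive Borel measure $\sigma_{f}$ on $[0,1)$, the spectral measure of $f$, with total mass $\sigma_{f}([0,1))=\|f\|_{2}^{2}=\mu(A)$, such that $\langle f,U^{k}f\rangle=\int_{[0,1)}e^{2\pi ikt}\,d\sigma_{f}(t)$ for all $k\in\Z$; hence
$$
\mu\bigl(A\cap T^{-p(n)}A\bigr)=\int_{[0,1)}e^{2\pi ip(n)t}\,d\sigma_{f}(t),\qquad n\in\Z^{m}.
$$
Everything then comes down to understanding $\Clim_{n}e^{2\pi ip(n)t}$ for fixed $t$, together with an interchange of $\Clim_{n}$ with the integral $\int d\sigma_{f}(t)$.

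I would treat rational and irrational $t$ separately. If $t=a/q\in\Q$, then, $p$ being integral, $n\mapsto p(n)\bmod q$ is a periodic function on $\Z^{m}$ (writing $p=p_{0}/d$ with $p_{0}$ an integer-coefficient polynomial and $d\in\N$, one has $p(n+qd\,e_{j})\equiv p(n)\pmod q$ for each standard basis vector $e_{j}$); hence $n\mapsto e^{2\pi ip(n)t}$ is periodic, so $\Clim_{n}e^{2\pi ip(n)t}$ exists and equals the mean value $c(t):=q^{-m}\sum_{n\in(\Z/q\Z)^{m}}e^{2\pi ip(n)a/q}$, with $c(0)=1$. If $t$ is irrational, then (taking $p$ non-constant, as the statement requires) the sequence $\bigl(p(n)t\bigr)_{n\in\Z^{m}}$ is \emph{well-distributed} modulo $1$, that is, equidistributed with respect to every F{\o}lner sequence in $\Z^{m}$, by Weyl's equidistribution theorem, so $\Clim_{n}e^{2\pi ip(n)t}=0$. (Alternatively one may quote from \cite{cpm} that $\Clim_{n}\mu\bigl(A\cap T^{-p(n)}A\bigr)$ exists and then evaluate it along the boxes $\Phi_{N}=[0,N)^{m}$, where the classical Weyl theorem suffices.) Since all these quantities are bounded by $1$ in absolute value, the bounded convergence theorem applies to the averages over any F{\o}lner sequence and gives
$$
\Clim_{n}\mu\bigl(A\cap T^{-p(n)}A\bigr)=\int_{[0,1)}\bigl(\Clim_{n}e^{2\pi ip(n)t}\bigr)\,d\sigma_{f}(t)=\sum_{t\in\Q\cap[0,1)}c(t)\,\sigma_{f}(\{t\}),
$$
the last sum running over the countably many rational atoms of $\sigma_{f}$ (the irrational part contributing $0$).

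It remains to identify those atoms, and here total ergodicity enters. If $e^{2\pi it}$ with $t=a/q\in\Q$ were an eigenvalue of $U$ with eigenfunction $h\ne0$, then $U^{q}h=h$, so $h$ is $T^{q}$-invariant, hence constant by ergodicity of $T^{q}$, which forces $e^{2\pi it}=1$, that is, $t=0$. Therefore $\sigma_{f}(\{t\})=\|E(\{t\})f\|_{2}^{2}=0$ for every rational $t\ne0$, where $E$ denotes the projection-valued measure of $U$, while $\sigma_{f}(\{0\})=\|E(\{0\})f\|_{2}^{2}=\|\langle f,1\rangle\,1\|_{2}^{2}=\mu(A)^{2}$, since by ergodicity of $T$ the projection $E(\{0\})$ onto the eigenvalue $1$ is the projection onto the constant functions. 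Substituting this and $c(0)=1$ into the displayed identity yields $\Clim_{n}\mu\bigl(A\cap T^{-p(n)}A\bigr)=\mu(A)^{2}$.

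The step I expect to be the main obstacle is the irrational case: one needs $\Clim_{n}e^{2\pi ip(n)t}=0$ for every irrational $t$, i.e.\ the \emph{well-distribution} (uniform Weyl equidistribution) of polynomial orbits on the circle, rather than merely their equidistribution — the latter being already needed for $\Clim$ to be defined at all. This is classical; for $m=1$ it can also be extracted from the description of polynomial orbits on tori in Section~\ref{S-Tor}. The remaining ingredients — the spectral theorem, the characterization of total ergodicity through the absence of nontrivial root-of-unity eigenvalues, and the bounded-convergence interchange — are routine.
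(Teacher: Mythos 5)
Your proof is correct and follows essentially the same route as the paper's: the spectral theorem for the Koopman operator, Weyl's theorem on the (well-)distribution of polynomial sequences to kill the irrational part of the spectral measure, and the characterization of total ergodicity as the absence of nontrivial rational point spectrum to kill the rational atoms. The paper merely sketches this argument with a reference to Furstenberg's book; you have supplied the details, including the correctly flagged need for well-distribution (not just equidistribution) because of the uniform Ces\`aro limit.
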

\begin{proof}
Total ergodicity of $T$ is equivalent 
to the lack of discrete rational spectrum 
for the unitary operator $f\mapsto f\comp T$ on $L^{2}(X)$.
For any $f\in L^{2}(X)$ 
and any F{\o}lner sequence $(\Phi_{N})_{N=1}^{\infty}$ in $\Z^{m}$, 
the convergence in $L^{2}$ of the sequence 
$\bigl(\frac{1}{|\Phi_{N}|}\sum_{n\in\Phi_{N}}f\comp T^{p(n)}\bigr)_{N=1}^{\infty}$ 
to the limit $\int f\,d\mu$ 
is then a consequence of basic spectral theory
and Weyl's equidistribution theorem.
(Cf.\nasp~\cite{Fb}, p.\nasp~70-71.)
\end{proof}

\begin{example}
An example of a totally ergodic 
probability measure preserving dynamical system 
is the rotation of the one dimensional torus by an irrational number $\alf$. 
The simplest example of a non-intersective polynomial is $2n+1$. 
If we choose $A$ to be a sufficiently small interval on the torus, 
then, for any $n\neq 0$, 
we will have $A\cap T^{-n}A\cap T^{-(2n+1)}A=\emptyset$.
\end{example}

It is natural to ask what is a necessary and sufficient condition
for a family $P=\{p_{1},\ld,p_{r}\}$ of integral polynomials 
to have ``the multiple recurrence property'' 
(namely, that for any $A\sle X$ with $\mu(A)>0$ 
one has $\mu(A\cap T^{-p_{1}(n)}A\cap\ld\cap T^{-p_{r}(n)}A)>0$ for a certain $n$)
in the framework of totally ergodic dynamical systems.
We conjecture that the condition 
that the ring generated by $p_{1},\ld,p_{r}$ does not contains nonzero constants
is a sufficient one.
However, this condition is far from being necessary;
for example, if the polynomials $p_{1},\ld,p_{r}$ are linearly independent,
it suffices that $\Zspan\{p_{1},\ld,p_{r}\}$ does not contain nonzero constants.
In order to find a necessary and sufficient condition 
for a family $P=\{p_{1},\ld,p_{r}\}$ of polynomials 
to have the multiple recurrence property under the assumption of total ergodicity
one has to take into consideration 
{\it the complexity\/} of the family $\{p_{1},\ld,p_{r}\}$
(see \cite{BLL1} and \cite{pod}).
Such a condition, however, would be too cumbersome
to be either of practical or aesthetic value.
\subsection{Multidimensional conjecture}

The {\it multidimensional\/} polynomial Szemer\'{e}di theorem
states that given a set $E$ of positive upper Banach density in $\Z^{k}$
and vector-valued polynomials $p_{1},\ld,p_{r}\col\Z^{m}\ra\Z^{k}$ 
with zero constant term,
the set 
$$
N_{P}(E)=\Bigl\{n\in\Z^{m}:\hbox{for some $a\in\Z^{k}$, 
$\{a,a+p_{1}(n),\ld,a+p_{r}(n)\}\sln E$}\Bigr\}
$$
is infinite, and, moreover, syndetic. 
(See \cite{psz} and \cite{BM2}.)
It is natural to try to generalize Theorem~\ref{Th} to this multidimensional situation.
Let us say that  a family $\{p_{1},\ld,p_{r}\}$ 
of polynomial mappings $\Z^{m}\ra\Z^{k}$ {\it has SPSZ property\/}
if for any set $E$ of positive upper Banach density in $\Z^{k}$
the set $N_{P}(E)$ is syndetic in $\Z^{m}$;
let us say that $p_{1},\ld,p_{r}$ are {\it jointly intersective\/}
if for any subgroup $\Lam$ of finite index in $\Z^{k}$
there exists $n\in\Z^{m}$ such that $p_{1}(n),\ld,p_{r}(n)\in\Lam$.
\begin{conjecture}
A set $\{p_{1},\ld,p_{r}\}$ of polynomial mappings $\Z^{m}\ra\Z^{k}$ has SPSZ property
iff the mappings $p_{1},\ld,p_{r}$ are jointly intersective.
\end{conjecture}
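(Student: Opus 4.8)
The statement is the multidimensional analogue of Theorem~\ref{Th}, and it is phrased as a conjecture precisely because one of the ingredients needed to run the proof of Theorem~\ref{Th} in the $\Z^{k}$-action setting is not yet available in the literature in the required form; modulo that ingredient, the plan is to imitate Sections~\ref{S-Orb}--\ref{S-Sz} line by line. The implication ``SPSZ $\Rightarrow$ jointly intersective'' is routine: if for some fixed finite-index subgroup $\Lam$ of $\Z^{k}$ there is no $n\in\Z^{m}$ with $p_{1}(n),\ld,p_{r}(n)\in\Lam$ simultaneously, then $E=\Lam$ has positive upper Banach density, yet a configuration $\{a,a+p_{1}(n),\ld,a+p_{r}(n)\}\sle E$ would force $p_{i}(n)=(a+p_{i}(n))-a\in\Lam$ for all $i$, which is impossible; hence $N_{P}(E)=\emptyset$ and SPSZ fails. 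Conversely, joint intersectivity of the mappings $p_{i}\col\Z^{m}\ra\Z^{k}$ is equivalent to joint intersectivity, in the scalar sense of this paper, of the $rk$ integral polynomials $p_{i,j}$ ($1\le i\le r$, $1\le j\le k$), where $p_{i,j}$ is the $j$-th coordinate of $p_{i}$: every finite-index subgroup of $\Z^{k}$ contains $l\Z^{k}$ for $l$ its index, and $p_{i}(n)\in l\Z^{k}$ for all $i$ is the same as $l$ dividing $p_{i,j}(n)$ for all $i,j$.

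For the converse, apply the Furstenberg correspondence principle for $\Z^{k}$-actions: a set $E\sle\Z^{k}$ with $d^{*}(E)>0$ gives commuting invertible measure preserving transformations $T_{1},\ld,T_{k}$ of a probability space $(X,\B,\mu)$ and a set $A\in\B$ with $\mu(A)=d^{*}(E)$ such that, with $T^{v}=T_{1}^{v_{1}}\cdots T_{k}^{v_{k}}$,
$$
d^{*}\bigl(E\cap(E-p_{1}(n))\cap\ld\cap(E-p_{r}(n))\bigr)\ge\mu\bigl(A\cap T^{-p_{1}(n)}A\cap\ld\cap T^{-p_{r}(n)}A\bigr)
$$
for all $n\in\Z^{m}$. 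As in the passage from Theorem~\ref{ThD} to Theorem~\ref{P-Sz} in Section~\ref{S-Sz}, it then suffices to prove the ergodic statement $\Clim_{n}\mu\bigl(A\cap T^{-p_{1}(n)}A\cap\ld\cap T^{-p_{r}(n)}A\bigr)>0$ whenever $\mu(A)>0$: positivity of the Ces\`{a}ro averages forces the set $\{n\in\Z^{m}:d^{*}(E\cap(E-p_{1}(n))\cap\ld\cap(E-p_{r}(n)))>\eps\}$ to have positive lower Banach density in $\Z^{m}$, hence to be syndetic, and this set is contained in $N_{P}(E)$.

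To prove the ergodic statement I would follow Section~\ref{S-Orb}. Using the Host--Kra theory of characteristic factors for $\Z^{k}$-actions together with the known convergence results for polynomial multicorrelations of commuting transformations, one reduces to the case where $(X,T_{1},\ld,T_{k})$ is a $\Z^{k}$-nilsystem, i.e.\ $X=G/\Gam$ with $G$ nilpotent and $T_{j}$ the translation by an element $b_{j}\in G$, the $b_{j}$ pairwise commuting in $G$. Then $T^{p_{i}(n)}$ acts on $X$ as left translation by $c_{i}(n):=b_{1}^{p_{i,1}(n)}\cdots b_{k}^{p_{i,k}(n)}$, which is a polynomial sequence in $G$. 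Passing to the nilmanifold $X^{r+1}$ and its diagonal $\Del_{X^{r+1}}$, and setting $g(n)=\bigl(1_{G},c_{1}(n),\ld,c_{r}(n)\bigr)=\prod_{i,j}(e_{i,j})^{p_{i,j}(n)}$, where $e_{i,j}\in G^{r+1}$ equals $b_{j}$ in coordinate $i$ and $1_{G}$ elsewhere (so the $e_{i,j}$ commute), we see that $g$ is a $P$-polynomial sequence in $G^{r+1}$ for the ring $P$ generated by the jointly intersective polynomials $\{p_{i,j}\}$ — after relabeling the $rk$ pairs, it is exactly of the form treated in Proposition~\ref{P-DinY}. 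That proposition then gives $\corb_{g}(\bx)\ni\bx$ for every $\bx\in\Del_{X^{r+1}}$, and Lemma~\ref{geom} together with the fact that all marginals of the limiting measure $\mu'_{Y}$ on $Y=\corb_{g}(\Del_{X^{r+1}})$ equal $\mu$ yields $\Clim_{n}\mu\bigl(A\cap T^{-p_{1}(n)}A\cap\ld\cap T^{-p_{r}(n)}A\bigr)=\mu'_{Y}(A^{r+1}\cap Y)>0$, completing the argument.

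The main obstacle is the clause ``reduces to the case of a $\Z^{k}$-nilsystem.'' What is needed is (a) the existence and nilpotent structure of the Host--Kra--Ziegler factors for $\Z^{k}$-actions, which is available, and (b) that for a polynomial family $\{p_{1},\ld,p_{r}\}$ with \emph{arbitrary}, not necessarily zero, constant term these factors are characteristic for the averages $\Clim_{n}\int_{X}f_{0}\cdot f_{1}\comp T^{p_{1}(n)}\cdots f_{r}\comp T^{p_{r}(n)}\,d\mu$. Exactly as in the $m=k=1$ case treated here, (b) is expected to be genuinely more delicate than the homogeneous polynomial Szemer\'edi theorem and out of reach of the PET/polynomial van der Waerden route; establishing it — presumably by extending the seminorm arguments of \cite{cpm} and \cite{HKp} to commuting transformations and to polynomials with constant terms — is the heart of the conjecture. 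Everything downstream of this reduction, as indicated above, is formally identical to the one-parameter development of the present paper.
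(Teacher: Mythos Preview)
This statement is a \emph{conjecture} in the paper, not a theorem, and the paper offers no proof; immediately after stating it the authors write: ``At this stage, we are unable to check this conjecture by methods developed above because of lack of theory of characteristic factors for $\Z^{k}$-actions, similar to that established in \cite{HKo} and \cite{Zo} for $\Z$-actions.'' Your proposal is not a proof either but a diagnosis, and it agrees with the paper's own assessment: you dispose of the easy implication (SPSZ $\Rightarrow$ jointly intersective) by the straightforward extension of the argument in Section~\ref{S-Intro}, you outline how Sections~\ref{S-Orb}--\ref{S-Sz} would run once one can reduce to $\Z^{k}$-nilsystems, and you correctly isolate that reduction --- the characteristic-factor step --- as the missing piece.

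One small overstatement: you say that item (a), the nilpotent structure of Host--Kra--Ziegler factors for $\Z^{k}$-actions, ``is available,'' and that only (b) is missing. The paper draws no such line; it treats the entire characteristic-factor machinery for commuting transformations as the obstacle. At the time the paper was written neither (a) nor (b) existed in usable form, and even with later developments the precise statement one needs (that a pro-nilfactor is characteristic for polynomial averages of a $\Z^{k}$-action with arbitrary constant terms) is not something you can cite off the shelf. Also note a technical wrinkle in your nilsystem sketch: for a $\Z^{k}$-nilsystem the translating elements $b_{1},\ld,b_{k}$ commute as transformations of $G/\Gam$, but need not commute in $G$ itself, so writing $T^{p_{i}(n)}$ as $b_{1}^{p_{i,1}(n)}\cdots b_{k}^{p_{i,k}(n)}$ and asserting the $e_{i,j}$ commute requires a bit more care than you indicate.
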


At this stage,
we are unable to check this conjecture by methods developed above
because of lack of theory of characteristic factors for $\Z^{k}$-actions,
similar to that established in \cite{HKo} and \cite{Zo} for $\Z$-actions.

\hbox to \hsize{\small\hfil
\vtop{\hsize=5cm
V. Bergelson\\
Department of Mathematics\\
The Ohio State University\\
Columbus, OH 43210, USA\\
{\it vitaly@math.ohio-state.edu}}
\hfil
\vtop{\hsize=5cm
A. Leibman\\
Department of Mathematics\\
The Ohio State University\\
Columbus, OH 43210, USA\\
{\it leibman@math.ohio-state.edu}}
\hfil}
\kern8mm
\hbox to \hsize{\small\hfil
\vtop{\hsize=8cm
E. Lesigne\\
Laboratoire de Math\'{e}matiques et Physique Th\'{e}orique\\
Universit\'{e} Francois-Rabelais Tours\\
F\'{e}d\'{e}ration Denis Poisson -- CNRS\\
Parc de Grandmont, 37200 Tours, France\\
{\it Emmanuel.Lesigne@lmpt.univ-tours.fr}}
\hfil}
\end{document}